\numberwithin{equation}{section}
\newtheorem{Theorem}{Theorem}[section]
\newtheorem{Lemma}[Theorem]{Lemma}
\newtheorem{Proposition}[Theorem]{Proposition}
{\theoremstyle{definition}
\newtheorem{Remark}[Theorem]{Remark}
}
\begin{document}
\allowdisplaybreaks

\renewcommand{\thefootnote}{$\star$}

\newcommand{\arXivNumber}{1504.00715}

\renewcommand{\PaperNumber}{025}

\FirstPageHeading

\ShortArticleName{Loops in SU(2), Riemann Surfaces, and Factorization,~I}

\ArticleName{Loops in SU(2), Riemann Surfaces,\\ and Factorization,~I\footnote{This paper is a~contribution to the Special Issue
on Asymptotics and Universality in Random Matrices, Random Growth Processes, Integrable Systems and Statistical Physics in honor of Percy Deift and Craig Tracy.
The full collection is available at \href{http://www.emis.de/journals/SIGMA/Deift-Tracy.html}{http://www.emis.de/journals/SIGMA/Deift-Tracy.html}}}

\Author{Estelle BASOR~$^\dag$ and Doug PICKRELL~$^\ddag$}
\Address{$^\dag$~American Institute of Mathematics, 600 E.~Brokaw Road, San Jose, CA 95112, USA}
\EmailD{\href{mailto:ebasor@aimath.org}{ebasor@aimath.org}}

\Address{$^\ddag$~Mathematics Department, University of Arizona, Tucson, AZ 85721, USA}
\EmailD{\href{mailto:pickrell@math.arizona.edu}{pickrell@math.arizona.edu}}
\AuthorNameForHeading{E.~Basor and D.~Pickrell}

\ArticleDates{Received October 24, 2015, in f\/inal form March 02, 2016; Published online March 08, 2016}

\Abstract{In previous work we showed that a loop
$g\colon S^1 \to {\rm SU}(2)$ has a triangular factorization if and only if
the loop $g$ has a~root subgroup factorization. In this paper we present generalizations in which
the unit disk and its double, the sphere, are replaced by a~based
compact Riemann surface with boundary, and its double. One ingredient
is the theory of generalized Fourier--Laurent expansions developed by Krichever and Novikov.
We show that a ${\rm SU}(2)$ valued multiloop having an analogue of a root subgroup factorization
sa\-tisf\/ies the condition that the multiloop, viewed as a~transition function, def\/ines
a~semistable holomorphic ${\rm SL}(2,\mathbb C)$ bundle. Additionally, for such a multiloop,
there is a cor\-respon\-ding factorization for
determinants associated to the spin Toeplitz operators def\/ined by the multiloop.}

\Keywords{loop group; factorization; Toeplitz operator; determinant}

\Classification{22E67; 47A68; 47B35}

\renewcommand{\thefootnote}{\arabic{footnote}}
\setcounter{footnote}{0}


\bigskip

This paper is dedicated to Percy Deift and Craig Tracy, both of whom have
contributed to the richness and beauty of mathematics and mathematical physics in
so many ways. Both have created techniques and theories that have enhanced
the understanding of fundamental problems and questions. And both
with their enthusiasm and fearlessness, have encouraged, inspired, and motivated others
to try to do the same.

The authors of this paper met in a summer meeting in 1984 in Laramie, Wyoming, where
Craig was one of the principal speakers. One of the topics discussed informally at
the \mbox{meeting} was determinants of Toeplitz operators, a subject where both Percy
and Craig have made considerable contributions, in particular with the study
of singular symbols and applications to statistical mechanics. This paper
concerns some generalized Toeplitz determinant calculations
for matrix-valued symbols, a topic that is a return to a long ago summer,
yet one which is still of current interest.

\section{Introduction}\label{Introduction}

Suppose that $\Sigma$ is a connected compact Riemann surface with nonempty
boundary $S$ (a disjoint union of circles). Let $\widehat{\Sigma}$
denote the double,
\begin{gather*}\widehat{\Sigma}=\Sigma^* \circ \Sigma,\end{gather*}
where $\Sigma^*$ is the adjoint of $\Sigma$, i.e., the surface $\Sigma$
with the orientation reversed, and the composition is sewing along
the common boundary~$S$. Let~$R$ denote the antiholomorphic
involution (or ref\/lection) f\/ixing~$S$.

The classical example is $\Sigma=D$, the closed unit disk. In this case
$S=S^1$, $\widehat{\Sigma}$ is isomorphic to the Riemann sphere, and (in this
realization)
$R(z)=1/z^*$, where $z^*=\overline{z}$, the complex conjugate.
This example has the exceptional feature that there is a
large automorphism group, ${\rm PSU}(1,1)$, acting by linear
fractional transformations.

We now choose a basepoint, denoted by $(0)$, in the
interior of $\Sigma$, and we let $(\infty)$ denote the ref\/lected basepoint for
$\Sigma^*$. In the classical case, without loss of generality because of the ${\rm PSU}(1,1)$ symmetry,
we can assume the basepoint is $z=0$. Given the data $(\Sigma,(0))$, following ideas
of Krichever and Novikov, a reasonable function on $S$ has a `linear triangular factorization'
\begin{gather}\label{LinearTF}
f=f_-+f_0+f_+,
\end{gather}
where $f_{\pm}$ is holomorphic in the interior of $\Sigma$ ($\Sigma^*$, respectively), with appropriate boundary
behavior, depending on the smoothness of $f$, $f_{+}((0))=0$, $f_{-}((\infty))=0$, and $f_0$ is the restriction
to $S$ of a meromorphic function which
belongs to a $\text{genus}(\widehat{\Sigma})+1$-dimensional complementary subspace, which we refer to
as the vector space of zero modes (see Proposition~\ref{hyperfunction}).
In the classical case~$f_0$ is the zero mode for the Fourier series of~$f$.

A holomorphic map $\mathfrak z\colon \widehat{\Sigma}\to \widehat{D}$ is said to be strictly equivariant
if it satisf\/ies
\begin{gather*}
\mathfrak z(R(q))=\frac1{\mathfrak z(q)^*}
\end{gather*}
and maps $\Sigma$ to $D$ (and hence
$\Sigma^*$ to $D^*$). When we refer to the classical case ($\Sigma=D$),
it will be understood that $\mathfrak z(z)=z$. For a function $f\colon U\subset \widehat{\Sigma}\to \mathcal L\big(\mathbb C^N\big)$, def\/ine
$f^*(q)=f(R(q))^*$, where $(\cdot)^*$ is the Hermitian adjoint. If $f\in H^0(\Sigma)$ (i.e., a holomorphic function in
some open neighborhood of $\Sigma$), then $f^*\in H^0(\Sigma^*)$. If $q\in S$, then $f^*(q)=f(q)^*$, the ordinary
complex conjugate of $f(q)$.

\begin{Theorem}\label{SU(2)theorem1} Suppose that $k_1 \in C^{\infty}(S,{\rm SU}(2))$. Consider the following
three conditions:{\samepage
\begin{enumerate}\itemsep=0pt
\item[{\rm (I.1)}] $k_1$ is of the form
\begin{gather*}k_1(z)=\left(\begin{matrix} a(z)&b(z)\\
-b^*(z)&a^*(z)\end{matrix} \right),\qquad z\in S,\end{gather*} where $a$ and
$b$ are boundary values of holomorphic function in $\Sigma$ with $a((0))>0$,
and $a$ and $b$
do not simultaneously vanish at a point in~$\Sigma$.

\item[{\rm (I.2)}] $k_1$ has a ``root subgroup factorization'' of the form
\begin{gather*}k_1(z)=\lim_{n\to\infty}\mathbf a(\eta_n)\left(\begin{matrix} 1&-\overline{\eta}_n\mathfrak z^n\\
\eta_n\mathfrak z^{-n}&1\end{matrix} \right)\cdots \mathbf
a(\eta_0)\left(\begin{matrix} 1&
-\overline{\eta}_0\\
\eta_0&1\end{matrix} \right),\end{gather*} for some rapidly decreasing sequence
$\{\eta_0,\dots,\eta_n,\dots\}$ of complex numbers, and for some strictly equivariant
function~$\mathfrak z$ with~$\mathfrak z((0))=0$.

\item[{\rm (I.3)}] $k_1$ has a (multiplicative triangular) factorization of the form
\begin{gather*}\left(\begin{matrix} 1&0\\ y^*(z)+y_0(z)&1\end{matrix} \right)\left(\begin{matrix} a_1&0\\
0&a_1^{-1}\end{matrix} \right)\left(\begin{matrix} \alpha_1 (z)&\beta_1 (z)\\
\gamma_1 (z)&\delta_1 (z)\end{matrix} \right),\end{gather*} where $a_1>0$,
the third factor is a ${\rm SL}(2,\mathbb C)$-valued holomorphic function in $\Sigma$ which is unipotent upper
triangular at $0$, $y=y_+$ is holomorphic in $\Sigma$, and $y_0$ is a zero mode $($as in~\eqref{LinearTF}$)$.
\end{enumerate}
Then {\rm (I.2)} implies {\rm (I.1)} and {\rm (I.3)} $($with $y_0=0)$,
and {\rm (I.1)} and {\rm (I.3)} are equivalent.}

Similarly, consider $k_2 \in C^{\infty}(S,{\rm SU}(2))$ and the following statements:{\samepage
\begin{enumerate}\itemsep=0pt
\item[{\rm (II.1)}] $k_2$ is of the form
\begin{gather*}k_2(z)=\left(\begin{matrix} d^{*}(z)&-c^{*}(z)\\
c(z)&d(z)\end{matrix} \right),\qquad z\in S^1,\end{gather*} where $c$ and $d$
are boundary values of holomorphic functions in $\Sigma$, $c((0))=0$, and \mbox{$d((0))>0$},
and $c$ and $d$
do not simultaneously vanish at a point in $\Sigma$.

\item[{\rm (II.2)}] $k_2$ has a ``root subgroup factorization'' of the form
\begin{gather*}k_2(z)=\lim_{n\to\infty}\mathbf a(\zeta_n)\left(\begin{matrix} 1&\zeta_n\mathfrak z^{-n}\\
-\overline{\zeta}_n\mathfrak z^n&1\end{matrix} \right)\cdots \mathbf
a(\zeta_1)\left(\begin{matrix} 1&
\zeta_1\mathfrak z^{-1}\\
-\overline{\zeta}_1\mathfrak z&1\end{matrix} \right),\end{gather*} for some rapidly decreasing sequence
$\{\zeta_1,\dots,\zeta_n,\dots\}$ of complex numbers, and for some strictly equivariant~$\mathfrak z$ with
$\mathfrak z((0))=0$.

\item[{\rm (II.3)}] $k_2$ has $($triangular$)$ factorization of the form
\begin{gather*}\left(\begin{matrix} 1& x^*(z)+x_0(z)\\
0&1\end{matrix} \right)\left(\begin{matrix} a_2&0\\
0&a_2^{-1}\end{matrix} \right)\left(\begin{matrix} \alpha_2 (z)&\beta_2 (z)\\
\gamma_2 (z)&\delta_2 (z)\end{matrix} \right), \end{gather*} where $a_2>0$, the third factor is a
${\rm SL}(2,\mathbb C)$-valued holomorphic function which is unipotent
upper triangular at $(0)$, $x=x_+$ is holomorphic
in $\Sigma$, and $x_0$ is a zero mode $($as in \eqref{LinearTF}$)$.
\end{enumerate}
Then {\rm (II.2)} implies {\rm (II.1)} and {\rm (II.3)} $($with $x_0=0)$, and {\rm (II.1)} and {\rm (II.3)} are equivalent.}
\end{Theorem}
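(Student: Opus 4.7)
The plan is to first establish the algebraic equivalence $(\mathrm{I.1}) \Leftrightarrow (\mathrm{I.3})$ using a holomorphic Bezout identity together with the linear triangular decomposition of Proposition~\ref{hyperfunction}, and then to deduce both $(\mathrm{I.1})$ and $(\mathrm{I.3})$ with $y_0=0$ from $(\mathrm{I.2})$ by an inductive analysis of the partial products and a Gauss (LDU) decomposition of each elementary factor. The statements for $k_2$ will follow by a symmetric argument in which the two columns interchange roles.

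The direction $(\mathrm{I.3}) \Rightarrow (\mathrm{I.1})$ is immediate: multiplying out $LMU$ gives $(k_1)_{11}=a_1\alpha_1$ and $(k_1)_{12}=a_1\beta_1$, both in $H^0(\Sigma)$, with $a((0))=a_1>0$; the $\mathrm{SU}(2)$ condition on $S$ then fixes the second row as the claimed adjoint, and $\det U = \alpha_1\delta_1-\beta_1\gamma_1=1$ forbids simultaneous zeros of $a,b$ in $\Sigma$. For the converse, I would use the no-common-zero hypothesis to invoke a holomorphic Bezout identity on $\Sigma$, producing $\gamma_1^{\circ},\delta_1^{\circ}\in H^0(\Sigma)$ with $\gamma_1^{\circ}((0))=0$, $\delta_1^{\circ}((0))=1$, and $a\delta_1^{\circ}-b\gamma_1^{\circ}=a_1:=a((0))$. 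Together with $\alpha_1 = a/a_1$ and $\beta_1 = b/a_1$ these assemble a candidate $U^{\circ}$ that is $\mathrm{SL}(2,\mathbb C)$-valued holomorphic in $\Sigma$ and unipotent upper triangular at $(0)$; with $M = \mathrm{diag}(a_1,a_1^{-1})$, the factor $L := k_1(MU^{\circ})^{-1}$ is then forced by the $\mathrm{SU}(2)$ relation $|a|^2+|b|^2=1$ on $S$ to be lower triangular unipotent, with $L_{21} = h := -(b^{*}\delta_1^{\circ}+a^{*}\gamma_1^{\circ})/a_1$. To put $h$ in the form $y^{*}+y_0$, I would use the Bezout gauge freedom $(\gamma_1^{\circ},\delta_1^{\circ})\mapsto(\gamma_1^{\circ}+a\phi,\delta_1^{\circ}+b\phi)$ for $\phi\in H^0(\Sigma)$ with $\phi((0))=0$, which modifies $h$ by exactly $-\phi/a_1$ on $S$; decomposing $h = h_{-}+h_{0}+h_{+}$ via Proposition~\ref{hyperfunction} and choosing $\phi = a_1 h_{+}$ kills the $(+)$-part, identifying $y^{*}=h_{-}$ and $y_0 = h_{0}$.

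For $(\mathrm{I.2})\Rightarrow(\mathrm{I.1})$ I would argue by induction on the partial products $P_n = \mathbf a(\eta_n)M_n\cdots \mathbf a(\eta_0)M_0$: the base case is trivial, and the inductive step rests on computing $(P_n)_{11}\propto(P_{n-1})_{11}+\bar\eta_n\mathfrak z^n(P_{n-1})_{12}^{\,*}$ and checking that the apparently antiholomorphic term $\mathfrak z^n(P_{n-1})_{12}^{\,*}$ extends holomorphically to $\Sigma$, because the induction hypothesis controls the KN pole order of $(P_{n-1})_{12}^{\,*}$ at $(0)$. Rapid decrease of $\{\eta_k\}$ then gives $C^{\infty}$ convergence $P_n\to k_1$ and preservation of $(\mathrm{I.1})$ in the limit. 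To obtain also $y_0=0$, I would combine this with the Gauss decomposition, for $k\ge 1$,
\begin{gather*}
\mathbf a(\eta_k)\begin{pmatrix}1 & -\bar\eta_k\mathfrak z^k\\ \eta_k\mathfrak z^{-k} & 1\end{pmatrix}=\begin{pmatrix}1 & 0\\ \eta_k\mathfrak z^{-k} & 1\end{pmatrix}D_k\begin{pmatrix}1 & -\bar\eta_k\mathfrak z^k/(1+|\eta_k|^2)\\ 0 & 1\end{pmatrix},
\end{gather*}
in which the lower factor is unipotent holomorphic in $\Sigma^{*}$ vanishing at $(\infty)$ (purely in the $(-)$ subspace) and the upper factor is unipotent holomorphic in $\Sigma$ vanishing at $(0)$ (purely in the $(+)$ subspace). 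Successive Gauss moves $UL = L'D''U'$ used to collect all lower-triangular factors to the left and all upper-triangular factors to the right generate only further contributions of the same type, so the $(2,1)$ entry of the limiting lower-triangular factor has no KN zero-mode component; the constant produced by the $k=0$ factor is absorbed into $y$.

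The most delicate point is this last step in the higher-genus KN setting, where the zero-mode subspace is genuinely complementary to both $\mathcal H_{\pm}$: I would need to verify that every Gauss move $U_k L_{k'}=L'_{k'}D''U'_k$ used to rearrange the infinite product stays within the span of strictly $(+)$- and $(-)$-modes plus diagonals, and that this property persists under the limit controlled by rapid decay of $\{\eta_k\}$. The arguments for $(\mathrm{II.1})$, $(\mathrm{II.2})$, $(\mathrm{II.3})$ are formally identical after interchanging columns: the holomorphic entries now occupy the second column of $k_2$, the Bezout identity is applied to $(c,d)$ with $c((0))=0$ and $d((0))>0$, and the lower-triangular factor in $(\mathrm{II.3})$ is replaced by the upper-triangular factor with entry $x^{*}+x_0$, all other steps going through without change.
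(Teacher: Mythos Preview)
Your route to the equivalence $(\mathrm{I.1})\Leftrightarrow(\mathrm{I.3})$ is genuinely different from the paper's.  The paper never invokes a Bezout identity; instead it sets up the polarization $\mathcal H_+\oplus\mathcal H_-$, considers the operator $T\colon \mathcal H_-\to\mathcal H_-\oplus\mathcal H_-$, $x_0+x^*\mapsto\bigl((c(x_0+x^*))_-,(d(x_0+x^*))_-\bigr)$, shows it is injective with closed range (as a restriction of the Fredholm block $D(k_2^*)$), and then proves $((d^*)_-,-c^*)\in\ker(T^*)^\perp$ by a residue computation using the Krichever--Novikov differential $dk$.  Your Bezout-plus-gauge argument is more elementary and perfectly valid for Theorem~\ref{SU(2)theorem1}, but you should supply the boundary regularity for the Bezout step: when $a,b$ are only holomorphic in $\Sigma^0$ with $C^\infty$ (or $C^s$) boundary values, solving $a\delta-b\gamma=1$ with the same regularity is a $\bar\partial$-problem on the bordered surface, not a direct consequence of $\Sigma^0$ being Stein.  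The paper's operator-theoretic argument, by contrast, extends without change to the $C^s$ version (Theorem~\ref{SU(2)theorem1smooth}) and, more importantly, to the refinement in Section~\ref{thm1refinement} where $c,d$ are allowed simultaneous simple zeros at zeros of $dk$; there the Bezout identity fails outright, while the Fredholm argument survives with a modified polarization.

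For $(\mathrm{I.2})\Rightarrow(\mathrm{I.1}),(\mathrm{I.3})$ with $y_0=0$ you are working much harder than necessary, and your ``most delicate point'' is not actually resolved as written.  The paper observes that everything in $(\mathrm{I.2})$ is a function of $\mathfrak z$, so $k_1=\tilde k_1\circ\mathfrak z$ for a classical loop $\tilde k_1\colon S^1\to\mathrm{SU}(2)$; the classical theorem on the disk gives $\tilde k_1=\tilde L\tilde M\tilde U$ with $\tilde L_{21}=\tilde y^*$ and $\tilde y(0)=0$, and pulling back via $\mathfrak z$ (using equivariance to see $\tilde y^*\circ\mathfrak z=(\tilde y\circ\mathfrak z)^*$) yields the factorization on $\Sigma$ with $L_{21}=(\tilde y\circ\mathfrak z)^*\in H_-$ directly, hence $y_0=0$.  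Your inductive computation of $(P_n)_{11}$ and the appeal to ``KN pole order'' is really just this pullback in disguise (everything is a polynomial in $\mathfrak z$), and the Gauss-move bookkeeping for the infinite product is both unjustified and unnecessary: the commutation $UL=L'D''U'$ does \emph{not} in general keep the off-diagonal entries in $H_\pm$ separately, and you would need the polynomial-in-$\mathfrak z$ structure to rescue it---at which point you might as well use the pullback directly.
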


\begin{Remark}\label{thm1remarks} \quad
\begin{enumerate}\itemsep=0pt
\item[(a)] In the classical case, all three conditions are equivalent (with $\mathfrak z=z$),
and there are no constraints on $y^*$ or $x^*$ in (I.3) and~(II.3), respectively (see~\cite{PJFA}).

\item[(b)] In nonclassical cases there does not exist an equivariant function $\mathfrak z$ for which
all three conditions are equivalent (because for example~(II.2) implies $x_0=0$ in~(II.3)).
In this paper we will primarily work with the conditions
(I.1) and (I.3), and (II.1) and~(II.3).

\item[(c)] The second condition shows how to generate examples of loops with $y_0=0$ and $x_0=0$ in
(I.3) and (II.3), respectively. We lack a method to generate transparent examples with $y_0\ne 0$
or $x_0\ne 0$. At this point we do not know whether there are constraints on the set of $x^*+x_0$ which
occur in~(II.3).

\item[(d)] Below we will explain why it is essential to generalize this theorem by allowing~$a$ and~$b$ ($c$~and~$d$) to
simultaneously vanish in~$\Sigma$ (in a controlled way) in~(I.1)~((II.1), respectively), and to allow~$a_1$~($a_2$) to be a function in~(I.3) ((II.3), respectively). A tentative step in this direction is discussed in Section~\ref{thm1refinement}.
\end{enumerate}
\end{Remark}

In the classical case a loop $g\colon S^1\to {\rm SU}(2)$ has a root subgroup factorization
\begin{gather*}
g=k_1^* \operatorname{diag}\big(e^{\chi},e^{-\chi}\big) k_2
\end{gather*}
if and only if $g$ has a triangular factorization, a generic condition, and these factorizations are unique (see~\cite{PJFA}). In our more general context we ask similar questions:
can we characterize the set of $g$ which have a
factorization $g=k_1^*\operatorname{diag}(e^{\chi},e^{-\chi}) k_2$, is the relevant condition generic, and is the factorization unique?
An immediate obstacle is that, to our knowledge, there does not currently exist an analogue of the theory of triangular (or Birkhof\/f or Riemann--Hilbert) factorization for matrix valued multiloops in the context of this paper (see~\cite{Rodin} for a survey of results in this direction, and~\cite{CG} or~\cite[Chapter~8]{PS} for further background and history). Partly for this reason, we express what we know about these questions in terms of the holomorphic
bundle def\/ined by viewing a loop as a transition function. For example in the classical case, as observed by Grothendieck, a~loop $g\colon S^1 \to {\rm SL}(2,\mathbb C)$ has a Riemann--Hilbert factorization $g=g_- g_0 g_+$, where $g_-\in H^0(\Delta^*,{\rm SL}(2,\mathbb C))$, $g_0\in {\rm SL}(2,\mathbb C)$, and $g_+\in H^0(\Delta,{\rm SL}(2,\mathbb C))$, if and only if the corresponding holomorphic ${\rm SL}(2,\mathbb C)$ bundle on $\mathbb P^1$ is trivial (see~\cite{Grothendieck}, or~\cite[Chapter~8]{PS}); in the case of the sphere, such a bundle is trivial if and only if it is semistable (for the def\/inition and basic properties of semistability of principal bundles, see~\cite{Ramanathan}; for an alternative exposition, which inspires some of the goals of our project, see~\cite{Bott}).

\begin{Theorem}\label{SU(2)theorem2} Suppose $g\in C^{\infty}(S,{\rm SU}(2))$. If $g$ has a factorization
\begin{gather*}g(z)=k_1^*(z)\left(\begin{matrix} e^{\chi(z)}&0\\
0&e^{-\chi(z)}\end{matrix}\right)k_2(z),\end{gather*} where $\chi \in
C^{\infty}(S,i\mathbb R)$, and $k_1$ and $k_2$ are as in {\rm (I.1)} and {\rm (II.1)}, respectively, of Theorem~{\rm \ref{SU(2)theorem1}},
then~$E(g)$, the holomorphic ${\rm SL}(2,\mathbb C)$ bundle on~$\widehat{\Sigma}$
defined by~$g$ as a transition function, is semistable, and the associated bundle for the defining representation
has a sub-line bundle with an antiholomorphic reflection symmetry compatible with~$R$.
\end{Theorem}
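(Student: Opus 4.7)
The plan is to exhibit a holomorphic sub-line bundle $L\subset E(g)$ explicitly from the factorization, verify that $L$ has degree zero, and deduce both semistability and the reflection symmetry from this. The key observation is that the first column of $k_2^{-1}|_S$ extends holomorphically across $S$ into $\Sigma$, the first column of $k_1^*|_S$ extends holomorphically into $\Sigma^*$, and these two sections are glued across $S$ by the scalar $e^\chi$.

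Concretely, on $\Sigma$ I would set $v_+=(d,-c)^T$, which is the first column of $k_2^{-1}=k_2^\dagger|_S$; it is holomorphic on $\Sigma$ because $c$ and $d$ are, and nowhere zero by the non-simultaneous vanishing hypothesis in (II.1). On $\Sigma^*$ set $v_-=(a^*,b^*)^T$, the first column of $k_1^*|_S$; it is holomorphic on $\Sigma^*$ and nowhere zero by the hypothesis in (I.1). A direct computation on $S$ using $k_2v_+=(1,0)^T$ gives
\begin{gather*}
g\,v_+=k_1^*\operatorname{diag}\big(e^\chi,e^{-\chi}\big)(1,0)^T=e^\chi (a^*,b^*)^T=e^\chi v_-,
\end{gather*}
so $(v_+,v_-)$ spans a holomorphic line subbundle $L\subset E(g)$ with gluing cocycle $e^\chi$ on $S$. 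Because $\chi\in i\mathbb R$, $|e^\chi|=1$, so the trivial metrics $|v_\pm|^2\equiv 1$ patch to a smooth flat Hermitian metric on $L$; in particular $\deg L=0$.

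Semistability then follows from a degree count. Since $\det E(g)\cong\mathcal O$, one has $\deg(E(g)/L)=0$, and for any holomorphic line subbundle $L'\hookrightarrow E(g)$ the composition $L'\to E(g)\to E(g)/L$ is either zero -- so $L'$ factors through $L$, forcing $\deg L'\le\deg L=0$ -- or injective, giving $\deg L'\le\deg(E(g)/L)=0$. Either way $\deg L'\le 0$, which is precisely semistability of the ${\rm SL}(2,\mathbb C)$ bundle $E(g)$.

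Finally, the antiholomorphic reflection on $L$ I would define by $\sigma(s\,v_+(q)):=\bar s\,v_-(R(q))$ for $q\in\Sigma$, and the mirror formula on $\Sigma^*$; this is fiberwise antilinear and covers $R$. The check that $\sigma$ is well defined across $S$ uses exactly $\overline{e^\chi}=e^{-\chi}$ to cancel the gluing cocycle $e^\chi$ of $L$, which is where $\chi\in i\mathbb R$ enters in an essential way. The only conceptual obstacle is guessing $L$ at all: in the absence of a Birkhoff or Riemann--Hilbert theory in this setting, one has to extract it directly from the shapes of $k_1$ and $k_2$ -- but those shapes have been arranged in (I.1) and (II.1) exactly so that the ``$(1,0)^T$-orbit'' under the factorization extends holomorphically on both sides.
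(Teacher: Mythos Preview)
Your argument is correct and is genuinely different from the paper's.  The paper first invokes the equivalences (I.1)$\Leftrightarrow$(I.3) and (II.1)$\Leftrightarrow$(II.3) of Theorem~\ref{SU(2)theorem1} to obtain triangular factorizations of $k_1$ and $k_2$, multiplies everything out, and strips holomorphic factors on each side (which do not change the bundle class) until what remains is an upper-triangular transition function $\left(\begin{smallmatrix} ae^{\chi_0}&B\\0&(ae^{\chi_0})^{-1}\end{smallmatrix}\right)$; semistability is then proved by contradiction, and the $R$-symmetric sub-line bundle is the one with transition function $ae^{\chi_0}$.  You instead bypass Theorem~\ref{SU(2)theorem1} entirely and read the sub-line bundle $L$ directly off the shapes (I.1) and (II.1): the nowhere-vanishing sections $v_+=(d,-c)^T$ on $\Sigma$ and $v_-=(a^*,b^*)^T$ on $\Sigma^*$ glue via $gv_+=e^{\chi}v_-$, so $L$ has transition function $e^{\chi}$ (hence degree zero, since $\chi$ is globally defined), and semistability follows from the standard dichotomy for a line subbundle mapping to the quotient.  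Your line bundle is isomorphic to the paper's (the factors $e^{\chi_\pm}$ and the positive constant $a$ are absorbed), and your reflection check $\overline{e^{\chi}}=e^{-\chi}$ is exactly the condition $\chi_0^*=-\chi_0$ used in the paper.  The advantage of your route is that it is self-contained and does not rely on the existence of the triangular factorizations in (I.3), (II.3); the paper's route, on the other hand, makes the reduced transition function explicit, which is what it needs for the generalization in Theorem~\ref{SU(2)theorem2generalized} where $a_1,a_2$ become functions and the degree count changes.
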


\begin{Remark}\label{thm2remarks}\quad
\begin{enumerate}
\item[(a)] For a multiloop $g$ as in the theorem (or more generally for a multiloop with minimal
smoothness), we explain how $g$ def\/ines a holomorphic bundle in Appendix~\ref{hyperfunctions}.
\item[(b)] Because of the residual symmetry condition on the sub-line bundle in the theorem, the existence
of a factorization as in the theorem is not a generic condition, except in the classical case (this is explained further
in Appendix~\ref{intuition}).

\item[(c)] As an example of our ignorance, suppose that $g\colon S\to {\rm SU}(2)$ has a factorization as in the theorem. Is the same true for
the inverse, $g^{*}$?
This is true in the classical case, because in the classical case there exists a root subgroup factorization if and only if
there is a triangular factorization, and for the latter condition, $g=lmau$ is a triangular factorization if
and only if $g^{-1}=u^*m^*al^*$ is a triangular factorization. We suspect the answer is negative in general.
\end{enumerate}
\end{Remark}

A basic open question is whether a generic $g\colon S\to {\rm SU}(2)$ can be factored as a product of ${\rm SU}(2)$-valued multiloops
\begin{gather}\label{factorize}g(z)=\left(\begin{matrix} a^*(z)&-b(z)\\
b^*(z)&a(z)\end{matrix} \right)\left(\begin{matrix}e^{\chi(z)}&0\\0&e^{-\chi(z)}\end{matrix}\right)\left(\begin{matrix} d^{*}(z)&-c^{*}(z)\\
c(z)&d(z)\end{matrix} \right),
\end{gather} where $a$, $b$, $c$, $d$ are appropriate boundary values (depending on the smoothness of $g$)
of holomorphic functions in~$\Sigma$. Note that in Theorem~\ref{SU(2)theorem2}, $a$ and~$b$ ($c$ and $d$) are not allowed to simultaneously
vanish in $\Sigma$; the point is that this condition has to be relaxed. We will discuss some results along these lines in Sections~\ref{thm1refinement}, \ref{theorem2generalization} and~\ref{generic}.

\subsection{Spin Toeplitz operators}

Assume that $\Sigma$ has a spin structure. There is an induced spin structure for $\widehat{\Sigma}$ which has an
antiholomorphic ref\/lection symmetry compatible with~$R$ (see~\cite[Chapter~7]{Segal}). For simplicity we additionally
assume that the $\overline \partial$
operator for spinors on $\widehat \Sigma$ is invertible. In this case there is a~(pre-)Hilbert space polarization
for the space of ($\mathbb C^2$ valued) spinors along~$S$,
\begin{gather*}
\Omega^{1/2}(S)\otimes\mathbb C^2=H^{1/2}(\Sigma)\otimes\mathbb C^2\oplus H^{1/2}(\Sigma^*)\otimes\mathbb C^2,
\end{gather*}
where $H^{1/2}(\Sigma)$ denotes the space of holomorphic spinors on~$\Sigma$. Given a (measurable) loop $g\colon S\to {\rm SU}(2)$, there is an
associated unitary multiplication operator $M_g$ on $\Omega^{1/2}(S)\otimes\mathbb C^2$, and relative to the polarization
\begin{gather*}
M_g=\left(\begin{matrix}A(g)&B(g)\\C(g)&D(g)\end{matrix}\right).
\end{gather*}
In the classical case $A(g)$ ($B(g)$) is the classical block Toeplitz operator (Hankel operator, respectively),
associated to the symbol $g$. In general the `spin Toeplitz (Hankel) operators'~$A(g)$ ($B(g)$, respectively) have
many of the same qualitative properties
as in the classical case, because the projection $\Omega^{1/2}\to H^{1/2}$ dif\/fers from the classical projection
by a smoothing operator.

\begin{Theorem}\label{SU(2)theorem3} Suppose that $g\colon S\to {\rm SU}(2)$ $($is smooth and$)$ has a factorization as in Theo\-rem~{\rm \ref{SU(2)theorem2}}.
Then for any choice of spin structure for which $\overline{\partial}$ is invertible,
 \begin{gather*}\det\big(A(g)A\big(g^{-1}\big)\big)=\det\big(A(k_1)A\big(k_1^{-1}\big)\big)\det\big(\dot A(e^{\chi}) \dot A (e^{-\chi})\big)^2\det\big(A(k_2)A\big(k_2^{-1}\big)\big),
 \end{gather*}
where in the middle factor $\dot A(e^{\chi})$ is the compression to $H^{1/2}(\Sigma)$ of~$e^{\chi}$
as a $($scalar$)$ multiplication operator on $\Omega^{1/2}(S)$ $($which accounts for the square of this factor$)$.
\end{Theorem}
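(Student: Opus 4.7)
The starting point is the operator identity
\begin{gather*}
A(g)A\bigl(g^{-1}\bigr) = I - H(g)\tilde H\bigl(g^{-1}\bigr), \qquad H(g) = P_+ M_g P_-, \quad \tilde H(g) = P_- M_g P_+,
\end{gather*}
obtained by expanding $I = A(gg^{-1})$ with $P_+ + P_- = I$ (so $H(g)$, $\tilde H(g)$ are the paper's $B(g)$, $C(g)$). Under the smoothness hypothesis and the assumed invertibility of $\overline\partial$, the Hankel operators are smoothing enough that $H(g)\tilde H(g^{-1})$ is trace class, so $\det(A(g)A(g^{-1})) = \det(I - H(g)\tilde H(g^{-1}))$. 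The plan is to use the block-sparsity of the matrix Hankels forced by (I.1) and (II.1) to split this Fredholm determinant into three factors.

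With $k_1$ of the form (I.1), its top row is holomorphic and its bottom row antiholomorphic on $\Sigma$; entry-wise this forces the matrix Hankel $H(k_1)$ to vanish in its bottom row and $\tilde H(k_1^{-1})$ to vanish in its right column. A direct multiplication then gives
\begin{gather*}
A(k_1)A\bigl(k_1^{-1}\bigr) = \operatorname{diag}\bigl(I - H(a)\tilde H(a^*) - H(b)\tilde H(b^*),\, I\bigr),
\end{gather*}
a block-diagonal operator whose nontrivial contribution sits on the first $\mathbb C$-factor of $\mathbb C^2$. The analogous computation for $k_2$ of the form (II.1) produces a block-diagonal operator whose nontrivial contribution sits on the second $\mathbb C$-factor; and since $D = \operatorname{diag}(e^\chi, e^{-\chi})$ is scalar-diagonal, $A(D)A(D^{-1}) = \operatorname{diag}\bigl(\dot A(e^\chi)\dot A(e^{-\chi}),\, \dot A(e^{-\chi})\dot A(e^\chi)\bigr)$, whose determinant is $\det(\dot A(e^\chi)\dot A(e^{-\chi}))^2$ by Fredholm cyclicity.

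For the triple product $g = k_1^*Dk_2$ I would expand $H(g)$ and $\tilde H(g^{-1})$ via the threefold Leibniz identity
\begin{gather*}
H(fgh) = A(f)A(g)H(h) + A(f)H(g)\hat A(h) + H(f)\hat A(g)\hat A(h) + H(f)\tilde H(g)H(h)
\end{gather*}
(with $\hat A = P_-MP_-$) and its dual for $\tilde H$, then multiply. Because $D$ is scalar-diagonal, each of $A(D),H(D),\hat A(D),\tilde H(D)$ respects the $\mathbb C^2$-block decomposition; combined with the complementary column/row supports of the Hankels of $k_1^*$ and $k_2$, most of the sixteen cross terms in $H(g)\tilde H(g^{-1})$ vanish outright, and the remainder collapse under Fredholm cyclicity $\det(I+XY)=\det(I+YX)$. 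What survives assembles into a block-diagonal operator whose $(1,1)$ block recovers $A(k_1)A(k_1^{-1})$ composed with the $\dot A(e^{\pm\chi})$-scalars sitting in the first component of $A(D)A(D^{-1})$, and whose $(2,2)$ block recovers $A(k_2)A(k_2^{-1})$ composed with the $\dot A(e^{\mp\chi})$-scalars sitting in the second component.

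Taking determinants and cyclically rearranging then yields the advertised factorization. The principal obstacle is the middle step: even with the strong block structure established above, demonstrating that the sixteen cross terms in $H(g)\tilde H(g^{-1})$ collapse cleanly to the two diagonal contributions requires careful bookkeeping and is the technical core of the proof. A secondary point is verifying that the relevant operators are trace class (not merely Hilbert--Schmidt) so the Fredholm cyclicity identities apply unambiguously, which is standard given smoothness of $g$ together with the spin-invertibility hypothesis.
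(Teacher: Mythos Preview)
Your initial observations are correct and correspond to what the paper isolates as Theorem~\ref{SU(2)theorem4}: the block sparsity of the Hankels of $k_1$, $k_1^*$, $k_2$, $k_2^*$ is exactly what gives $B(k_1^*)C(k_2)=0$ and hence $A(k_1^*k_2)=A(k_1^*)A(k_2)$ at the operator level. But the step you flag as ``careful bookkeeping'' is where your plan has a genuine gap, not merely a tedious one. The block sparsity of $k_1^*$ and $k_2$ alone does \emph{not} kill the cross terms involving $B(D)$ and $C(D)$: for instance $B(k_1^*)$ has vanishing left column but nonvanishing right column, and $C(D)$ is block-diagonal, so $B(k_1^*)C(D)\neq 0$ in general. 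There is no reason for $I-B(g)C(g^{-1})$ to assume the clean block-diagonal form you describe; the decoupling you want occurs only at the determinant level, and requires an additional structural input that your expansion does not provide.

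The paper supplies that input by an entirely different mechanism. It reformulates the problem in terms of the multiplicative cocycle $\mathbf c(g,h)^{-1}=\det\bigl(A(gh)A(h)^{-1}A(g)^{-1}\bigr)$ for the central extension, and uses two facts your outline omits: the equivariance $\mathbf c(g_-g,hg_+)=\mathbf c(g,h)$ for $g_\pm$ holomorphic in $\Sigma^{(*)}$, and the \emph{triangular factorizations} (I.3), (II.3) of $k_1$, $k_2$ from Theorem~\ref{SU(2)theorem1}. Peeling off the holomorphic pieces of $k_1^*$ and $k_2$ via equivariance reduces each relevant cocycle value---$\mathbf c(k_1^*,D)$, $\mathbf c(D,k_2)$, $\mathbf c(k_1^*D,k_2)$---to a pairing between a loop in the unipotent upper-triangular subgroup and a loop in the diagonal torus, which is then shown to equal~$1$ by a short Lie-algebra cocycle calculation. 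That Lie-theoretic vanishing is what replaces your hoped-for operator cancellation, and it is not recoverable from the raw Hankel block structure; you will need to bring in the triangular factorizations (or an equivalent) to close the argument.
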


Even in the classical case, our proof of this is far more illuminating than the one in~\cite{PSIGMA}. For example
we will prove the following much stronger statement, at the level of operators:

\begin{Theorem}\label{SU(2)theorem4} Suppose that $k_1$, $k_2$ are measurable multiloops $S\to {\rm SU}(2))$ of the form
\begin{gather*}k_1=\left(\begin{matrix} a(z)&b(z)\\
-b^*(z)&a^*(z)\end{matrix} \right)\qquad \text{and} \qquad k_2=\left(\begin{matrix} d^{*}(z)&-c^{*}(z)\\
c(z)&d(z)\end{matrix} \right),
\end{gather*}
where $a$, $b$, $c$ and
$d$ are boundary values of holomorphic function in~$\Sigma$. Then \begin{gather*}A(k_1^*k_2)=A(k_1^*)A(k_2).\end{gather*}
\end{Theorem}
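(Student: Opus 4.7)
The plan is to exploit the partial one-sided holomorphicity built into $k_1^*$ and $k_2$. Since multiplication operators are multiplicative, $M_{k_1^*k_2}=M_{k_1^*}M_{k_2}$, writing $P_+$, $P_-$ for the orthogonal projections onto $H^{1/2}(\Sigma)\otimes\mathbb C^2$ and $H^{1/2}(\Sigma^*)\otimes\mathbb C^2$ and inserting $I=P_++P_-$ yields
\begin{gather*}
A(k_1^*k_2)=P_+M_{k_1^*}M_{k_2}P_+=A(k_1^*)A(k_2)+P_+M_{k_1^*}P_-M_{k_2}P_+,
\end{gather*}
so the theorem reduces to the operator identity $P_+M_{k_1^*}P_-M_{k_2}P_+=0$, i.e.\ $B(k_1^*)C(k_2)=0$.

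I would establish this by a direct two-step calculation on an arbitrary $\phi=(\phi_1,\phi_2)^T\in H^{1/2}(\Sigma)\otimes\mathbb C^2$. First, because the second row $(c,d)$ of $k_2$ consists of functions extending holomorphically to $\Sigma$, the second component $c\phi_1+d\phi_2$ of $M_{k_2}\phi$ again lies in $H^{1/2}(\Sigma)$ and is killed by $P_-$. Hence $P_-M_{k_2}P_+\phi$ has the form $(\chi,0)^T$ for some $\chi\in H^{1/2}(\Sigma^*)$. Second, because the first column $(a^*,b^*)^T$ of $k_1^*$ consists of functions extending holomorphically to $\Sigma^*$, the product $M_{k_1^*}(\chi,0)^T=(a^*\chi,b^*\chi)^T$ lies entirely in $H^{1/2}(\Sigma^*)\otimes\mathbb C^2$ and is annihilated by $P_+$. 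Composing the two steps gives the claimed vanishing.

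The only analytic point to confirm is that multiplication by the matrix entries preserves the relevant boundary regularity. Since $k_1,k_2$ are ${\rm SU}(2)$-valued, the scalar entries $a,b,c,d$ automatically lie in $H^\infty(\Sigma)$ (with modulus bounded by $1$), so multiplication by them is a bounded operator on boundary spinors that preserves holomorphicity on the appropriate half of $\widehat\Sigma$; this yields the required inclusions $H^\infty(\Sigma)\cdot H^{1/2}(\Sigma)\subseteq H^{1/2}(\Sigma)$ and its $\Sigma^*$ analogue. Past this verification the content of the theorem is purely algebraic: the partial triangularity---bottom row of $k_2$ holomorphic in $\Sigma$, left column of $k_1^*$ holomorphic in $\Sigma^*$---is exactly what forces $B(k_1^*)C(k_2)$ to vanish, so I expect no substantive obstacle beyond identifying the correct row and column that do the work.
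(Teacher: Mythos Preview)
Your proposal is correct and essentially identical to the paper's proof: the paper reduces to $B(k_1^*)C(k_2)=0$ via $A(k_1^*k_2)=A(k_1^*)A(k_2)+B(k_1^*)C(k_2)$ and then computes directly that for $f\in H_+$, $P_-(k_2f)=(P_-(d^*f_1-c^*f_2),0)^T$ (because the second row of $k_2$ is holomorphic in $\Sigma$) and that $P_+$ of $k_1^*$ applied to this vanishes (because the first column of $k_1^*$ is holomorphic in $\Sigma^*$). Your added remark on $H^\infty$-boundedness is a useful gloss the paper leaves implicit.
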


There are other senses in which the factors $k_1$, $\exp(\chi)$ and $k_2$ are expected to be ``independent''. For example
in the classical case we have previously conjectured that these factors are independent random variables with respect to the large
temperature limit for Wiener measure on the loop group, and that they Poisson commute with respect to the Evens--Lu
homogeneous Poisson structure on ${\rm LSU}(2)$ (see~\cite{PSIGMA}).
To properly formulate nonclassical analogues of these conjectures, we need to prove the existence of a factorization for a generic
loop, as we discussed at the end of the previous subsection.

\begin{Remark} \quad
\begin{enumerate}\itemsep=0pt
\item[(a)] One can replace the def\/ining representation $\mathbb C^2$ by any representation of ${\rm SU}(2)$ and there is a
corresponding factorization.
\item[(b)] $A(g)$ is a Fredholm operator. As elegantly explained by Quillen and Segal, $\det(A(g))$ can be def\/ined as a section of a determinant line bundle, or alternatively as a function on a~central extension of maps $S\to {\rm SU}(2)$ (see~\cite{PS}). In the classical case
Theorem \ref{SU(2)theorem3} can be ref\/ined to yield a factorization for $\det(A(g))$, but this is open in general.
\item[(c)] $\det(A(g))$ is essentially the state corresponding to $\Sigma$ for the ${\rm SU}(2)$ WZW model at level one. It is interesting to ask whether there might be some factorization for the corresponding state at higher level. This is true in the classical case, but for a trivial reason: the state at level~$l$ is the $l$th power of the state at level one, i.e., one replaces the def\/ining representation by $(\mathbb C^2)^{\otimes l}$.
\end{enumerate}
\end{Remark}

\subsection{Plan of the paper}

In Section~\ref{LTF} we use ideas of Krichever and Novikov to obtain a linear
triangular factorization, as in~(\ref{LinearTF}), for a reasonable function $f\colon S \to\mathbb C$ (in fact for any
hyperfunction). In Section~\ref{SU(2)thm1} we prove Theorem~\ref{SU(2)theorem1} and a technical ref\/inement for Holder continuous loops.
In Section \ref{thm1refinement} we will present a generalization of Theorem \ref{SU(2)theorem1} in which we relax the simultaneously nonvanishing hypotheses in~(I.1) and~(II.1). In Sections~\ref{SU(2)thm2} and~\ref{SU(2)thm3},
we prove Theorems~\ref{SU(2)theorem2} and~\ref{SU(2)theorem3}, respectively. In Section~\ref{generic} we
further discuss factorization and semistable bundles. In Section~\ref{scalarcase}
we present some calculations of determinants for spin Toeplitz operators in the scalar case (the middle factor in Theorem~\ref{SU(2)theorem3}); one suspects this can be substantially improved. Finally there is Appendix~\ref{hyperfunctions} on hyperfunctions, which we use sporadically in the text.

In part~II of this paper~\cite{BP2}, we will present a number of explicit calculations, especially for elliptic and hyperelliptic surfaces. A~motivating question is whether it is possible to view spin Toeplitz operators in this paper as deformations of
classical Toeplitz operators by allowing the surface to degenerate.

\section{Linear triangular factorization}\label{LTF}

In the remainder of the paper, $\Sigma$ denotes a connected compact Riemann surface with nonempty
boundary~$S$. Let $\widehat{\Sigma}$
denote the double, i.e.,
\begin{gather*}\widehat{\Sigma}=\Sigma^* \circ \Sigma,\end{gather*}
where $\Sigma^*$ is the adjoint of $\Sigma$, the surface $\Sigma$
with the orientation reversed, and the composition is sewing along
the common boundary $S$. Let $R$ denote the antiholomorphic
involution (or ref\/lection) f\/ixing~$S$.

The singular cohomology group $H^1(\widehat{\Sigma},\mathbb R)$ is a real symplectic vector space with respect to wedge product.
There is a positive polarization of the complexif\/ication
\begin{gather*}H^1\big(\widehat{\Sigma},\mathbb C\big)=H^{1,0}\oplus H^{0,1},\end{gather*}
where we concretely identify $H^{1,0}$ with the complex $\text{genus}(\widehat{\Sigma})$-dimensional vector space
of holomorphic dif\/ferentials on $\widehat{\Sigma}$,
and~$H^{0,1}$ is the conjugate vector space of antiholomorphic dif\/feren\-tials.
In this way we can think of $H^{1,0}$ as the dual of $ H^{0,1}$ and vice versa.
There is also a~Dolbeault isomorphism
\begin{gather*}
H^1\big(\mathcal O\big(\widehat{\Sigma}\big)\big) \to H^{0,1}\big(\widehat{\Sigma}\big),
\end{gather*}
where $\mathcal O(\widehat{\Sigma})$ denotes the
sheaf of holomorphic functions (see~\cite[p.~45]{GH}, but we will spell out what we need in~(\ref{connectmap})).

Let $H^0(S)$ denotes the space of analytic functions on~$S$ (hence each such function can be extended to a holomorphic function in
an open neighborhood of $S$ in $\widehat{\Sigma}$).
Using Mayer--Vietoris for the sheaf of holomorphic functions, corresponding to a slight thickening of the covering
of $\widehat{\Sigma}$ by $\Sigma$ and $\Sigma^*$, there is an exact sequence of vector spaces
\begin{gather*}0\to H^0\big(\widehat{\Sigma}\big) \to H^0(\Sigma)\oplus H^0(\Sigma^*)
\to H^{0}(S) \to H^{0,1}\big(\widehat{\Sigma}\big)\to 0.
\end{gather*}
In terms of the Dolbeault isomorphism, the connecting map $ H^{0}(S) \to H^{0,1}(\widehat{\Sigma})$ is realized concretely as a map
\begin{gather}\label{connectmap}H^{0}(S) \to H^{0,1}\big(\widehat{\Sigma}\big)\colon\
 f \to \begin{cases}\overline\partial f_1& \text{in} \ \Sigma,\\
 \overline{\partial}f_2&\text{in} \ \Sigma^*,
 \end{cases} \end{gather} where in the $C^{\infty}$ category,
$f=f_1-f_2$, $f_1$ is smooth in (a slight open enlargement of)~$\Sigma$, and~$f_2$ is smooth in (a~slight
open enlargement of)~$\Sigma^*$.

In reference to the following statements, recall that the space of complex hyperfunctions on an oriented
analytic one-manifold is the dual
of the space of complex analytic one forms on the one-manifold, and is denoted by $\operatorname{Hyp}(S)$;
see Appendix~\ref{hyperfunctions} (and~\cite{Sato} for the original reference).
The introduction of hyperfunctions is
possibly a distraction at this point of the paper. The point is that eventually we will want to
consider `transition functions' with minimal regularity; all such functions can be viewed as
hyperfunctions, and the natural domain for connecting maps, such as~(\ref{connectmap}), is a space of hyperfunctions.

\begin{Lemma} The connecting map \eqref{connectmap}
is dual to the injective map
\begin{gather*}
C^{\omega}\Omega^1(S,\mathbb C)\leftarrow H^{1,0}\big(\widehat{\Sigma}\big)\equiv H^1\big(\widehat{\Sigma},\mathbb R\big)\leftarrow 0
\end{gather*}
given by restriction of a holomorphic differential to~$S$. Consequently the
connecting map extends continuously to a surjective map
\begin{gather*}
\operatorname{Hyp}(S) \to H^{0,1}\big(\widehat{\Sigma}\big)\equiv H^1\big(\widehat{\Sigma},\mathbb R\big)\to 0.
\end{gather*}
\end{Lemma}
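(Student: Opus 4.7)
My plan is to establish the duality by a direct Stokes--theorem computation on a representative $(0,1)$-form, and then derive continuous extension and surjectivity by a finite-dimensional linear-algebra argument.

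First I set up the pairings on both sides. By definition $\operatorname{Hyp}(S)$ is the topological dual of $C^{\omega}\Omega^{1}(S,\mathbb{C})$, so a dual pair of maps is exactly what is claimed. On the other side, $H^{0,1}(\widehat{\Sigma})$ and $H^{1,0}(\widehat{\Sigma})$ are finite-dimensional vector spaces placed in duality by the intersection form
\begin{gather*}
\langle [\alpha],\omega\rangle = \int_{\widehat{\Sigma}} \alpha\wedge\omega,\qquad \alpha\in \Omega^{0,1}\big(\widehat{\Sigma}\big),\ \omega\in H^{1,0}\big(\widehat{\Sigma}\big).
\end{gather*}
Thus to prove the first assertion, it suffices to verify, for $f\in H^{0}(S)$ and $\omega\in H^{1,0}(\widehat{\Sigma})$, the identity
\begin{gather*}
\int_{\widehat{\Sigma}} \alpha_f \wedge \omega \;=\; \int_{S} f\,(\omega|_S),
\end{gather*}
where $\alpha_f$ denotes any smooth $(0,1)$-representative of the image of $f$ under \eqref{connectmap}.

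Next I compute the left-hand side. Choose a smooth splitting $f=f_1-f_2$ on a neighborhood of $S$ with $f_1$ extended smoothly over $\Sigma$ and $f_2$ over $\Sigma^*$; then $\alpha_f=\overline\partial f_1$ on $\Sigma$ and $\alpha_f=\overline\partial f_2$ on $\Sigma^*$, and these glue because $f$ is holomorphic near $S$. Since $\omega$ is of type $(1,0)$ and $\widehat{\Sigma}$ is one-dimensional, $\partial f_i\wedge\omega=0$, so $\overline\partial f_i\wedge\omega = d(f_i\omega)$. Stokes' theorem then gives
\begin{gather*}
\int_{\widehat{\Sigma}}\alpha_f\wedge\omega
 = \int_{\Sigma} d(f_1\omega) + \int_{\Sigma^*} d(f_2\omega)
 = \int_{S}f_1\omega - \int_{S} f_2\omega
 = \int_{S} f\,\omega,
\end{gather*}
the orientation sign appearing because $\partial\Sigma$ and $\partial\Sigma^*$ carry opposite orientations as boundaries in $\widehat{\Sigma}$. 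This is the desired duality statement.

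For the second part I first check injectivity of the restriction map $H^{1,0}(\widehat{\Sigma})\to C^{\omega}\Omega^{1}(S,\mathbb{C})$: a nonzero holomorphic differential on the connected compact Riemann surface $\widehat{\Sigma}$ has only finitely many zeros, so it cannot vanish on the union of circles $S$. Because the target $H^{0,1}(\widehat{\Sigma})$ is finite-dimensional, the adjoint of an injection from a finite-dimensional space into a locally convex space is automatically surjective (choose a basis of $H^{1,0}$, extend the dual functionals by Hahn--Banach), and the original map, having finite-dimensional range, extends continuously from the dense subspace $H^{0}(S)\subset \operatorname{Hyp}(S)$ simply by choosing hyperfunction preimages of the dual basis. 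The extended map then coincides with the dual of the restriction map, proving both continuity and surjectivity.

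The only delicate point is the Stokes calculation: one must verify that the ambiguity in the splitting $f=f_1-f_2$ does not affect the pairing (it produces globally smooth $f_1$, which pairs trivially via Stokes on the closed surface $\widehat{\Sigma}$), and one must be vigilant about the orientation convention that makes $\partial\Sigma$ and $\partial\Sigma^*$ cancel on the common boundary.
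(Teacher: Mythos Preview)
Your proof is correct and follows essentially the same route as the paper: the Stokes computation showing $\int_{\widehat{\Sigma}}\alpha_f\wedge\omega=\int_S f\omega$ is identical to the paper's, and the extension/surjectivity step is the same idea (dual of a continuous injective map from a finite-dimensional space). One small point of phrasing: your extension argument via ``density of $H^0(S)$ in $\operatorname{Hyp}(S)$'' is unnecessary and potentially delicate; the cleaner statement, which you in fact arrive at in your last clause, is simply that the transpose $r^*\colon \operatorname{Hyp}(S)\to (H^{1,0})^*\cong H^{0,1}$ of the restriction map is automatically defined and continuous on all of $\operatorname{Hyp}(S)$, and your Stokes identity shows it agrees with the connecting map on $H^0(S)$---so $r^*$ \emph{is} the extension, with surjectivity following from injectivity of $r$ exactly as you argue.
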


\begin{proof} We f\/irst show that the connecting map is dual to the restriction map.
Suppose that $f \in H^0(S)$ (i.e., functions which are holomorphic in a neighborhood of $S$) and write $f=f_1-f_2$
as in the realization of the connecting map (\ref{connectmap}), $f\to \overline{\partial}f_{j}$. Suppose also that
$\omega\in H^{1,0}(\widehat{\Sigma}) $. To prove the formula for the transpose, we must show that
\begin{gather*}
(\overline \partial f_{j},\omega)=(f,\omega\vert_S).
\end{gather*}
The left-hand side equals (using $d\omega=0$ and Stokes's theorem)
\begin{gather*}
\int_{\Sigma}\overline \partial f_1\wedge \omega +\int_{\Sigma^*}\overline \partial f_2\wedge \omega=\int _Sf_1
\wedge \omega-\int_Sf_2\wedge \omega=\int_S f\wedge \omega
\end{gather*} and this equals the right-hand side.

The fact that the connecting map has a continuous extension to hyperfunctions follows from the fact that it is the dual
of the restriction map, which is continuous. The extension is described more explicitly in Appendix~\ref{hyperfunctions}. The basic
idea (due to Sato) is that a hyperfunction $f$ on $S$ can be represented as a pair, which we heuristically write
as $f=f'+f''$, where $f'$ is holomorphic in an annular region just outside $S$ and $f''$ is holomorphic in an annular
region just inside $S$. We then compute the connecting map using three open sets, the interior of $\Sigma$, a suf\/f\/iciently
small annular region containing $S$, and the interior of $\Sigma^*$. One must check that this is independent of
the choice of representation for the hyperfunction: we can also represent $f=(f'+g)+(f''-g)$, where $g$ is holomorphic
in a small annular region containing~$S$.
\end{proof}

Fix a basepoint $(0)\in \Sigma^0:=\text{interior}(\Sigma)$, and let $(\infty)\in \Sigma^{*}$
denote the ref\/lected basepoint. The starting point for a series by papers by Krichever and Novikov
(see for example~\cite{KN}, which summarizes several of their papers) is the following

\begin{Lemma}There exists a unique meromorphic differential $dk$ on $\widehat{\Sigma}$ which is holomorphic in
$\widehat{\Sigma}{\setminus}\{(0),(\infty)\}$, has simple poles at $(0),(\infty)$ with
residues $\pm 1$, respectively, and satisfies $\operatorname{Re}(\int_{\gamma}dk)$ $=0$,
for all closed loops in $\widehat{\Sigma}{\setminus}\{(0),(\infty)\}$.
\end{Lemma}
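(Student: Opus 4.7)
My plan is to construct $dk$ as the sum of any meromorphic differential of the third kind having the prescribed poles and residues plus a compensating holomorphic differential chosen to kill the remaining real periods; uniqueness then reduces to a Hodge-theoretic injectivity statement for the real period map on holomorphic differentials.

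Let $g:=\operatorname{genus}(\widehat{\Sigma})$. First I would invoke the classical existence of a meromorphic differential $\omega_0$ on $\widehat{\Sigma}$, holomorphic off $\{(0),(\infty)\}$, with simple poles at these two points and residues $\pm 1$; this is a standard application of Riemann--Roch (the obstruction to prescribing a principal part vanishes because the two residues sum to zero). Observe that around small loops enclosing $(0)$ or $(\infty)$, $\omega_0$ has periods $\pm 2\pi i$, which are purely imaginary; thus the puncture loops automatically contribute zero to the real period map, and the only adjustment needed is on a basis of $H_1(\widehat{\Sigma},\mathbb{R})$.

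Next I would analyze the $\mathbb{R}$-linear \emph{real period map}
\[
P\colon H^0\big(\widehat{\Sigma},\Omega^1\big)\longrightarrow \operatorname{Hom}_{\mathbb{R}}\big(H_1\big(\widehat{\Sigma},\mathbb{R}\big),\mathbb{R}\big),\qquad \omega\longmapsto \Big(\gamma\mapsto \operatorname{Re}\int_{\gamma}\omega\Big).
\]
Both sides have real dimension $2g$, so it is enough to establish injectivity. If $\omega\in\ker P$, the real closed $1$-form $\omega+\overline{\omega}$ has periods $2\operatorname{Re}\int_\gamma\omega=0$ on every cycle, so it is exact: $\omega+\overline{\omega}=du$ for some real-valued smooth function $u$ on $\widehat{\Sigma}$. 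Matching $(1,0)$-parts gives $\omega=\partial u$, and holomorphicity $\overline{\partial}\omega=0$ becomes $\overline{\partial}\partial u=0$, i.e., $u$ is harmonic. A harmonic function on the compact surface $\widehat{\Sigma}$ must be constant, so $\omega=\partial u=0$; hence $P$ is an isomorphism.

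Finally, using the surjectivity of $P$, I would pick $\eta\in H^0(\widehat{\Sigma},\Omega^1)$ with $\operatorname{Re}\int_\gamma \eta=\operatorname{Re}\int_\gamma\omega_0$ for all $\gamma\in H_1(\widehat{\Sigma},\mathbb{R})$, and set $dk:=\omega_0-\eta$. By construction $dk$ has the required pole structure; its real periods over $H_1(\widehat{\Sigma},\mathbb{R})$ vanish by the choice of $\eta$, and its real periods over the puncture loops vanish automatically since $\eta$ is holomorphic there while the residues of $\omega_0$ are real. Uniqueness is immediate: if two solutions exist their difference lies in $\ker P$, hence is zero. The main substantive obstacle is the injectivity of $P$ — this is the step where the compactness of $\widehat{\Sigma}$ is genuinely used — but the argument above is a standard and well-known Hodge-theoretic computation.
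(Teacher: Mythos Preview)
Your argument is correct and, in fact, more complete than the paper's. Both proofs begin with Riemann--Roch: the paper uses it to compute that the space of meromorphic differentials with at most simple poles at $(0),(\infty)$ has dimension $g+1$, while you invoke it only to produce one differential $\omega_0$ of the third kind with the prescribed residues. The difference lies in the treatment of the normalization. The paper simply asserts in its final sentence that ``the normalizations in the statement of the theorem uniquely determine $dk$'' without further justification; you supply precisely that justification by proving that the real period map $P\colon H^0(\widehat{\Sigma},\Omega^1)\to H^1(\widehat{\Sigma},\mathbb R)^*$ is an $\mathbb R$-linear isomorphism, via the harmonic-function argument. This is the standard way to fill the gap, and it simultaneously gives existence (surjectivity of $P$ lets you kill the real periods of $\omega_0$) and uniqueness (injectivity of $P$). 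So your route is not really different in spirit, but it makes explicit the Hodge-theoretic content that the paper leaves implicit.
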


\begin{proof} Let $\kappa$ denote the canonical line bundle, and $L$ the line bundle
corresponding to the divisor $(0)+(\infty)$. The holomorphic sections of the line bundle $\kappa\otimes L$
are meromorphic dif\/ferentials with at most simple poles at $(0),(\infty)$. The Riemann--Roch theorem
implies that \begin{gather*}
\dim\big(H^0(\kappa\otimes L)\big)-\dim\big(H^1(\kappa\otimes L)\big)=\deg(\kappa\otimes L)-(g-1),
\end{gather*}
where $g=\operatorname{genus}(\widehat{\Sigma})$. But
\begin{gather*}
\dim\big(H^1(\kappa\otimes L)\big)=\dim\big(H^0\big(\kappa\otimes (\kappa\otimes L)^{-1}\big)\big)=\dim\big(H^0\big(L^{-1}\big)\big)=0.
\end{gather*}
Hence the dimension of the space of meromorphic dif\/ferentials with at most simple poles at~$(0),(\infty)$
is $g+1$. A~dif\/ferential cannot have a single pole. So any meromorphic dif\/ferential in this space, which
is not globally holomorphic, will have
simple poles at~$(0)$ and $(\infty)$ with the sum of the residues necessarily equal to zero.
The normalizations in the statement of the theorem uniquely determine~$dk$.
\end{proof}

In the classical case $dk=dz/z$.
Following Krichever and Novikov, set $\tau=\operatorname{Re}(k)$, which is a~single valued harmonic function on~$\widehat{\Sigma}{\setminus}\{(0),(\infty)\}$, which limits
to~$-\infty$ at~$(0)$ and $+\infty$ at $(\infty)$; this is thought of as a distinguished time parameter.

The following summarizes a ``linear triangular decomposition'' for functions which follows immediately from the
existence of $dk$. The point is that the existence of $dk$, which induces a~metric on~$S$,
enables us to f\/ind a complement to the sum of the
two subspaces, holomorphic functions in the interior of $\Sigma$, and holomorphic functions in the interior of~$\Sigma^*$.

\begin{Proposition}\label{hyperfunction} A hyperfunction $\chi\colon \widehat{\Sigma}\to\mathbb C$ can be written uniquely
\begin{gather*}
\chi=\chi_- + \chi_0 +\chi_+,
\end{gather*}
where $\chi_-\in H^0(\Sigma^{0*},(\infty);\mathbb C,0)$, $\chi_+\in H^0(\Sigma^0,(0);\mathbb C,0)$, and
$\chi_0 dk$ is in the span of $H^{1,0}(\widehat{\Sigma})$ and~$dk$.
\end{Proposition}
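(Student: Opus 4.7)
The plan is to show that the linear map
\begin{gather*}
\Phi\colon A\oplus B\oplus C\to\operatorname{Hyp}(S),\qquad (\chi_+,\chi_-,\chi_0)\mapsto\chi_+|_S+\chi_-|_S+\chi_0|_S,
\end{gather*}
is a bijection, where $A:=H^0(\Sigma^0,(0);\mathbb{C},0)$, $B:=H^0(\Sigma^{0*},(\infty);\mathbb{C},0)$, and $C:=\{\chi_0\colon\chi_0\,dk\in X\}$ with $X:=\operatorname{span}(H^{1,0}(\widehat\Sigma),dk)$.

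First I would pin down the dimensions. By the preceding Lemma, the Mayer--Vietoris exact sequence extended to hyperfunctions identifies the image of $H^0(\Sigma)\oplus H^0(\Sigma^*)$ in $\operatorname{Hyp}(S)$ with $\ker(\pi)$ for the connecting map $\pi\colon\operatorname{Hyp}(S)\to H^{0,1}(\widehat\Sigma)$ (of codimension $g:=\operatorname{genus}(\widehat\Sigma)$), whose kernel is the diagonal $\mathbb{C}$ of constants. Restricting to $A\oplus B$ removes two dimensions from the source and kills this diagonal, so $A\oplus B\hookrightarrow\operatorname{Hyp}(S)$ is injective with image of codimension exactly $g+1$. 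By the earlier Lemma, $X$ coincides with the $(g+1)$-dimensional space of meromorphic differentials with at most simple poles at $(0)$ and $(\infty)$, so $\dim C=g+1$; the proposition therefore reduces to the transversality $C\cap(A+B)=0$.

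Suppose $\chi_0\in C\cap(A+B)$, written $\chi_0|_S=\chi_+|_S+\chi_-|_S$ with $(\chi_+,\chi_-)\in A\oplus B$ and $\chi_0\,dk|_S=\omega|_S$ with $\omega=\omega_0+c\,dk\in X$. Multiplying by $dk$ and rewriting as $(\chi_+\,dk)|_S=(\omega-\chi_-\,dk)|_S$ glues to a meromorphic $1$-form $\Omega$ on $\widehat\Sigma$ with $\Omega|_{\Sigma^0}=\chi_+\,dk$ (holomorphic, since $\chi_+((0))=0$ cancels the pole of $dk$) and $\Omega|_{\Sigma^{0*}}=\omega-\chi_-\,dk$ (only a potential simple pole at $(\infty)$, of residue $c$). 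The residue theorem forces $c=0$, so $\omega\in H^{1,0}(\widehat\Sigma)$ and $\Omega$ is a global holomorphic differential. Its restriction $\chi_+\,dk$ to $\Sigma^0$ then vanishes at every zero of $dk$ in $\Sigma^0$, while symmetrically $\omega-\Omega=\chi_-\,dk$ on $\Sigma^{0*}$ vanishes at every zero of $dk$ there.

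The main obstacle is to then conclude $\Omega=0$ (after which $\omega=0$, $\chi_+=\chi_-=\chi_0=0$ follow immediately). By Riemann--Roch, $\dim H^0(K-Z_{\Sigma^0}(dk))=\dim H^0(Z_{\Sigma^0}(dk))-1$, so the vanishing of $\Omega\in H^0(K-Z_{\Sigma^0}(dk))$ is equivalent to the non-speciality of the divisor $Z_{\Sigma^0}(dk)$ of the $g$ zeros of $dk$ in $\Sigma^0$. I would establish this by a residue pairing argument: for any hypothetical non-constant meromorphic $f$ on $\widehat\Sigma$ with pole divisor $\leq Z_{\Sigma^0}(dk)$, computing $\int_S f\,dk$ as a residue sum on $\Sigma^0$ (which vanishes, since $dk$ has simple zeros at each pole of $f$) and on $\Sigma^{0*}$ (where $f\,dk$ has only a pole at $(\infty)$, with residue $f((\infty))$) yields $f((\infty))=0$, and symmetrically $f((0))=0$; meanwhile $\int_S f\,\omega=0$ for $\omega\in H^{1,0}(\widehat\Sigma)$ gives the constraint that the residue vector $(\operatorname{Res}_{q_i}(f))$ annihilates the image of the evaluation map $H^{1,0}(\widehat\Sigma)\to\mathbb{C}^g$ at $Z_{\Sigma^0}(dk)$. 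Combined with the mirror constraints at the $R$-conjugate points in $Z_{\Sigma^{0*}}(dk)$, these conditions leave only $f\equiv0$. Once non-speciality is in hand, transversality follows and the codimension count promotes $\Phi$ to the required isomorphism.
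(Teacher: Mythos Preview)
The paper does not supply a proof of this proposition; it says only that the decomposition ``follows immediately from the existence of $dk$''. So there is nothing to compare against directly. Your structural reduction is the natural line and is correct: via Mayer--Vietoris you place $A\oplus B$ as a codimension-$(g{+}1)$ subspace of $\operatorname{Hyp}(S)$, count $\dim C=g{+}1$, and reduce to the transversality $C\cap(A+B)=0$, which in turn comes down to the non-speciality of the divisor $D=Z_{\Sigma^0}(dk)$ of zeros of $dk$ inside $\Sigma^0$ (equivalently: no nonzero $\Omega\in H^{1,0}(\widehat\Sigma)$ vanishes at all of $q_1,\dots,q_g$).

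Your argument for non-speciality, however, does not go through. When you compute $\int_S f\,dk$ by residues in $\Sigma^0$ you have omitted the simple pole of $dk$ at $(0)$; the correct contribution is $f((0))$, and matching with the $\Sigma^{0*}$ side gives only $f((0))=f((\infty))$, not that either vanishes. There is no ``symmetric'' argument available, since your hypothetical $f$ has all its poles in $\Sigma^0$. Finally, the constraint you extract from $\int_S f\omega=0$ says precisely that the residue vector of $f$ lies in the cokernel of the evaluation map $H^{1,0}\to\bigoplus_i T^*_{q_i}$; that cokernel is trivial if and only if the evaluation map is an isomorphism, which is exactly the non-speciality you are trying to prove, so this step is circular. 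The appeal to ``mirror constraints'' adds nothing, because $f$ is already holomorphic at the $R(q_i)$.

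A correct argument uses the real structure. Suppose $\Omega\in H^{1,0}(\widehat\Sigma)$ vanishes at each $q_i$. Then the meromorphic one-form $\Omega\,\Omega^*/dk$ is holomorphic throughout $\Sigma^0$: the simple poles of $1/dk$ at the $q_i$ are cancelled by the zeros of $\Omega$, and at $(0)$ the simple pole of $dk$ makes $1/dk$ vanish. Hence $\int_S \Omega\,\Omega^*/dk=0$ by the residue theorem in $\Sigma^0$. On the other hand, along $S$ one has $\Omega^*=\overline{\Omega}$ and $(2\pi i)^{-1}dk>0$, so this integral is a nonzero multiple of $\int_S \lvert\Omega/dk\rvert^2\,(2\pi i)^{-1}dk\ge 0$, forcing $\Omega|_S=0$ and hence $\Omega=0$. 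The same argument on the $\Sigma^{0*}$ side then gives $\omega=0$, and your transversality follows. (This is exactly the non-speciality the paper invokes, with a terse ``$2\operatorname{genus}(\widehat\Sigma)$ zeros'' justification, in the proof of the subsequent lemma.)
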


\begin{Remark}\label{ltfcomments} \quad
\begin{enumerate}\itemsep=0pt
\item[(a)] The zeros of $dk$ are the critical points for the time parameter $\tau$.
For $dk$ the number of zeros is $2\text{genus}(\widehat{\Sigma})$, because there are two poles,
and the degree of $\kappa$ is $2\text{genus}(\widehat{\Sigma})-2$. The zeros, are located at the singular points
for the time slices of the surface.

\item[(b)] From (a) it follows that the set of $\chi_0$ can be characterized in the following way:
$\chi_0$ can have at most simple poles at the simple zeros of~$dk$, and $\chi_0(0)=\chi_0((\infty))$. (To explain the last condition, note that $\chi_0dk=c_0dk+\text{holomorphic dif\/ferential}$,
and hence $\chi_0-c_0$ must vanish at $(0),(\infty)$.)

\item[(c)] Since the complementary functions $\chi_0$ can have poles at points other than the base\-points~$(0)$ and~$(\infty)$, they
do not necessarily belong to the algebra $\mathcal M^0$ of meromorphic functions which are holomorphic in
$\widehat{\Sigma}{\setminus}\{(0),(\infty)\}$. Consequently our choice of complementary subspace is not the same
as the implicit choice made by Krichiver and Novikov (this refers to the exceptional cases~(1.5) after Lemma~1 in~\cite{KN}). For emphasis,
note our choice does not depend on the general position hypothesis for~$(0)$ in~\cite{KN}.

\item[(d)] Suppose that $\Sigma$ has genus zero. In this case there is an alternative splitting, where $\chi_0$ is locally
constant on~$S$. This has the advantage that there is no dependence on the choice of basepoint. We will consider this elsewhere.
\end{enumerate}
\end{Remark}

For smooth functions denote the decomposition in Proposition~\ref{hyperfunction} by
\begin{gather*}
\Omega^0(S)= H_+\oplus H_0\oplus H_-.
 \end{gather*}

\begin{Lemma}\label{stabilitybymultiply} The action of multiplication by $F\in H^0(\Sigma)$ maps $H_0\oplus H_+$ into itself.
\end{Lemma}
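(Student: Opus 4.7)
The plan is to show $(F\chi)_- = 0$ in the decomposition $F\chi = (F\chi)_+ + (F\chi)_0 + (F\chi)_-$, for arbitrary $\chi \in H_0 \oplus H_+$ and $F \in H^0(\Sigma)$. The key observation is that each piece extends meromorphically off $S$ with a controlled pole structure, so that $(F\chi)_-$ itself extends to a meromorphic function on all of $\widehat{\Sigma}$; the residue theorem then forces it to vanish.

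First I would extend everything into $\Sigma$. Using the characterization in Remark~\ref{ltfcomments}(b), an element $\chi \in H_0 \oplus H_+$ admits a meromorphic extension to $\Sigma^0$ whose only poles are simple poles at zeros of $dk$ lying in $\Sigma^0$; it is regular at $(0)$, with $\chi((0))=\chi_0((0))$. Since $F$ is holomorphic on $\Sigma$, the product $F\chi$ has the same local description. The other two summands are also well understood: $(F\chi)_+$ extends holomorphically to $\Sigma^0$ vanishing at $(0)$, while $(F\chi)_0$ extends meromorphically to $\widehat{\Sigma}$ with at most simple poles at zeros of $dk$ and coincident values at $(0)$ and $(\infty)$. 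Subtracting, the relation $(F\chi)_- = F\chi - (F\chi)_+ - (F\chi)_0$ on $S$ shows that $(F\chi)_-$ extends meromorphically from the $\Sigma$-side with only simple poles at zeros of $dk$ in $\Sigma^0$, and is regular at $(0)$. From the $\Sigma^*$-side it was already defined as a holomorphic function on $\Sigma^{0*}$ vanishing at $(\infty)$. Because the two extensions agree on the real analytic curve $S$, they fit together into a globally meromorphic function on $\widehat{\Sigma}$.

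Now I would apply the residue theorem to the meromorphic differential $(F\chi)_- \cdot dk$ on $\widehat{\Sigma}$. At each zero of $dk$ in $\Sigma^0$ the simple pole of $(F\chi)_-$ cancels the simple zero of $dk$; at $(\infty)$ the simple zero of $(F\chi)_-$ absorbs the simple pole of $dk$; on the remainder of $\Sigma^{0*}$ it is holomorphic. The only candidate singularity is a simple pole at $(0)$, coming from $dk$. Since the sum of residues of a meromorphic differential on a compact Riemann surface is zero, this lone residue must vanish, so $(F\chi)_- \cdot dk$ is actually a holomorphic differential, i.e., an element of $H^{1,0}(\widehat{\Sigma})$. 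By the definition of $H_0$ in Proposition~\ref{hyperfunction}, this places $(F\chi)_-$ in $H_0$. Combined with $(F\chi)_- \in H_-$ by construction, the directness of the decomposition yields $(F\chi)_- = 0$.

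The main technical point is the bookkeeping of singularities at $(0)$: both $F\chi$ and $(F\chi)_0$ contribute finite values there, so $(F\chi)_-((0))$ need not itself vanish, and the argument works only because the sole residue of $(F\chi)_- \cdot dk$ is forced to be zero by the residue theorem. Once the pole structure at the zeros of $dk$, at $(0)$, and at $(\infty)$ is pinned down via Remark~\ref{ltfcomments}(b), the gluing across $S$ and the residue computation are routine.
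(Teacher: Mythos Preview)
Your argument is correct, and it takes a genuinely different route from the paper's. The paper treats $H_+$ and $H_0$ separately: multiplication by $F$ obviously preserves $H_+$, and for $\chi_0\in H_0$ the paper shows one can subtract a linear combination $\sum c_j\chi_0^{(j)}$ of zero modes so that $F\chi_0-\sum c_j\chi_0^{(j)}$ is holomorphic in $\Sigma$. This reduces to the nonsingularity of the $g\times g$ matrix of residues $\operatorname{Res}(\chi_0^{(j)},z_i^0)$, which is reinterpreted as injectivity of the evaluation map $H^{1,0}(\widehat{\Sigma})\to\bigoplus_i T^*_{z_i^0}$ and settled by a degree count on~$\kappa$. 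Your proof instead works globally: you glue $(F\chi)_-$ across $S$ into a meromorphic function on all of $\widehat{\Sigma}$, check that $(F\chi)_-\,dk$ has at most one simple pole (at~$(0)$), and kill that pole by the residue theorem, so $(F\chi)_-\,dk\in H^{1,0}(\widehat{\Sigma})$ and hence $(F\chi)_-\in H_0\cap H_-=\{0\}$. Your approach is slicker and sidesteps the explicit linear algebra; the paper's approach, on the other hand, is constructive, in that it actually exhibits the zero-mode correction needed to land $F\chi_0$ in $H^0(\Sigma)$. One small remark: your gluing step across $S$ is justified because for smooth $\chi$ the components of the decomposition have matching boundary values on~$S$ (this is implicit in the hyperfunction formulation of Proposition~\ref{hyperfunction}, and explicit for H\"older data via Theorem~\ref{projcontinuity}), so Painlev\'e-type continuation applies.
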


\begin{proof} First note it is obvious that multiplication by $F$ maps $H_+$ into itself.

Suppose that $\chi_0\in H_0$. Let $\chi_0^{(j)}$ denote a set of functions in $H_0$ such that
$\omega_j:=\chi_0^{(j)}dk$ is a~basis for the space of holomorphic dif\/ferentials on $\widehat{\Sigma}$.
We must show that there exist constants $c_1,\dots,c_{\text{genus}(\widehat{\Sigma})}$
such that (we write these as functions of a parameter $z$, to clarify the notation)
\begin{gather*}
F(z)\chi_0(z) -\sum_{j=1}^{\text{genus}(\widehat{\Sigma})} c_j \chi_0^{(j)}(z) \in H^0(\Sigma),
\end{gather*}
i.e., this function does not have any poles in~$\Sigma$.
Since the set of poles of~$F\chi_0$ (which are all simple) is a subset of the set of zeros
of~$dk$, this is equivalent to showing the following.
Let $\{z_1^0,\dots,z^0_{\text{genus}(\widehat{\Sigma})}\}$ denote the set of zeros of $dk$
in $\Sigma$ (see~(b) of Remark~\ref{ltfcomments}). Then there exist unique constants $c_1,\dots,c_{\text{genus}(\widehat{\Sigma})}$
such that
\begin{gather*}
F(z)\chi_0(z) -\sum_{j=1}^{\text{genus}(\widehat{\Sigma})} c_j \chi_0^{(j)}(z)
\end{gather*}
does not have any poles at the points $z^0_1,\dots,z^0_{\text{genus}(\widehat{\Sigma})}$.

To turn this into a system of scalar equations, for each $1\le i\le \text{genus}(\widehat{\Sigma})$ choose a local coordinate $z_i$
with $z_i(z_i^0)=0$. Relative to this choice of coordinates, we can then speak of residues. Our claim is then equivalent to
the claim that the $\text{genus}(\widehat{\Sigma})\times \text{genus}(\widehat{\Sigma})$ matrix
$\operatorname{res}^j_i:=\operatorname{Res}\big(\chi_0^{(j)},z^0_i\big)$ is nonsingular.

In the vicinity of $z^0_i$,
\begin{gather*}\chi_0^{(j)}=\frac{\operatorname{res}^j_i}{z_i}+\text{holomorphic}(z_i), \qquad
dk=\big(f_1^{(i)}z_i+O\big(z_i^2\big)\big)dz_i,
\end{gather*}
where the f\/irst coef\/f\/icient $f_1^{(i)}$ is not zero, and
\begin{gather*} \chi_0^{(j)} dk= \big(\operatorname{res}_i^j f_1^{(i)}+O(z_i)\big)dz_i.
\end{gather*}
We can just as well show that the matrix
$\operatorname{res}_i^j f_1^{(i)}$ is nonsingular. In invariant terms we must show that the evaluation map
\begin{gather*}
H^{1,0}(\widehat{\Sigma})\to \bigoplus_{i=1}^{\text{genus}(\widehat{\Sigma})}T^*_{z_i}
\end{gather*}
is an isomorphism of vector spaces. A holomorphic dif\/ferential in the kernel of this map
would then have $2\text{genus}(\widehat{\Sigma})$ zeros and no poles, which is impossible
since the degree of $\kappa$ is \mbox{$2\text{genus}(\widehat{\Sigma})-2$}.
\end{proof}

In the classical case it is a (dif\/f\/icult to prove but) well-known fact that if
$\chi$ is Holder continuous of order $s>0$, and $s$ is nonintegral,
then $\chi_{\pm}$ are Holder continuous of order $s$. This is equivalent to the fact that the Hilbert transform
$\mathcal H:=iP_+-iP_-\colon \chi\to i\chi_+-i\chi_-$, which can be expressed as a principal value integral relative to the Cauchy kernel
\begin{gather}\label{Cauchy}\chi(z) \rightarrow
\frac{1}{\pi}p.v.\int_{S^1}\frac{\chi(\zeta)}{\zeta-z}d\zeta\end{gather}
is a bounded operator on $C^s(S^1)$, the Banach space of Holder continuous functions of order~$s$
(the original reference in the classical case is~\cite[Chapter~III, Section~3]{GK};
this is cited for example in \cite[p.~60]{CG}).

\begin{Theorem}\label{projcontinuity} Suppose that $s>0$ and nonintegral. In reference to
Proposition~{\rm \ref{hyperfunction}}, if $\chi\in C^s(S)$ $($i.e., Holder continuous of order $s)$,
then $\chi_{\pm}\in C^s(S)$.
\end{Theorem}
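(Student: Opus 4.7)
The plan is to reduce the statement to the classical boundedness of the Cauchy singular integral \eqref{Cauchy} on Holder classes, which is cited in the introduction. The argument proceeds in two parts: first dispense with the finite-dimensional zero mode $\chi_0$, then realize the projection $\chi \mapsto \chi_+$ as a Cauchy-type integral on $\widehat\Sigma$ whose kernel differs from the classical Cauchy kernel by a smooth remainder.

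For the zero mode, since $H_0$ has dimension $\operatorname{genus}(\widehat\Sigma)+1$, continuity of $\chi\mapsto\chi_0$ on $C^s(S)$ amounts to exhibiting that many continuous linear functionals on $C^s(S)$ whose joint kernel is $H_+\oplus H_-$. The duality lemma preceding Proposition~\ref{hyperfunction} identifies the cokernel of the map $H^0(\Sigma)\oplus H^0(\Sigma^*)\to H^0(S)$ with the dual of the $\operatorname{genus}(\widehat\Sigma)$-dimensional space of restrictions to $S$ of holomorphic one-forms on $\widehat\Sigma$, providing that many functionals. The extra functional $\chi\mapsto\frac{1}{2\pi i}\int_S \chi\,dk$ annihilates $H_+\oplus H_-$ and detects the $dk$-component of $\chi_0\,dk$. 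Each of these functionals is the pairing of $\chi$ against a real-analytic one-form on $S$, hence is manifestly continuous on $C^s(S)$. The image $\chi_0$ itself lies in the finite-dimensional space $H_0$, whose elements are meromorphic on $\widehat\Sigma$ with poles only at the zeros of $dk$; since $S=\{\tau=0\}$ is smooth, $d\tau$ cannot vanish on $S$, so the zeros of $dk$ are disjoint from $S$, and every element of $H_0$ restricts to a real-analytic function on $S$. Thus $\|\chi_0\|_{C^s(S)}\le C\|\chi\|_{C^s(S)}$, and subtracting off $\chi_0$ reduces the theorem to the case $\chi_0=0$.

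For the holomorphic pieces, now assuming $\chi=\chi_++\chi_-$, I would construct $\chi_+$ directly by a Cauchy-type integral
\begin{gather*}
\chi_+(p)=\frac{1}{2\pi i}\int_S \chi(q)\,K(p,q),\qquad p\in\Sigma^0,
\end{gather*}
where $K(p,q)$ is a meromorphic one-form in $q$ depending holomorphically on $p$, with a simple pole of residue one on the diagonal $q=p$, with singularities in $q$ otherwise confined to $\{(0),(\infty)\}$, and normalized so that the resulting function of $p$ extends holomorphically to all of $\Sigma^0$ and vanishes at $p=(0)$. Existence of such a $K$ is a Riemann--Roch argument in the spirit of the construction of $dk$ in the previous lemma: start from an Abelian differential of the third kind with simple poles of opposite residue at $q=p$ and $q=(\infty)$, then adjust by elements of $H^{1,0}(\widehat\Sigma)$ and a multiple of $dk$ to enforce the normalization in the $p$ variable. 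In any local holomorphic coordinate near a point of $S$, $K(p,q)-dq/(q-p)$ is smooth across the diagonal, so on each boundary circle the operator $\chi\mapsto\chi_+\vert_S$ differs from \eqref{Cauchy} (transported through a local conformal parametrization of that circle) by an integral operator with smooth kernel. Invoking the boundedness of \eqref{Cauchy} gives $\chi_+\vert_S\in C^s(S)$; uniqueness in Proposition~\ref{hyperfunction} identifies this $\chi_+$ with the one intended, and then $\chi_-=\chi-\chi_+\in C^s(S)$ follows.

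The main obstacle is the careful construction and normalization of the kernel $K(p,q)$: one must simultaneously arrange the diagonal residue of one, holomorphic dependence on $p\in\Sigma^0$, vanishing at $p=(0)$, and smoothness of $K(p,q)-dq/(q-p)$ across the diagonal, all while verifying that the resulting integral really is the $\chi_+$ of Proposition~\ref{hyperfunction}. An analogous kernel producing the projection onto $H_-$ (vanishing at $p=(\infty)$) must also be built. Once this bookkeeping is in place, the remainder of the proof is a direct appeal to the classical boundedness of \eqref{Cauchy} applied circle-by-circle to the components of $S$.
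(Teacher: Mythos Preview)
Your proposal is correct and follows essentially the same route as the paper: both reduce to the classical boundedness of the Cauchy singular integral on H\"older classes by observing that the relevant projection kernel is, in local coordinates on each boundary circle, the classical Cauchy kernel plus a smooth remainder. The paper's argument is a two-sentence sketch invoking the Szeg\H{o} kernel and the Hawley--Schiffer reference~\cite{HS} to assert this structure, whereas you give a more self-contained treatment: you first peel off the finite-dimensional zero-mode piece $\chi_0$ by exhibiting the relevant continuous functionals explicitly, and then you build the Cauchy-type kernel $K(p,q)$ directly by a Riemann--Roch argument rather than citing the Szeg\H{o} kernel. Your version is longer but more transparent about where the normalizations enter; the paper's version is terser but relies on the reader knowing the cited literature.
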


This is equivalent to proving the continuity of an analogue of the Hilbert transform on
$C^s(S)$. The kernel is essentially the Szego kernel, which in general is the classical Cauchy integral
operator plus a smooth perturbation
(see~\cite{HS}, and~\cite{BP2} for examples). Thus Theorem~\ref{projcontinuity} follows from
the classical case.

\section{Proof of Theorem \ref{SU(2)theorem1}}\label{SU(2)thm1}

In Theorem~\ref{SU(2)theorem1} it is obvious that~(ii) implies~(iii). This actually follows from the classical example,
because we can use $\mathfrak z$ to pull results from the sphere back to $\widehat{\Sigma}$.
It is also clear that~(iii) implies~(i): This follows immediately by just multiplying the factors together.
In the course of completing the proof of Theorem~\ref{SU(2)theorem1}, we will also
prove the following

\begin{Theorem}\label{SU(2)theorem1smooth} Suppose that $k_1 \in C^s(S,{\rm SU}(2))$,
where $s>0$ and nonintegral. The following are equivalent:
\begin{enumerate}\itemsep=0pt
\item[{\rm (I.1)}] $k_1$ is of the form
\begin{gather*}
k_1(z)=\left(\begin{matrix} a(z)&b(z)\\
-b^*(z)&a^*(z)\end{matrix} \right),\qquad z\in S,
\end{gather*} where $a,b\in
H^0(\Sigma)$ have $C^s$ boundary values, $a((0))>0$, and $a$ and $b$
do not simultaneously vanish at a point in $\Sigma$.

\item[{\rm (I.3)}] $k_1$ has a factorization of the form
\begin{gather*}\left(\begin{matrix} 1&0\\
y^*(z)+y_0(z)&1\end{matrix} \right)\left(\begin{matrix} a_1&0\\
0&a_1^{-1}\end{matrix} \right)\left(\begin{matrix} \alpha_1 (z)&\beta_1 (z)\\
\gamma_1 (z)&\delta_1 (z)\end{matrix} \right),\end{gather*} where $y\in H^0(\Sigma^0)$, $y_0$ is a zero mode, $a_1>0$, the last factor is in
$H^0(\Sigma^0,{\rm SL}(2,\mathbb C))$ and is unipotent upper triangular
at the basepoint $(0)$, and the factors
have $C^s$ boundary values.
\end{enumerate}

Similarly, the following are equivalent:
\begin{enumerate}\itemsep=0pt
\item[{\rm (II.1)}] $k_2$ is of the form
\begin{gather*}k_2(z)=\left(\begin{matrix} d^{*}(z)&-c^{*}(z)\\
c(z)&d(z)\end{matrix} \right),\qquad z\in S^1,\end{gather*} where $c,d\in
H^0(\Sigma)$ have $C^s$ boundary values, $c((0))=0$, $d((0))>0$, and
$c$ and $d$ do not simultaneously vanish at a point in $\Sigma$.

\item[{\rm (II.3)}] $k_2$ has a factorization of the form
\begin{gather*}
\left(\begin{matrix} 1&x^*(z)+x_0(z)\\
0&1\end{matrix} \right)\left(\begin{matrix} a_2&0\\
0&a_2^{-1}\end{matrix} \right)\left(\begin{matrix} \alpha_2 (z)&\beta_2 (z)\\
\gamma_2 (z)&\delta_2 (z)\end{matrix} \right),
\end{gather*} where $x\in H^0(\Sigma^0,(0);\mathbb C,0)$,
$x_0$ is a zero mode, $x_0((0))=0$ $a_2>0$, the last factor is
in $H^0(\Sigma^0,{\rm SL}(2,\mathbb C))$ and is upper triangular
unipotent at the basepoint $(0)$, and the
factors have $C^s$ boundary values.
\end{enumerate}

The triangular factorizations in {\rm (I.3)} and~{\rm (II.3)} are uniquely determined.
\end{Theorem}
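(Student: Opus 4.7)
The direction (I.3)$\Rightarrow$(I.1), and likewise (II.3)$\Rightarrow$(II.1), is immediate: multiplying out $LDU$ exhibits the entries $a_1\alpha_1,a_1\beta_1\in H^0(\Sigma)$ in the first row of $k_1$, and the hypothesis $k_1\in {\rm SU}(2)$ then forces the second row to be $(-b^*,a^*)$. The no–common-zero condition follows from $\alpha_1\delta_1-\beta_1\gamma_1\equiv 1$, and $C^s$ regularity is preserved by multiplication in the Banach algebra $C^s(S)$.

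For (I.1)$\Rightarrow$(I.3), one is forced to take $a_1:=a((0))$, $\alpha_1:=a/a_1$ and $\beta_1:=b/a_1$; these automatically satisfy the required conditions on the diagonal and holomorphic factors. Matching the remaining entries of $k_1=LDU$ reduces the construction to producing a single hyperfunction $Y=y^*+y_0$ on $S$ with $Y\in H_-\oplus H_0$ in the decomposition of Proposition~\ref{hyperfunction}, such that $Ya+b^*$ is the boundary value of a function in $H^0(\Sigma^0)$ vanishing at $(0)$, i.e., lies in $H_+$. Granted such $Y$, set $\gamma_1:=-a_1(Ya+b^*)$ and $\delta_1:=a_1(a^*-Yb)$; the identity $aa^*+bb^*\equiv 1$ on $S$ yields $\alpha_1\delta_1-\beta_1\gamma_1=1$ on $S$, hence on all of $\Sigma$ by holomorphic continuation, so that the third factor lies in $H^0(\Sigma^0,{\rm SL}(2,\mathbb C))$ and is upper triangular unipotent at $(0)$.

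The construction of $Y$ proceeds via a Bezout identity: since $a$ and $b$ do not simultaneously vanish in $\Sigma$, standard vanishing results for coherent sheaves on the compact bordered surface $\Sigma$ produce $u,v\in H^0(\Sigma)$ with $ub+av\equiv 1$; after a gauge $(u,v)\mapsto(u-a\phi,v+b\phi)$ by an appropriate $\phi\in H^0(\Sigma)$ we may arrange $u((0))=0$, whence $v((0))=1/a_1$. On $S$ set $Y:=(u-b^*)/a=(a^*-v)/b$; the two formulas agree on $S$ by combining Bezout with $aa^*+bb^*\equiv 1$, and at any common zero of the denominator and numerator a removable-singularity argument shows that $Y$ extends to a $C^s$ function on all of $S$. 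Verifying $Y\in H_-\oplus H_0$ amounts to showing that the $H_+$-component of $Y$ vanishes, which is done by invoking Lemma~\ref{stabilitybymultiply} (multiplication by $a\in H^0(\Sigma)$ preserves $H_+\oplus H_0$) in conjunction with the identity $aY+b^*=u\in H_+$: this pins down the $H_-$- and $H_0$-components of $aY$ in terms of those of $-b^*$, and the normalisation $u((0))=0$ together with the no–common-zero hypothesis then forces $P_+Y=0$. The $C^s$ regularity of each constructed factor follows from the continuity of the Krichever–Novikov projections on $C^s(S)$ asserted in Theorem~\ref{projcontinuity}.

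Uniqueness is a consequence of the direct-sum decomposition of Proposition~\ref{hyperfunction}. Given two factorizations, form $U_1U_2^{-1}=D_1^{-1}L_1^{-1}L_2 D_2$; the right-hand side is lower triangular with $(1,2)$-entry zero and diagonal $(a_1^{(2)}/a_1^{(1)},a_1^{(1)}/a_1^{(2)})$. Since $U_1U_2^{-1}$ is upper triangular unipotent at $(0)$ with $\det\equiv 1$, equating these structures forces $a_1^{(1)}=a_1^{(2)}$ and identifies the $(2,1)$-entry as an element of $H^0(\Sigma)$ vanishing at $(0)$ (hence in $H_+$) which on $S$ equals $a_1^{2}(Y_2-Y_1)$; combined with $Y_1-Y_2\in H_-\oplus H_0$ and $H_+\cap(H_-\oplus H_0)=0$, this forces $Y_1=Y_2$ and hence the whole factorization. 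The (II) case is handled by a parallel argument with $c((0))=0$ replacing $a((0))>0$ in the setup. The main technical obstacle throughout is the verification that $P_+Y=0$ in the presence of interior zeros of $a$ in $\Sigma$: this is precisely the point where the interplay of Lemma~\ref{stabilitybymultiply}, the normalisation of the Bezout pair, and the simultaneous non-vanishing hypothesis bears the full weight of the proof.
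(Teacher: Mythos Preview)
Your Bezout approach is genuinely different from the paper's. The paper works operator-theoretically: it defines $T\colon\mathcal H_-\to\mathcal H_-\oplus\mathcal H_-$, $x_0+x^*\mapsto\big((c(x_0+x^*))_-,(d(x_0+x^*))_-\big)$, shows $T$ is injective with closed range (as the restriction of the Fredholm operator $D(k_2)^*$, using the no-common-zero hypothesis for injectivity), computes $T^*$, and proves the target $((d^*)_-,-c^*)$ lies in $\ker(T^*)^\perp$ via a residue argument. This yields existence and uniqueness in one stroke and requires no Bezout input; your route is more constructive but needs a separate uniqueness argument (which you supply correctly).

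However, your key step has a real gap. From $aY+b^*=u\in H^0(\Sigma)$ you \emph{cannot} deduce $P_+Y=0$. Lemma~\ref{stabilitybymultiply} says multiplication by $a\in H^0(\Sigma)$ preserves $H_0\oplus H_+$, but it does not preserve $H_-$, so the decomposition $aY=aY_++aY_0+aY_-$ does not separate along $H_+\oplus H_0\oplus H_-$ and your inference collapses. In fact $P_+Y$ depends on the Bezout pair: replacing $(u,v)$ by $(u-a\phi,v+b\phi)$ shifts $Y$ by $\phi\in H^0(\Sigma)$, and the single normalisation $u((0))=0$ pins down only $\phi((0))$, not the $H_+$-part of $\phi$. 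For a generic choice your $Y$ simply has $P_+Y\ne0$.

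The fix is to \emph{define} $Y':=Y-P_+Y\in H_0\oplus H_-$ rather than argue $P_+Y=0$. Then $aY'+b^*=u-aP_+Y\in H^0(\Sigma)$ still vanishes at $(0)$, and on $S$ one has $a^*-Y'b=(a^*-Yb)+(P_+Y)b=v+(P_+Y)b\in H^0(\Sigma)$ (the identity $a^*-Yb=v$ follows from combining $ub+av=1$ with $aa^*+bb^*=1$). Hence $\gamma_1,\delta_1$ built from $Y'$ remain holomorphic in $\Sigma$ with the correct values at $(0)$, and $C^s$ regularity of $Y'$ comes from Theorem~\ref{projcontinuity}. You should also say a word about why the Bezout pair can be taken in $H^0(\Sigma^0)\cap C^s(\bar\Sigma)$ (e.g., solve on a slight open enlargement of $\Sigma$); this is routine but not automatic.
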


\begin{Remark} When $k_2$ is the restriction to $S$ of a function with entries in $\mathcal M^0$, the determinant condition
$c^*c+d^*d=1$ can be interpreted as an equality of functions in~$\mathcal M^0$. Together with $d((0))>0$, this implies
that $c$ and $d$ do not simultaneously vanish. Thus the simultaneous vanishing
hypotheses in~(I.1) and~(II.1) of Theorem~\ref{SU(2)theorem1smooth} are superf\/luous in that case.
\end{Remark}

\begin{proof} The two sets of conditions are proven in the same way. It is obvious that~(II.3) implies~(II.1).
The dif\/f\/icult task is to show that~(II.1) implies~(II.3).

Consider the polarization
\begin{gather*}
\Omega^0\big(S,\mathbb C^2\big)= \mathcal H_+\otimes \mathbb C^2\oplus \mathcal H_-\otimes C^2,
\end{gather*}
where $\mathcal H_+=H_++\mathbb C$ (functions which are holomorphic in~$\Sigma$),
and $\mathcal H_-$ is the sum of $H_-$ and the subspace of zero modes $x_0\in H_0$
such that $x_0dk$ is a global holomorphic dif\/ferential (i.e., $x_0((0))=0$,
as in the statement of~(II.3)). We write $P_{\pm}$ for the corresponding projections.
We will view $\Omega^0(S,\mathbb C^2)$ as a preHilbert space, by using
the measure induced by the restriction of $(2\pi i)^{-1}dk$ to~$S$. (Note that for a dif\/ferential $\omega$,
if $\omega^*:=\overline{R^*\omega}$, then $(dk)^*=-dk$, as follows by checking the asymptotics at $(0)$ and $(\infty)$.
Thus along $S$, $(2\pi i)^{-1}dk$ is real. It is also nonvanishing; see~(a) of Remark~\ref{ltfcomments}.
The polarization is an orthogonal direct sum.)

Relative to this polarization,
we write the unitary multiplication operator corresponding to a multiloop $g\colon S\to {\rm SU}(2)$ as
\begin{gather*}
M_g=\left(\begin{matrix}A&B\\C&D\end{matrix}\right).
\end{gather*}
For a smooth loop the of\/f diagonals (Hankel type operators) will be small and $A$ and $D$ will be Fredholm.

We must show that the multiloop $k_2$ has a unique factorization as
in~(II.3), i.e., we must solve for $a_2$, $x^*$, and so on, in
\begin{gather*}
\left(\begin{matrix}d^*&-c^*\\c&d\end{matrix}\right)=\left(\begin{matrix}1&x^*+x_0\\0&1\end{matrix}\right)
\left(\begin{matrix}a_2&0\\0&a_2^{-1}\end{matrix}\right)
\left(\begin{matrix}\alpha_2&\beta_2\\\gamma_2&\delta_2\end{matrix}\right).
\end{gather*}
The second row implies
\begin{gather*}(c,d)=a_2^{-1}(\gamma_2,\delta_2).
\end{gather*}
This determines $a_2^{-1}\gamma_2$ and $a_2^{-1}\delta_2$, and (using the unipotence of the third factor in~(II.3)),
\mbox{$a_2^{-1}=d((0))$}.

The f\/irst row implies
\begin{gather}\label{eqn1}
d^*=a_2\alpha_2+(x^*+x_0)c,\qquad -c^*=a_2\beta_2+(x^*+x_0)d.
\end{gather}
The f\/irst term on the right-hand side of both of these equations belongs to~$\mathcal H_+$. To
solve for~$x^*+x_0$, we will apply a projection to get rid of these~$\mathcal H_+$ terms.

Consider the operator
\begin{gather*}
T\colon \ \mathcal H_-\to \mathcal H_-\oplus \mathcal H_-\colon \ x_0+x^{*}\to ((c(x_0+x^{*}))_{-},(d(x_0+x^{*}))_{
-}),
\end{gather*} where $(\cdot)_{-}$ is the projection to $\mathcal H_-$. To show that we can uniquely solve for
$x^*+x_0$, we will show $T$ is injective and that $((d^*)_{-},(-c^*)_{-})$ is in the range of~$T$.

The operator $T$ is the restriction of the Fredholm operator $D(k_2)^*=D(k_2^*)$
to the subspace $\{(x_0+x^*,0)\}$, consequently the image of $T$ is closed. $T$ is also injective.
For suppose that both $((c(x_0+x^{*}))_{-}$ and $(d(x_0+x^{*}))_{
-}$ vanish. Then
\begin{gather*}
c(x_0+x^{*})=g \qquad \text{and} \qquad d(x_0+x^{*})=h,
\end{gather*}
where $g,h\in \mathcal H_+$. Since $c$, $d$ do not simultaneously vanish
in~$\Sigma$, this implies that $x_0+x^*\in \mathcal H_+$. But this means that $x_0+x^*$ must vanish.
Thus~$T$ is injective and has a closed image.

The adjoint of $T$ is given by
\begin{gather*}T^{*}\colon \ \mathcal H_-\oplus \mathcal H_-\to \mathcal H_-\colon \ (f_0+f^{*},g_0+g^{*})\to  (c^{*}(f_0+f^{*})+d^{
*}(g_0+g^{*}) )_{-}.
\end{gather*}

If $(f_0+f^{*},g_0+g^{*})\in \operatorname{ker}(T^{*})$, then
\begin{gather*}
c^{*}f^{*}+d^{*}g^{*}+ (c^{*}f_0+d^{
*}g_0 )_{-}=0.
\end{gather*}
By Lemma \ref{stabilitybymultiply}
\begin{gather*}
(c^{*}f_0+d^{*}g_0 )_{-}=c^{*}f_0+d^{*}g_0.
\end{gather*}
Thus
\begin{gather*}
c^{*}(f_0+f^{*})+d^{*}(g_0+g^{*})=0
\end{gather*} viewed as a meromorphic function in~$\Sigma^*$,
vanishes in the closure of $\Sigma^{*}$. Because $\vert c\vert^2+\vert d\vert^ 2=1$ around~$S$,
\begin{gather}\label{keyequality}
(f_0+f^{*},g_0+g^{*})=\lambda^{*}(d^{*},-c^{*}),
\end{gather} where $\lambda^{*}$ is
meromorphic in $ \Sigma^{*}$ and vanishes at $(\infty)$ because
$d^{*}((\infty))=d((0))>0$. Thus $\lambda$ is meromorphic in $\Sigma$ and vanishes at $(0)$.

We now claim that $((d^{*})_{-},-c^{*})\in
\operatorname{ker}(T^{*})^{\perp}$. To prove this, suppose that $(f_0+f^{*},g_0+g^{*})\in \operatorname{ker}(T^{*})$, as in
the previous paragraph. Then
 \begin{gather*}\int_S ((d^{*})_{-}(f_0^*+f)+(-c^{*})(g_0^*+g))dk
=\int_S\lambda (d^{*}d+c^{*} c)dk =\int_S\lambda dk.
 \end{gather*} (Note that constants are orthogonal to $\mathcal H_-$, and hence
we could replace~$(d^*)_-$ by~$d^*$.) We claim this integral vanishes.
Since~$\lambda dk$ is a meromorphic dif\/ferential in $\Sigma$ (by the previous paragraph), this equals the sum of
residues of $\lambda dk$ in $\Sigma$. Because $f_0^* dk$ and $g_0^* dk$ are holomorphic dif\/ferentials (the def\/ining
characteristic for zero modes),
and $c$ and $d$ do not simultaneously vanish, the only point we need to worry about is $(0)$ (see~(\ref{keyequality})).
Finally the residue at $(0)$ is zero, because $\lambda((0))=0$ (by the previous paragraph).

Because $T$ has closed image, there
exists $x_0+x^{*}\in \mathcal H_{-}$ such that
\begin{gather*}
(d^{*})_{-}=((x_0+x^{*})c)_{-}\qquad \text{and}\qquad
-c^{*}=((x_0+x^{*})d)_{-}.\end{gather*} We can now solve for $a_2\alpha_2$
and $a_2\beta_2$ in~(\ref{eqn1}). We previously noted that $a_2^{-1}=d((0))$.
Now that we have solved for the factors, the form of the factorization immediately
implies $\alpha_2\delta_2-\gamma_2\beta_2=1$
on~$S$; by holomorphicity of the terms, this also holds in $\Sigma$.

Theorem~\ref{projcontinuity} implies that the projections $P_{\pm}$ are continuous on~$C^s(S)$. Consequently the operator $T$ will be continuous and have the same properties on
the $C^s$ completions. Hence when $k_2\in C^s$, the factors are $C^s$. This
completes the proofs of Theorems~\ref{SU(2)theorem1} and~\ref{SU(2)theorem1smooth}.
\end{proof}

\subsection{A generalization of Theorem~\ref{SU(2)theorem1}}\label{thm1refinement}

In the preceding proof a key step is to show that $\lambda dk$ is a holomorphic dif\/ferential
in $\Sigma$. The proof is not sharp, in the sense that $\lambda$ itself is holomorphic,
whereas for the proof to go through, we can allow $\lambda$ to have simple poles at the zeros of $dk$.
By considering~(\ref{keyequality}), this suggests
that there ought to be a generalization in which $c$ and $d$ are allowed to have simultaneous zeros to f\/irst
order, within the zero set of $dk$. This in turn forces $a_2$ to be a function. It turns out to be natural
for $a_1$ and $a_2$ to be functions, for other reasons, as we will explain in Section~\ref{generic}.

\begin{Theorem}\label{SU(2)theorem1smooth2} Suppose that $k_1 \in C^s(S,{\rm SU}(2))$,
where $s>0$ and nonintegral. The following are equivalent:
\begin{enumerate}\itemsep=0pt
\item[{\rm (I.1)}] $k_1$ is of the form
\begin{gather*}
k_1(z)=\left(\begin{matrix} a(z)&b(z)\\
-b^*(z)&a^*(z)\end{matrix} \right),\qquad z\in S,
\end{gather*} where $a,b\in
H^0(\Sigma)$ have $C^s$ boundary values, $a((0))>0$, and~$a$ and $b$
can simultaneously vanish only to first order, and this set of common zeros is the set of
poles in $\Sigma$ of a zero mode.

\item[{\rm (I.3)}] $k_1$ has a factorization of the form
\begin{gather*}
\left(\begin{matrix} 1&0\\
y^*(z)+y_0(z)&1\end{matrix} \right)\left(\begin{matrix} a_1(z)&0\\
0&a_1(z)^{-1}\end{matrix} \right)\left(\begin{matrix} \alpha_1 (z)&\beta_1 (z)\\
\gamma_1 (z)&\delta_1 (z)\end{matrix} \right),
\end{gather*}
where $a_1^{-1}$ is a nonvanishing sum
of a zero mode and a holomorphic function in $\Sigma$, $a_1((0))>0$,
$y\in H^0(\Sigma^0)$, the last factor is in
$H^0(\Sigma^0,{\rm SL}(2,\mathbb C))$ and is unipotent upper triangular
at the basepoint $(0)$, and the factors
have $C^s$ boundary values.
\end{enumerate}

Similarly, the following are equivalent:
\begin{enumerate}\itemsep=0pt
\item[{\rm (II.1)}] $k_2$ is of the form
\begin{gather*}k_2(z)=\left(\begin{matrix} d^{*}(z)&-c^{*}(z)\\
c(z)&d(z)\end{matrix} \right),\qquad z\in S^1,\end{gather*} where $c,d\in
H^0(\Sigma)$ have $C^s$ boundary values, $c((0))=0$, $d((0))>0$, and $c$ and $d$
can simultaneously vanish only to first order, and this set of common zeros is the
set of poles in~$\Sigma$ of a~zero mode.

\item[{\rm (II.3)}] $k_2$ has a factorization of the form
\begin{gather*}
\left(\begin{matrix} 1&x^*(z)+x_0(z)\\
0&1\end{matrix} \right)\left(\begin{matrix} a_2(z)&0\\
0&a_2(z)^{-1}\end{matrix} \right)\left(\begin{matrix} \alpha_2 (z)&\beta_2 (z)\\
\gamma_2 (z)&\delta_2 (z)\end{matrix} \right),
\end{gather*}
 where $a_2$ is a nonvanishing sum
of a zero mode and a holomorphic function in $\Sigma$, \mbox{$a_2((0)){>}0$},
$x\in H^0(\Sigma^0,(0);\mathbb C,0)$, $x_0$ is a zero mode, the last factor is
in $H^0(\Sigma^0,{\rm SL}(2,\mathbb C))$ and is upper triangular
unipotent at the basepoint $(0)$, and the
factors have $C^s$ boundary values.
\end{enumerate}
\end{Theorem}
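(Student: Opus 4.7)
The plan is to mirror the proof of Theorem~\ref{SU(2)theorem1smooth}, modifying the operator-theoretic analysis to accommodate the controlled simultaneous vanishing of $c, d$ (and analogously of $a, b$). I focus on the implication (II.1) $\Rightarrow$ (II.3); the converse is direct matrix multiplication, and the case for $k_1$ is parallel. The first step is to read off the form of $a_2$ from the second row of the factorization: $(c, d) = a_2^{-1}(\gamma_2, \delta_2)$ with $(\gamma_2, \delta_2)$ nonvanishing and holomorphic on $\Sigma$, and $(\gamma_2, \delta_2)((0)) = (0, 1)$ by the unipotence at the basepoint. Since the hypothesis identifies the simple simultaneous zeros of $c, d$ with the pole set in $\Sigma$ of a prescribed zero mode $\mu$, we conclude that $a_2^{-1}$ is holomorphic on $\Sigma$ with simple zeros exactly at those points and $a_2^{-1}((0)) = d((0)) > 0$. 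Inverting, $a_2$ is meromorphic on $\Sigma$ with simple poles supported in the pole set of $\mu$, hence a nonvanishing sum of a zero mode and a holomorphic function, with $a_2((0)) > 0$.

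Next, I set up the operator $T\colon \mathcal{H}_- \to \mathcal{H}_- \oplus \mathcal{H}_-$ by $T(f) = ((cf)_-, (df)_-)$ as in the proof of Theorem~\ref{SU(2)theorem1smooth}, with the polarization possibly adjusted so that the zero-mode component of $\mathcal{H}_-$ excludes modes with poles at the reflected points $\{R(p_i)\} \subset \Sigma^*$. Applying $(\cdot)_-$ to the rows of the factorization reduces the existence problem to showing $((d^*)_-, -c^*)$ lies in the image of $T$. This image is closed ($T$ is the restriction of the Fredholm operator $D(k_2^*)$), so it suffices to verify $((d^*)_-, -c^*) \in \ker(T^*)^{\perp}$. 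The adjoint computation goes through verbatim and yields, for each element of $\ker T^*$, the identity $(f_0+f^*, g_0+g^*) = \lambda^*(d^*, -c^*)$ on $\Sigma^*$ with $\lambda^*$ meromorphic and vanishing at $(\infty)$. The key sharpening is that $(d^*, -c^*)$ now has simple zeros at each $R(p_i)$, so that $\lambda$ on $\Sigma$ is permitted at most simple poles at $\{p_i\}$; but these points are by hypothesis simple zeros of $dk$, so the product $\lambda\, dk$ remains a holomorphic differential on $\Sigma$ and $\int_S \lambda\, dk = \sum_{\Sigma} \operatorname{Res}(\lambda\, dk) = 0$ exactly as before, which is precisely the sharpening flagged in the remark at the start of Section~\ref{thm1refinement}. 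Boundedness of the projections on $C^s(S)$ (Theorem~\ref{projcontinuity}) then transfers the $C^s$ regularity to the individual factors, and $\alpha_2\delta_2 - \beta_2\gamma_2 = 1$ on $S$ extends to $\Sigma$ by holomorphicity.

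The main obstacle I expect is the careful bookkeeping of pole orders for $\lambda$: one must verify that the adjustments to $\mathcal{H}_\pm$ are internally consistent, compatible with the target $((d^*)_-, -c^*)$, and ensure $\lambda$ acquires only simple (not higher order) poles at $\{p_i\}$, using that the common-zero divisor really does arise as the polar divisor in $\Sigma$ of a zero mode in the Krichever--Novikov sense of Proposition~\ref{hyperfunction} and Remark~\ref{ltfcomments}(b). Uniqueness of the factorization is not asserted in Theorem~\ref{SU(2)theorem1smooth2}; indeed $T$ will in general have a nontrivial kernel, reflecting the freedom to redistribute a zero-mode summand between the diagonal factor $a_2$ and the unipotent factor $x^* + x_0$.
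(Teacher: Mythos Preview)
Your outline correctly identifies the key sharpening flagged at the start of Section~\ref{thm1refinement}, and your handling of $a_2$ from the second row is fine. But the reduction ``applying $(\cdot)_-$ to the rows of the factorization reduces the existence problem to showing $((d^*)_-, -c^*)$ lies in the image of $T$'' breaks down exactly where the new difficulty sits. In the first-row equations $d^*=a_2\alpha_2+(x^*+x_0)c$ and $-c^*=a_2\beta_2+(x^*+x_0)d$, the terms $a_2\alpha_2$ and $a_2\beta_2$ are \emph{not} in $\mathcal H_+$ for the polarization of Theorem~\ref{SU(2)theorem1smooth}: since $a_2$ now has simple poles at the common zeros $\{p_i\}$ (which are zeros of $dk$), Lemma~\ref{stabilitybymultiply} gives $a_2\alpha_2,\,a_2\beta_2\in H_0+H_+$, and the $H_0$-component has poles precisely at the $p_i$. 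Such a zero mode sits in $\mathcal H_-$, not $\mathcal H_+$, so $(a_2\alpha_2)_-\neq 0$ and the projection does not isolate the unknown $x^*+x_0$. Your proposed adjustment (``exclude zero modes with poles at the $R(p_i)$'') is aimed at the wrong side: the obstructing zero-mode component of $a_2\alpha_2$ has its poles at the $p_i$ in $\Sigma$, not at the reflected points, so that adjustment does not push $a_2\alpha_2$ into $\mathcal H_+$. And if you instead enlarge $\mathcal H_+$ to contain all zero modes with poles at the $p_i$, then $x_0$ itself (which is a zero mode) may no longer live in $\mathcal H_-$, so you can no longer solve for $x^*+x_0$ in one stroke via $T$.

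The paper's proof resolves this by a two-stage argument that your outline is missing. First, one replaces the polarization by $\mathcal H_+=H_++H_0+\mathbb C$, $\mathcal H_-=H_-$; with this choice $a_2\alpha_2,\,a_2\beta_2\in\mathcal H_+$ (Lemma~\ref{stabilitybymultiply}), so $T\colon H_-\to H_-\oplus H_-$, $x^*\mapsto((cx^*)_-,(dx^*)_-)$, really does isolate $x^*$ alone. The $\ker(T^*)$ analysis then goes through with $\lambda$ acquiring at most simple poles at the $p_i$ and $\lambda\,dk$ holomorphic, as you anticipated. But this only yields $x^*$; a separate operator
\[
\mathcal T\colon (H_++\mathbb C)\oplus H_0\oplus H_+\to (H_++H_0)\oplus(H_++H_0),\qquad (\alpha_2,x_0,\beta_2)\mapsto(a_2\alpha_2+x_0c,\ a_2\beta_2+x_0d)
\]
is then needed to solve simultaneously for $\alpha_2$, $x_0$, $\beta_2$, by showing that $(d^*-x^*c,\,-c^*-x^*d)$ lies in the image of $\mathcal T$ via a second closed-range/orthogonality argument. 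Without this second step (or an equivalent device) the proof is incomplete: you have produced $x^*$ but not $x_0$, $\alpha_2$, $\beta_2$. Your remark that $T$ may have nontrivial kernel is correct and is indeed the source of nonuniqueness, but that observation does not substitute for the missing construction.
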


\begin{Remark}\label{exist2remarks}\quad
\begin{enumerate}\itemsep=0pt
\item[(a)] In contrast to Theorem \ref{SU(2)theorem1}, we are not asserting that the triangular
factorizations in~(I.3) and~(II.3) are uniquely determined. In particular, as the theorem is stated, $a_1$~and~$a_2$ can be multiplied
by nonvanishing holomorphic functions in $\Sigma$.

\item[(b)] In the preceding theorem $a_1^{-1}$ and $a_2$ are not generally zero modes. Nonconstant zero modes always vanish at some point in $\Sigma$,
and this would mean that for example~$c$ or~$d$ would
have a pole. We want to avoid poles, because for example the results in Section~\ref{SU(2)thm3} are dependent
on the assumption that~$a$,~$b$,~$c$,~$d$ are holomorphic in~$\Sigma$.
\end{enumerate}
\end{Remark}

\begin{proof} The two sets of conditions are proven in the same way. It is obvious that~(II.3) implies~(II.1).
As before, the dif\/f\/icult task is to show that~(II.1) implies~(II.3).

Assuming~(II.1), we must show that the multiloop $k_2$ has a factorization as
in~(II.3), i.e., we must prove existence of $a_2$, $x^*$, and so on, in
\begin{gather*}
\left(\begin{matrix}d^*&-c^*\\c&d\end{matrix}\right)=\left(\begin{matrix}1&x^*+x_0\\0&1\end{matrix}\right)
\left(\begin{matrix}a_2&0\\0&a_2^{-1}\end{matrix}\right)
\left(\begin{matrix}\alpha_2&\beta_2\\\gamma_2&\delta_2\end{matrix}\right).
\end{gather*}
The second row implies
\begin{gather*}
(c,d)=a_2^{-1}(\gamma_2,\delta_2).
\end{gather*}
This determines $a_2^{-1}\gamma_2$ and $a_2^{-1}\delta_2$. In the context of the previous subsection, i.e., when~$c$ and~$d$ do not simultaneously vanish, $a_2$
is a constant, determined by the unipotence of the third factor in~(II.3), which (in general) implies $a_2^{-1}((0))=d((0))$.
In general, towards determining~$a_2$, note that $a_2$ must have simple poles at
the common zeros for~$c$ and~$d$, and cannot have zeros in~$\Sigma$, so that~$\gamma_2$ and~$\delta_2$ are holomorphic and do not simultaneously vanish in~$\Sigma$. The hypotheses of~(II.1) guarantee that there is a zero mode with the appropriate (necessarily
simple) poles in~$\Sigma$. Fix a slight open enlargement of~$\Sigma$. Since this surface is open, we can f\/ind a nonvanishing meromorphic function in this enlargement with the same singular behavior as this zero mode; the dif\/ference between this meromorphic function,
which is our choice for~$a_2$, and the given zero mode is holomorphic in~$\Sigma$. As we remarked above, this choice is far from uniquely determined, and it is not clear how to pin down a preferred choice.

The f\/irst row implies
\begin{gather*}
d^*=a_2\alpha_2+(x^*+x_0)c,\qquad -c^*=a_2\beta_2+(x^*+x_0)d.
\end{gather*}
The basic complication, compared to the proof of Theorem~\ref{SU(2)theorem1}, is that $a_2\alpha_2 $ and $a_2\beta_2$ are
no longer necessarily holomorphic in~$\Sigma$, because~$a_2$ has poles. We will f\/irst solve (not necessarily uniquely)
for~$x^*$, by using the same
strategy as in the proof of Theorem~\ref{SU(2)theorem1}, but using a dif\/ferent polarization.

Consider the polarization
\begin{gather*}\Omega^0\big(S,\mathbb C^2\big)= \mathcal H_+\otimes C^2\oplus \mathcal H_-\otimes C^2,
\end{gather*}
where $\mathcal H_+$ is now the sum of functions which are holomorphic in $\Sigma$ and zero modes, and \mbox{$\mathcal H_-=H_-$}. We write~$P_+$, and $P_-$ for the projections
onto the subspaces $\mathcal H_+\otimes \mathbb C^2$ and $H_-\otimes C^2$, respectively.

Relative to this polarization,
we write the multiplication operator def\/ined by a multiloop as
\begin{gather*}M_g=\left(\begin{matrix}A&B\\C&D\end{matrix}\right).
\end{gather*}
For a smooth loop the of\/f diagonals (Hankel type operators) will be small and~$A$ and~$D$ will be Fredholm.

Consider the operator
\begin{gather*}
T\colon \ \mathcal H_-\to \mathcal H_-\oplus \mathcal H_-\colon \  x^{*}\to (((cx^{*})_{-},(dx^{*})_{
-}),
\end{gather*} where $(\cdot)_{-}$ is shorthand for the projection to $\mathcal H_-$.
The operator $T$ is the restriction of the Fredholm operator $D(k_2)^*=D(k_2^*)$
to the subspace $\{(x^*,0)\}$, consequently the image of $T$ is closed. Unfortunately in general $T$ is not injective.
For suppose that both $((cx^{*})_{-}$ and $(dx^{*})_{-}$ vanish. Then
\begin{gather*}cx^{*}=g \qquad \text{and} \qquad dx^{*}=h,
\end{gather*} where $g,h\in \mathcal H_+$. At a common simple zero for~$c$ and~$d$, it could happen
that~$g$,~$h$ also have a~common pole, so~$x^*$ does not necessarily have simple poles.

The adjoint of $T$ is given by
\begin{gather*}
T^{*}\colon \ \mathcal H_-\oplus \mathcal H_-\to \mathcal H_-\colon \ (f^{*},g^{*})\to \left(c^{*}f^{*}+d^{
*}g^{*}\right)_{-}.
\end{gather*}

If $(f^{*},g^{*})\in \operatorname{ker}(T^{*})$, then
\begin{gather*}c^{*}f^{*}+d^{*}g^{*}=0\end{gather*}
viewed as a meromorphic function in $\Sigma^*$. Because $\vert c\vert^2+\vert d\vert^ 2=1$ around~$S$,
\begin{gather*}
(f^{*},g^{*})=\lambda^{*}(d^{*},-c^{*}),
\end{gather*} where $\lambda^{*}$ is
meromorphic in $ \Sigma^{*}$ and vanishes at $(\infty)$ because
$d^{*}((\infty))=d((0))>0$. Thus $\lambda$ is meromorphic in~$\Sigma$ and vanishes at~$(0)$.

We now claim that $((d^{*})_{-},-c^{*})\in
\operatorname{ker}(T^{*})^{\perp}$. To prove this, suppose that $(f^{*},g^{*})\in \operatorname{ker}(T^{*})$, as in
the previous paragraph. Then
 \begin{gather*}
 \int_S ((d^{*})_{-}(f)+(-c^{*})(g))dk
=\int_S\lambda (d^{*}d+c^{*} c)dk =\int_S\lambda dk.
\end{gather*} We claim this integral vanishes.
Since $\lambda dk$ is a meromorphic dif\/ferential in $\Sigma$ (by the previous paragraph), this equals the sum of
residues of~$\lambda dk$ in~$\Sigma$. Because the poles of $\lambda$ occur at common zeros of~$c$,~$d$, and hence only at zeros of~$dk$,
and because $\lambda((0))=0$, $\lambda dk$ is holomorphic in $\Sigma$ (by the previous paragraph). Thus the integral vanishes.

Because $T$ has closed image, there
exists $x^{*}\in \mathcal H_{-}$ such that
\begin{gather*}
(d^{*})_{-}=(x^{*}c)_{-} \qquad \text{and}\qquad
-c^{*}=(x^{*}d)_{-}.\end{gather*}
(Here and below we are using the fact that $a_2\in H_0+H_+$, $\alpha_2\in H_+$,
and Lemma~\ref{stabilitybymultiply}, to conclude that $a_2\alpha_2\in H_0+H_+$, so that it is killed by~$(\cdot)_-$.)

Now consider the operator
\begin{gather*}
\mathcal T\colon \ (H_++\mathbb C)\oplus H_0\oplus H_+\rightarrow (H_++H_0)\oplus (H_++H_0)\colon\\
 \hphantom{\mathcal T\colon} \ (\alpha_2,x_0,\beta_2)\to (a_2\alpha_2+x_0c,a_2\beta_2+x_0d).
\end{gather*}
This is well-def\/ined by Lemma~\ref{stabilitybymultiply}.

We must show that
$(d^*-x_0c,-c^*-x_0d)$ (which is in the target of $\mathcal T$ by the previous paragraph) is in the image of~$\mathcal T$.
The operator~$\mathcal T$ has a closed image. The (Hilbert space) adjoint of~$\mathcal T$ is given by
\begin{gather}
(H_++H_0)\oplus (H_++H_0)\to (H_++\mathbb C)\oplus H_0\oplus H_+\colon \nonumber\\
 (f,g)\to ((a_2^*f)_{0+},(a_2^*g)_{0+},(c^*f+d^*g)_0).\label{adjointformula2}
\end{gather}

We need to show that $(d^*-x_0c,-c^*-x_0d)$ is orthogonal to the kernel of $\mathcal T^*$.
Suppose that $(f,g)\in \operatorname{ker}(\mathcal T^*)$ (so the three terms in~(\ref{adjointformula2}) vanish). Then
\begin{gather*}
(d^*-x_0c,f)+(-c^*-x_0d,g)=\text{const}\cdot \int_S(df^*-cg^*-x_0^*(c^*f+d^*g))dk.
\end{gather*}
The last two terms immediately drop out. Thus, up to a constant, this equals
\begin{gather*}
\int_S(df^*-cg^*)dk=\int_S\big(\big(da_2^{-1}\big)a_2f^*-\big(ca_2^{-1}\big)a_2g^*\big)dk.
\end{gather*}
The vanishing of the f\/irst two terms in~(\ref{adjointformula2}) imply that $a_2f^*,a_2g^*\in H_+$.
Thus the integral vanishes since the integrand is holomorphic in~$\Sigma$.

Once we have solved for the factors, the form of the factorization immediately implies $\alpha_2\delta_2-\gamma_2\beta_2=1$
on~$S$; by holomorphicity of the terms, this also holds in~$\Sigma$. This completes the proof.
\end{proof}

\section{Factorization and semistability}\label{SU(2)thm2}

\subsection{Proof of Theorem~\ref{SU(2)theorem2}}

We recall the statement of the theorem:

\begin{Theorem} Suppose $g\in C^{\infty}(S,{\rm SU}(2))$. If~$g$ has a factorization
\begin{gather*}
g(z)=k_1^*(z)\left(\begin{matrix} e^{\chi(z)}&0\\
0&e^{-\chi(z)}\end{matrix}\right)k_2(z),
\end{gather*} where $\chi \in
C^{\infty}(S,i\mathbb R)$, and $k_1$ and $k_2$ are as in {\rm (I.1)} and {\rm (II.1)}, respectively, of Theorem~{\rm \ref{SU(2)theorem1}},
then~$E(g)$, the holomorphic $G$ bundle on $\widehat{\Sigma}$
defined by~$g$ as a transition function, is semistable, and the associated bundle for the defining representation
has a sub-line bundle with an antiholomorphic reflection symmetry compatible with~$R$.
\end{Theorem}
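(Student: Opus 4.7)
The plan is to build, directly from the factorization, a holomorphic sub-line bundle $L\subset E(g)$ of degree zero that carries the desired reflection symmetry, and then to deduce semistability from the existence of this particular~$L$.

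The key observation is that on~$S$ the factorization yields
\begin{gather*}
g\cdot\bigl(k_2^{-1}e_1\bigr)=k_1^{*}\left(\begin{matrix} e^{\chi}&0\\ 0&e^{-\chi}\end{matrix}\right)e_1=e^{\chi}\bigl(k_1^{*}e_1\bigr).
\end{gather*}
Since $k_2\in {\rm SU}(2)$ on~$S$, one has $k_2^{-1}e_1=(d,-c)^{T}$; by (II.1) the entries $c,d$ extend holomorphically to~$\Sigma$ without a common zero, so this vector defines a nowhere-vanishing holomorphic section $v_{\Sigma}$ of $\Sigma\times\mathbb{C}^{2}$. Similarly $k_1^{*}e_1=(a^{*},b^{*})^{T}$, and by (I.1) the functions $a^{*},b^{*}$ extend holomorphically to~$\Sigma^{*}$ without a common zero, giving a nowhere-vanishing holomorphic section $v_{\Sigma^{*}}$ of $\Sigma^{*}\times\mathbb{C}^{2}$. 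The identity $g\cdot v_{\Sigma}=e^{\chi}v_{\Sigma^{*}}$ on~$S$ says exactly that the line $\mathbb{C}\cdot v_{\Sigma}$ on~$\Sigma$ and the line $\mathbb{C}\cdot v_{\Sigma^{*}}$ on~$\Sigma^{*}$ glue, under the transition~$g$ of~$E(g)$, into a holomorphic sub-line bundle $L\subset E(g)$ whose scalar transition function on~$S$ is~$e^{\chi}$.

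Next I compute $\deg L$. Because $\chi\in C^{\infty}(S,i\mathbb{R})$ is single-valued with values in the simply connected group~$i\mathbb{R}$, the loop $e^{\chi}\colon S\to U(1)$ has winding number zero on each component of~$S$, so
\begin{gather*}
\deg L=\frac{1}{2\pi i}\int_{S}d\chi=0.
\end{gather*}
Since $\deg E(g)=0$ (as $g$ is ${\rm SL}(2,\mathbb{C})$-valued), the quotient $E(g)/L$ is also a line bundle of degree zero. For any other sub-line bundle $L'\subset E(g)$, the composite $L'\hookrightarrow E(g)\to E(g)/L$ is either zero, in which case $L'\subset L$ forces $L'=L$ (both being saturated line subbundles), or nonzero, giving $\deg L'\le\deg(E(g)/L)=0$. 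In either case $\mu(L')\le\mu(E(g))$, so $E(g)$ is semistable.

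The reflection symmetry is then essentially free: the transition $e^{\chi}$ is $U(1)$-valued (equivalently $\overline{e^{\chi}}=e^{-\chi}$ because $\chi\in i\mathbb{R}$), so the formula
\begin{gather*}
\tilde R\bigl(\phi(p)v_{\Sigma}(p)\bigr)=\overline{\phi(p)}\,v_{\Sigma^{*}}\bigl(R(p)\bigr),\qquad p\in\Sigma,
\end{gather*}
(and its analogue on $\Sigma^{*}$) defines an antiholomorphic involution $\tilde R\colon L\to L$ covering~$R$: compatibility across~$S$ reduces to $\overline{e^{\chi}}=e^{-\chi}$, and $\tilde R^{2}={\rm id}$ follows from $R^{2}={\rm id}$. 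The step I expect to be the main obstacle is the clean verification that the gluing of $v_{\Sigma}$ and $v_{\Sigma^{*}}$ really produces a holomorphic sub-line bundle of~$E(g)$, and not merely a sub-sheaf whose saturation could have strictly positive degree; this is exactly where the non-simultaneous-vanishing hypotheses in (I.1) and (II.1) are essential, and if one weakens them as in Section~\ref{thm1refinement} the argument will have to be reexamined. A secondary subtlety is pinning down which antiholomorphic structure on~$E(g)$ induced by $g\in {\rm SU}(2)$ one means by ``compatible with~$R$'' and checking that the formula above restricts to that structure on~$L$.
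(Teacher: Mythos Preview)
Your argument is correct and takes a genuinely different route from the paper's. The paper first invokes the equivalences (I.1)$\Leftrightarrow$(I.3) and (II.1)$\Leftrightarrow$(II.3) of Theorem~\ref{SU(2)theorem1} to obtain triangular factorizations of $k_1$ and $k_2$, multiplies everything out, and uses the double-coset description of bundles to reduce $g$ to an upper-triangular transition function $\left(\begin{smallmatrix} ae^{\chi_0} & B \\ 0 & (ae^{\chi_0})^{-1}\end{smallmatrix}\right)$; semistability is then argued by contradiction, and the $R$-symmetric sub-line bundle is the one with transition $ae^{\chi_0}$. You bypass Theorem~\ref{SU(2)theorem1} entirely and read the sub-line bundle off directly from the first columns of $k_2^{-1}$ and $k_1^{*}$, with transition $e^{\chi}$; this is the same sub-line bundle, since $e^{\chi_{\pm}}$ and the positive constant $a=a_1a_2$ are absorbed into the local trivializations. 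Your approach is more elementary and makes transparent exactly where the simultaneous-nonvanishing hypotheses in (I.1) and (II.1) enter. The paper's approach, on the other hand, is arranged so that the proof of the generalization in Section~\ref{theorem2generalization} (where $a_1$, $a_2$ become functions and the relevant degree is no longer zero) goes through with minimal change; your direct construction would have to be reworked there, exactly as you anticipate. One small clean-up: when the composite $L'\to E(g)/L$ vanishes you only get an inclusion $L'\hookrightarrow L$, which already gives $\deg L'\le\deg L=0$; there is no need (and it is not quite true in general) to conclude $L'=L$.
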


\begin{Remark}\label{generality} In the following proof, $a_1$ is a positive constant and hence $a_1^*=a_1$. We will write~$a_1^*$ at various points in the proof because in the next subsection we will want to note that the calculations are valid more generally when~$a_1$ is a function. \end{Remark}

\begin{proof} In the proof we will use
the fact that, using a multiloop as a transition function, there is a bijective correspondence
between the double coset space
\begin{gather*}H^0\big(\Sigma^{0*},G\big) \backslash \operatorname{Hyp}(S,G)/H^0\big(\Sigma^0,G\big)
\end{gather*}
and the set of isomorphism classes
of holomorphic~$G$ bundles on~$\widehat{\Sigma}$, and that a smooth function def\/ines a hyperfunction; see Appendix~\ref{bundles}.

By assumption $k_1$ and $k_2$ have `triangular
factorizations' of the following forms:
\begin{gather*}
k_1=\left(\begin{matrix} 1&0\\
y^{*}+y_0&1\end{matrix} \right)\left(\begin{matrix} a_1&0\\
0&a_1^{-1}\end{matrix} \right)\left(\begin{matrix} \alpha_1&\beta_1\\
\gamma_1&\delta_1\end{matrix} \right),
\end{gather*} and
\begin{gather*}
k_2=\left(\begin{matrix} 1&x^{*}+x_0\\
0&1\end{matrix} \right)\left(\begin{matrix} a_2&0\\
0&a_2^{-1}\end{matrix} \right)\left(\begin{matrix} \alpha_2&\beta_2\\
\gamma_2&\delta_2\end{matrix} \right).
\end{gather*}

Given these `triangular factorizations' for~$k_1$
and~$k_2$, we can derive a~`triangular factorization' for $g$. (Note: we have not developed
a general theory of triangular factorization in the context of this paper, so we are using this term somewhat
loosely.) To simplify notation,
let $X=a_2^{-2}(x+x_0^*)$, $Y=a_1^2(y+y_0^*)$, and $a=a_1^*a_2$. Then
\begin{gather*}
g=\left(\begin{matrix} \alpha_1 &\beta_1\\
\gamma_1&\delta_1\end{matrix}
\right)^*\left(\begin{matrix}1&Y\\0&1\end{matrix}\right)\left(\begin{matrix}ae^{\chi_-+\chi_0+\chi_+}&0
\\0&(ae^{\chi_-+\chi_0+\chi_+})^{-1}\end{matrix}\right)
\left(\begin{matrix} 1&X^{*}\\
0&1\end{matrix} \right)\left(\begin{matrix} \alpha_2&\beta_2\\
\gamma_2&\delta_2\end{matrix} \right) \\
\hphantom{g}{}
=\left(\begin{matrix} \alpha_1^*&\gamma_1^*\\
\beta_1^*&\delta_1^*\end{matrix}
\right)\left(\begin{matrix}e^{\chi_-}&0
\\0&e^{-\chi_-}\end{matrix}\right)\left(\begin{matrix}1&e^{-2\chi_-}Y\\0&1\end{matrix}\right)
\left(\begin{matrix}ae^{\chi_0}&0
\\0&(ae^{\chi_0})^{-1}\end{matrix}\right)
\left(\begin{matrix} 1&e^{2\chi_+}X^{*}\\
0&1\end{matrix} \right)
\\
\hphantom{g=}{}\times \left(\begin{matrix}e^{\chi_+}&0
\\0&e^{-\chi_+}\end{matrix}\right)\left(\begin{matrix} \alpha_2&\beta_2\\
\gamma_2&\delta_2\end{matrix} \right).
\end{gather*}
The bundle $E(g) $ def\/ined by $g$ as a transition
function depends only on the product of the middle
three factors, because of the remark at the beginning of the proof. The product of the middle three factors equals
\begin{gather*}
\left(\begin{matrix}ae^{\chi_0}&B\\0
&(ae^{\chi_0})^{-1}\end{matrix}\right),
\end{gather*}
where
\begin{gather*}
B=a^{-1}e^{-\chi_0-2\chi_-}Y+ae^{\chi_0+2\chi_+}X^*.
\end{gather*}

We claim that the rank two bundle def\/ined by this transition function is semistable. Suppose otherwise.
Then there exists a sub-line bundle which has positive degree. The degree of this sub-line bundle is the negative of
the degree of a transition function $S\to {\rm GL}(1,\mathbb C)$ (see our conventions for transition functions
in Remark~\ref{transitionconvention}). This means that there exists a~factorization of the form
\begin{gather*}
\left(\begin{matrix}ae^{\chi_0}&B\\0
&(ae^{\chi_0})^{-1}\end{matrix}\right)=g_-\left(\begin{matrix}\lambda&\beta\\0&\lambda^{-1}\end{matrix}\right)g_+^{-1},
\end{gather*}
where $g_{\pm}=\left(\begin{matrix}\alpha_{\pm}&\beta_{\pm}\\\gamma_{\pm}&\delta_{\pm}\end{matrix}\right)$
are holomorphic ${\rm SL}(2,\mathbb C)$-valued functions in $\Sigma$ ($\Sigma^*$, respectively),
and the degree of $\lambda\colon S \to {\rm GL}(1,\mathbb C)$ is negative. This factorization implies
\begin{gather*}
ae^{\chi_0}\gamma_-=\lambda^{-1}\gamma_+
\end{gather*} and hence the line bundle def\/ined by the transition function
$(ae^{-\chi_0}\lambda)^{-1}$ has
a global holomorphic section. But this is impossible, because the degree of this transition function is positive (i.e., the degree
of the corresponding line bundle is negative).
This implies that~$E(g)$ is semistable. (Note that in general, a principal bundle is semistable if and only if the
corresponding adjoint bundle is semistable; however for ${\rm SL}(2,\mathbb C)$ it suf\/f\/ices to consider the def\/ining representation, see~\cite{Ramanathan}.)

Because $\chi_0=-\chi_0^*$, and because~$a$ is a positive constant, the line bundle def\/ined by the transition function $ae^{\chi_0}$
has antiholomorphic symmetry with respect to~$R$. This completes the proof.
\end{proof}

\subsection{A generalization of Theorem~\ref{SU(2)theorem2}}\label{theorem2generalization}

\begin{Theorem}\label{SU(2)theorem2generalized} Suppose $g\in C^{\infty}(S,{\rm SU}(2))$. If $g$ has a factorization
\begin{gather*}
g(z)=k_1^*(z)\left(\begin{matrix} e^{\chi(z)}&0\\
0&e^{-\chi(z)}\end{matrix}\right)k_2(z),
\end{gather*} where $\chi \in
C^{\infty}(S,i\mathbb R)$, and $k_1$ and $k_2$ are as in {\rm (I.1)} and~{\rm (II.1)}, respectively, of Theorem~{\rm \ref{SU(2)theorem1smooth2}},
then~$E(g)$, the holomorphic $G$ bundle on $\widehat{\Sigma}$
defined by~$g$ as a transition function, is semistable.
\end{Theorem}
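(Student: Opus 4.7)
The plan is to follow the proof of Theorem~\ref{SU(2)theorem2} essentially verbatim, substituting the enhanced triangular factorizations of $k_1$ and $k_2$ provided by Theorem~\ref{SU(2)theorem1smooth2}. In these factorizations $a_1$ and $a_2$ are no longer positive constants but nonvanishing sums of zero modes and holomorphic functions, meromorphic in $\Sigma$ with simple poles at the common zeros of $(a,b)$ and $(c,d)$ respectively. As Remark~\ref{generality} explicitly anticipated, the algebraic rearrangement in the proof of Theorem~\ref{SU(2)theorem2} was written so that the identity
$$g = (\cdots) \left(\begin{matrix} a e^{\chi_0} & B \\ 0 & (ae^{\chi_0})^{-1}\end{matrix}\right) (\cdots), \qquad a = a_1^* a_2,$$
with $B = a^{-1} e^{-\chi_0 - 2\chi_-} Y + a e^{\chi_0 + 2 \chi_+} X^*$, remains valid when $a_1^*$ (and symmetrically $a_2$) is allowed to be a function. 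Thus $E(g)$ is again represented, up to isomorphism, by the upper triangular middle factor, and the issue reduces to reproving semistability in this setting.

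For that, I would argue by contradiction exactly as in the proof of Theorem~\ref{SU(2)theorem2}. A destabilizing sub-line bundle of positive degree yields
$$\left(\begin{matrix} ae^{\chi_0} & B \\ 0 & (ae^{\chi_0})^{-1}\end{matrix}\right) = g_- \left(\begin{matrix} \lambda & \beta \\ 0 & \lambda^{-1}\end{matrix}\right) g_+^{-1}$$
with $g_\pm$ holomorphic ${\rm SL}(2,\mathbb C)$-valued on $\Sigma^*$, $\Sigma$ respectively, and $\lambda : S \to {\rm GL}(1,\mathbb C)$ of negative transition-function degree. Comparing $(2,1)$-entries gives $ae^{\chi_0}\gamma_- = \lambda^{-1}\gamma_+$, exhibiting $(\gamma_-,\gamma_+)$ as a global holomorphic section of the line bundle with transition function $(ae^{\chi_0}\lambda)^{-1}$. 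A contradiction will follow once we show this transition function has strictly positive degree.

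The main obstacle, and the only essentially new ingredient, is controlling the transition-function degree of $a = a_1^* a_2$, which in the setting of Theorem~\ref{SU(2)theorem2} was automatic (being zero). Here $a_2$ is meromorphic and nonvanishing in $\Sigma$, so $(2\pi i)^{-1}\int_S d\log a_2 = -P_2$ with $P_2$ the number of poles of $a_2$ in $\Sigma$, while $(2\pi i)^{-1}\int_S d\log a_1^* = P_1$, the number of poles of $a_1$ in $\Sigma$; these cancel only when $P_1 = P_2$. The hypotheses of Theorem~\ref{SU(2)theorem1smooth2} confine both pole sets to the zero locus of $dk$, which has cardinality $2\,\mathrm{genus}(\widehat\Sigma)$ by~(a) of Remark~\ref{ltfcomments} and is permuted by $R$ since $(dk)^* = -dk$. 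I would exploit this reflection symmetry, together with the freedom to multiply $a_1, a_2$ by nonvanishing holomorphic factors in the interior without changing $E(g)$, to force the transition-function degree of $a$ to vanish, or at worst to bound it so that any destabilizing sub-line bundle must have degree strictly exceeding this bound. Either way the original degree contradiction is recovered and semistability follows.
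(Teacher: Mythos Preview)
Your overall strategy is right, and matches the paper's: reduce to the upper-triangular middle factor and run the same destabilizing-subbundle contradiction, the only new ingredient being the degree of $a=a_1^*a_2$. But your degree computation has a sign error coming from a misreading of Theorem~\ref{SU(2)theorem1smooth2}. In (I.3) it is $a_1^{-1}$, not $a_1$, that is the nonvanishing sum of a zero mode and a holomorphic function on $\Sigma$; consequently $a_1^{-1}$ has simple poles at the common zeros of $(a,b)$ and no zeros, so $a_1$ has \emph{zeros} (not poles) there and no poles in $\Sigma$. Hence
\[
\deg(a_1^*) \;=\; -\deg(a_1) \;=\; \deg\big(a_1^{-1}\big) \;=\; -N_1,
\]
where $N_1$ is the number of common zeros of $(a,b)$ in $\Sigma$. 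Together with your (correct) $\deg(a_2)=-N_2$, this gives $\deg(a)=-(N_1+N_2)\le 0$, not $P_1-P_2$. So the two contributions do not cancel---they reinforce.

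Once this is corrected, the rest is immediate: $\deg(e^{\chi_0})=0$ and $\deg(\lambda)<0$ force $\deg\big((ae^{\chi_0}\lambda)^{-1}\big)>0$, and the contradiction goes through exactly as in the proof of Theorem~\ref{SU(2)theorem2}. Your last paragraph---invoking reflection symmetry of the zero set of $dk$ and the freedom to rescale $a_1,a_2$---is therefore unnecessary; moreover, multiplying $a_1$ or $a_2$ by a nonvanishing holomorphic function on (an open enlargement of) $\Sigma$ cannot change its transition-function degree anyway, so that maneuver would not help even if it were needed.
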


\begin{proof} For $a_1$ and $a_2$ as in Theorem~\ref{SU(2)theorem1smooth2} (viewed as transition functions)
\begin{gather*}
\deg(a_1^*a_2)=\deg \big(a_1^{-1}\big)+\deg(a_2).
\end{gather*}
Since $a_1^{-1}$ and $a_2$ do not vanish in $\Sigma$, this degree is the negative of the sum of the number of common zeros of $(a,b)$ in $\Sigma$ and the number of common zeros of~$(c,d)$ in~$\Sigma$. In particular the degree of $a=a_1^*a_2$ is negative. With the exception of the last
paragraph, we can now repeat the preceding proof verbatim; the key point is that the degree of the transition
function $(ae^{-\chi_0}\lambda)^{-1}$, calculated in the penultimate paragraph, remains positive.
\end{proof}

The main point of this theorem is that the antiholomorphic symmetry has been broken. Consequently there is now a chance that the
factorization of $g$, as in the theorem, is generic.

\subsection{Do there exist converses?}\label{generic}

As of this writing, the questions of whether there are converses to Theorems~\ref{SU(2)theorem2} and~\ref{SU(2)theorem2generalized}
are open. We brief\/ly discuss the issues involved. In this subsection, for simplicity, we assume that~$S$ has one connected
component. We f\/irst consider Theorem~\ref{SU(2)theorem2}.

Suppose that $g\in C^{\omega}(S;{\rm SU}(2,\mathbb C))$ and $E(g)$ is semistable. Any holomorphic vector bundle on a Riemann surface
has a f\/lag of holomorphic subbundles. In particular for $E(g)$, there exists a holomorphic sub-line bundle in the
associated vector bundle for the def\/ining representation. This implies the existence of a factorization of the form
\begin{gather}\label{stablefactorization}
g=\left(\begin{matrix}l_{11}&l_{12}\\l_{21}&l_{22}\end{matrix}\right)
\left(\begin{matrix}\lambda&B'\\0&\lambda^{-1}\end{matrix}\right)
\left(\begin{matrix}u_{11}&u_{12}\\u_{21}&u_{22}\end{matrix}\right),
\end{gather}
where $l\in H^0(\Sigma^*,{\rm SL}(2,\mathbb C))$, $u\in H^0(\Sigma,{\rm SL}(2,\mathbb C))$,
$\lambda\colon S\to {\rm GL}(1,\mathbb C)$ has degree
zero (by semistability), and $B'\colon S\to \mathbb C$. To proceed rigorously, we would need to know more about
this kind of factorization, but in this heuristic discussion, we will put this aside. If we assume that the line bundle has an antiholomorphic
symmetry compatible with $R$,
then the degree of the line bundle is zero, and because we are assuming $S$ is connected, we can
assume that $\lambda=a'\exp(\chi_0)$, where $\chi_0=-\chi_0^*$ and $a'$ is a positive constant (we eventually want
$a'=a_1a_2$). By comparing with the factorization of $g$ in the previous subsection, we see that (putting questions about uniqueness aside)
\begin{gather*}
l_{11}=\alpha_1^*\exp(\chi_-), \qquad l_{21}=\beta_1^*\exp(\chi_-), \qquad u_{21}=\gamma_2\exp(-\chi_+),\qquad u_{22}=\delta_2\exp(-\chi_+).
\end{gather*}
Given this, one can possibly mimic the calculations
in~\cite[Section~3]{PJFA} to reconstruct the fac\-tori\-za\-tion. A~key point here is that we need to know there does exist a unitary
transition function; this is discussed in Appendix~\ref{intuition}.

Theorem \ref{SU(2)theorem2generalized} asserts the existence of a diagram of the form,
\begin{gather*}
\begin{matrix}\{g=k_1^*\operatorname{diag}(e^{\chi},e^{-\chi})k_2\}&\subset &C^{\omega}(S;{\rm SU}(2))& \subset &\operatorname{Hyp}(S;{\rm SL}(2))\\
\downarrow& & \downarrow & & \downarrow\\
\left\{\begin{matrix}\text{semistable}\\ \text{SL(2)-bundles}\end{matrix}\right\}& \subset & \left\{\begin{matrix}\text{holomorphic}\\
\text{SL (2)-bundles}\end{matrix}\right\}&\equiv &
\left\{\begin{matrix}\text{holomorphic}\\ \text{SL(2)-bundles}\end{matrix}\right\}\end{matrix}
\end{gather*}
where $k_1$ and $k_2$ satisfy the conditions in Theorem~\ref{SU(2)theorem1smooth2}, and isomorphic bundles are identif\/ied. The question is whether the f\/irst down arrow is onto (for the second down arrow, see Appendix~\ref{intuition}).
To understand what is needed, suppose that~$E(g)$ is semistable and there is a~factorization
as in~(\ref{stablefactorization}). To get started, we need a factorization
of the form $\lambda'=a_1^*a_2$ (for some~$\lambda'$ equivalent to~$\lambda$, as a~transition function), where~$a_1$ and~$a_2$ are functions as in Section~\ref{thm1refinement} and Theorem~\ref{SU(2)theorem2generalized}.

\begin{Remark}\label{positivetransfn}
To develop some intuition, we need to be able to solve the following problem:
Consider a decomposable bundle def\/ined by a transition function $\operatorname{diag}(a,a^{-1})$ where $a$ is positive function along~$S$ (e.g., $a=\exp(\chi_1)$, where~$\chi_1$ is a~zero mode with $\chi_1=\chi_1^*$). Find a multiloop $g\colon S\to {\rm SU}(2,\mathbb C)$ which maps to this bundle.
\end{Remark}

\section{Spin Toeplitz operators}\label{SU(2)thm3}

A spin structure for a Riemann surface is the same thing as a choice of square root for
the cano\-ni\-cal bundle, $\kappa$ (by way of explanation, holomorphic sections of $\kappa$ are holomorphic dif\/ferentials).
We let $\kappa^{1/2}$ ($\to \Sigma,\widehat{\Sigma},\Sigma^*$, depending on the surface) denote the choice of square root
(see  \cite[Chapter~7]{Segal} for other points of view). There is a natural (pre-) Hilbert space structure on
$\Omega^{0}\big(S,\kappa^{1/2}\big)\otimes \mathbb C^2$ ($\mathbb C^2$ valued spinors along~$S$) given by
\begin{gather*}
\langle\psi,\phi\rangle=\int_S\left( \psi,\overline{\phi} \right)_{\mathbb C^2},
 \end{gather*}
where $\left(v,\overline{w}\right)_{\mathbb C^2}$ denotes the standard Hermitian inner product on~$\mathbb C^2$, so that the integrand
is a~one density on~$S$.

For the sheaf of holomorphic sections of $\kappa^{1/2}$, and (a slight open thickening of) the covering of $\widehat{\Sigma}$ by
$\Sigma$ and $\Sigma^*$, there is a Mayer--Vietoris long exact sequence,
\begin{gather*}
0\to H^0\big(\widehat{\Sigma},\kappa^{1/2}\big) \to H^0\big(\Sigma,\kappa^{1/2}\big)\oplus H^0\big(\Sigma^*,\kappa^{1/2}\big)\\
\hphantom{0}{}
\to \Omega^{0}\big(S,\kappa^{1/2}\big) \to H^1\big(\widehat{\Sigma},\kappa^{1/2}\big)\to 0.
\end{gather*} In terms of the $\overline \partial$ operator,
$H^0\big(\widehat{\Sigma},\kappa^{1/2}\big)=\operatorname{ker}(\overline{\partial})$ and $H^1\big(\widehat{\Sigma},\kappa^{1/2}\big)=\operatorname{coker}(\overline{\partial})$.
The index of $\overline{\partial}$ is zero.

When $\overline{\partial}$ is invertible, a generic condition,
by taking Hilbert space completions, we obtain a~Hilbert space polarization
\begin{gather*}
L^2\Omega^{1/2}(S)\otimes \mathbb C^2=H_+\oplus H_-,
\end{gather*}
where $L^2\Omega^{1/2}(S)$, $H_+$, and $H_-$ denote the completions of $\Omega^{0}\big(S,\kappa^{1/2}\big)$, $H^0\big(\Sigma,\kappa^{1/2}\big)\otimes \mathbb C^2$, and $H^0\big(\Sigma^*,\kappa^{1/2}\big)\otimes \mathbb C^2$, respectively.
When $\overline{\partial}$ is not invertible, there are two distinct reasonable polarizations. The f\/irst possibility is that $H_+$ is the completion of $\Omega^{0}\big(S,\kappa^{1/2}\big)\otimes \mathbb C^2$, and $H_-$ is the orthogonal complement.
The second possibility is that $H_-$ is the completion of $H^0\big(\Sigma^*,\kappa^{1/2}\big)\otimes \mathbb C^2$ and~$H_+$ is the orthogonal complement.

Given an essentially bounded matrix valued function $g\colon S\to \mathcal L(\mathbb C^2)$,
there is an associated bounded multiplication operator
$M_g$ on $\Omega^{1/2}(S)\otimes \mathbb C^2$, and relative to a polarization as in the preceding paragraphs
\begin{gather*}
M_g=\left(\begin{matrix}A(g)&B(g)\\C(g)&D(g)\end{matrix}\right).
\end{gather*}
If $\Sigma=D$, the unit disk, then $A(g)$ and $B(g)$ are the classical (block) Toeplitz and Hankel operators associated to the symbol~$g$ (see~\cite{BS} and  \cite[Chapter~3]{Peller}). In general the basic qualitative properties of the operators~$A(g)$ and~$B(g)$ (which we
refer to as `spin Toeplitz and Hankel operators', respectively) are the same
as in the classical case, because the projection to $H_+$ dif\/fers from the classical projection by a smoothing operator.
For example (of most importance for our purposes), exactly as in~\cite[Chapter~5]{PS}, the map $g\to M_g$ def\/ines an embedding of loops into the Hilbert--Schmidt general linear
group; the precise statement is that there is an embedding
\begin{gather}\label{embedding}
L^{\infty}\cap W^{1/2}(S;{\rm SL}(2,\mathbb C)) \to {\rm GL}(H_+\oplus H_-)_{(2)}.
\end{gather}
The relevant Lie structure is described in~\cite[Chapters~6 and~7]{PS}.

\begin{Remark} The Krichever--Novikov theory, at least in principle, provides an explicit means to calculate spin Hankel and Toeplitz operators.
According to Krichever and Novikov (following the notation in~\cite{KN}), for certain spin structures and for generic basepoints, there is
an (orthonormal) basis for $\mathcal M^{1/2}$ (meromorphic spinors which are regular in the complement of the basepoints) of the form
\begin{gather*}\dots, \phi_{3/2},\phi_{1/2},\phi_{-1/2}\phi_{-3/2},\dots
\end{gather*}
(which in the classical case reduces to the usual basis
\begin{gather*}
\dots, z^2(dz)^{1/2},z(dz)^{1/2},z^0(dz)^{1/2},z^{-1}(dz)^{1/2},\dots)
\end{gather*}
a basis for $\mathcal M^0$, $\{f_k\colon k\in \frac{g}{2}+\mathbb Z\}$ (which in the classical case reduces to the usual basis),
and constants $Q$ such that
\begin{gather*}
f_n \phi_m=\sum_{\vert k\vert\le g/2} Q^k_{n,m} \phi_{n+m-k}.
\end{gather*}
Thus for each $n$, the matrices for the multiplication operator, and the spin Toeplitz operator, def\/ined by~$f_n$ will have band width (the number of nonzero diagonals that
appear) equal to $g+1$, $g=\operatorname{genus}(\hat{\Sigma})$ (in the classical case $f_n$ is a shift operator, and the bandwidth is one).
We will pursue this in part~II of this paper~\cite{BP2}.
\end{Remark}

\subsection{Determinants: proof of Theorem \ref{SU(2)theorem3}}

\begin{Theorem}\label{SU(2)theorem3statement} Suppose that $g$ has a factorization as in Theorem~{\rm \ref{SU(2)theorem2}}.
Then for any choice of spin structure and polarization as in the preceding subsection,
\begin{gather*}
\det\big(A(g)A\big(g^{-1}\big)\big)=\det\big(A(k_1)A\big(k_1^{-1}\big)\big)\det\big(A (e^{\chi})A(e^{-\chi})\big)\det\big(A(k_2)A\big(k_2^{-1}\big)\big).
\end{gather*}
\end{Theorem}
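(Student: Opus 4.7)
The plan is to combine Theorem~\ref{SU(2)theorem4}, the block-matrix expansion of $M_g = M_{k_1^*}M_D M_{k_2}$, and the unitarity of $g$, and then appeal to multiplicativity and cyclicity of the Fredholm determinant on operators of the form $I + (\text{trace class})$. First, writing out $M_g M_{g^{-1}} = I$ in block form gives $A(g)A(g^{-1}) = I - B(g)C(g^{-1})$, and since $g$ is pointwise unitary $C(g^{-1}) = B(g)^*$, so
\begin{gather*}
\det\bigl(A(g)A(g^{-1})\bigr) = \det\bigl(I - B(g)B(g)^*\bigr),
\end{gather*}
with analogous formulas for $k_1$, $k_2$, and $D := \operatorname{diag}(e^\chi,e^{-\chi})$ individually. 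This is the identity to be factored.

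Next, I would extract from Theorem~\ref{SU(2)theorem4} more than its literal statement. The equivalent operator identity is $B(k_1^*)C(k_2) = 0$, and the proof of that identity actually shows the sharper fact that the image of $C(k_2)$ lies in the first-coordinate copy of $H_-$ inside $H_-\otimes \mathbb{C}^2$, while $B(k_1^*)$ annihilates that same first-coordinate subspace. Because $D$ is block-diagonal in the $\mathbb{C}^2$ structure, every operator built from $M_D$ and the projections $P_\pm$ on $H_-\otimes \mathbb{C}^2$ preserves the coordinate splitting, and one obtains the strengthened identities
\begin{gather*}
B(k_1^*)\cdot Z \cdot C(k_2) = 0, \qquad B(k_2^*)\cdot Z \cdot C(k_1) = 0
\end{gather*}
for every $\mathbb{C}^2$-block-diagonal operator $Z$.

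Now expand $A(g) = P_+ M_{k_1^*}M_D M_{k_2} P_+$ by inserting $I = P_+ + P_-$ twice, producing the principal term $A(k_1^*)A(D)A(k_2)$ together with three cross terms. The cross term $B(k_1^*)D(D)C(k_2)$ vanishes by the previous paragraph, and the surviving cross terms factor through the scalar Hankel operators $B(e^{\pm\chi})$ and $C(e^{\pm\chi})$ with corner pieces depending only on $k_1$ or $k_2$. The same analysis applies to $A(g^{-1}) = P_+ M_{k_2^*}M_{D^{-1}} M_{k_1} P_+$. Forming $A(g)A(g^{-1}) = I - B(g)B(g)^*$ and using Sylvester's identity $\det(I-XY) = \det(I-YX)$ together with multiplicativity of $\det$ on $I+(\text{trace class})$, the expression should reorganize under cyclic reordering into
\begin{gather*}
A(k_1)A(k_1^{-1})\cdot A(D)A(D^{-1})\cdot A(k_2)A(k_2^{-1}),
\end{gather*}
whose determinant factors into three pieces; the block-diagonal structure then identifies $\det(A(D)A(D^{-1})) = \det(A(e^\chi)A(e^{-\chi}))$.

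The main obstacle will be the pair of surviving cross terms (those with $B(D)$ or $C(D)$ between $A$-pieces). They are rank-limited Hankel-type contributions that neither vanish individually nor factor out of $A(g)A(g^{-1})$ term by term. The technical heart of the proof is to show that when these cross terms are combined with their counterparts in $A(g^{-1})$ and reorganized by determinant cyclicity, they reassemble precisely into the factor $I - B(e^\chi)B(e^\chi)^*$ for the middle $D$, so that the final product matches the statement of the theorem.
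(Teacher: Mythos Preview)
Your strengthened vanishing $B(k_1^*)\,Z\,C(k_2)=0$ for $\mathbb C^2$-block-diagonal $Z$ on $H_-$ is correct and corresponds to the paper's Theorem~\ref{SU(2)theorem4b}. The gap is in what you yourself call ``the technical heart'': you assert that the two surviving cross terms $A(k_1^*)B(D)C(k_2)$ and $B(k_1^*)C(D)A(k_2)$, together with their counterparts from $A(g^{-1})$, will reassemble under Sylvester and cyclicity into exactly the factor $I-B(e^\chi)B(e^\chi)^*$, but you offer no mechanism for this. These terms genuinely mix $k_1$, $D$, and $k_2$; the determinant of a sum containing such products does not factor by cyclicity alone, and there is no visible way to complete the argument along these lines without additional structural input.

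The paper supplies that input by a different route. It works with the group $2$-cocycle $\mathbf c(g,h)^{-1}=\det\bigl(A(gh)A(h)^{-1}A(g)^{-1}\bigr)$ for the central extension and the equivariance identity $\mathbf c(g_-g,hg_+)=\mathbf c(g,h)$ whenever $g_-\in H^0(\Sigma^*,{\rm SL}(2,\mathbb C))$ and $g_+\in H^0(\Sigma,{\rm SL}(2,\mathbb C))$. The crucial ingredient your proposal never invokes is the \emph{triangular factorization} of $k_1$ and $k_2$ from Theorem~\ref{SU(2)theorem1}. Writing $k_1^*=(\text{holomorphic in }\Sigma^*)\cdot(\text{diagonal constant})\cdot\left(\begin{smallmatrix}1&y+y_0\\0&1\end{smallmatrix}\right)$ and applying equivariance reduces $\mathbf c(k_1^*,D)$ to a cocycle pairing between an $N^+$-valued loop and an $H$-valued (diagonal) loop. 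The associated Lie algebra cocycle $\omega(X,Y)=\operatorname{tr}(B(Y)C(X)-B(X)C(Y))$ is then checked by a two-line computation to vanish on $\Omega^0(S,\mathfrak n^+)\times\Omega^0(S,\mathfrak h)$, so the group cocycle is identically $1$ on these connected subgroups. The same reduction handles $\mathbf c(D,k_2)$ and $\mathbf c(k_1^*D,k_2)$; once all cross cocycles are $1$, repeated use of the cocycle identity gives $\det(A(g)A(g^{-1}))=\mathbf c(g,g^{-1})$ as the product of $\mathbf c(k_i,k_i^{-1})$ and $\mathbf c(D,D^{-1})$, which is the asserted factorization. In short, the paper replaces your unfinished direct expansion by a reduction to an easy infinitesimal calculation, and the tool enabling that reduction is the triangular factorization of $k_1$ and $k_2$, which is entirely absent from your outline.
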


This theorem will follow from Theorem \ref{SU(2)theorem4b} and the lemmas below. To place the calculations in some context, recall
(see~\cite[pp.~88--89]{PS}) that for the universal central extension (for the Hilbert--Schmidt general
linear group)
\begin{gather*}0\to \mathbb C^* \to \widetilde{{\rm GL}}(H_+\oplus H_-)_{(2)}\to {\rm GL}(H_+\oplus H_-)_{(2)}\end{gather*}
and the local cross-section (def\/ined on the open dense set of $g$ such that $A(g)$ is invertible)
\begin{gather*}g\to [g,A(g)]\end{gather*}
the multiplication is given by
\begin{gather*}
\tilde g_1 \tilde g_2=\mathbf c(g_1,g_2)\tilde g_3,
\end{gather*}
where the
cocycle is given by
\begin{gather}
\mathbf c(g,h)^{-1}=\det\big(A(gh)A(h)^{-1}A(g)^{-1}\big)=\det\big(A(g)^{-1}A(gh)A(h)^{-1}\big)\nonumber\\
\hphantom{\mathbf c(g,h)^{-1}}{}
=\det\big(1+A(g)^{-1}B(g)C(h)A(h)^{-1}\big).\label{cocycle}
\end{gather}

The corresponding Lie algebra cocycle is given by
\begin{gather}
\omega\colon \ \mathfrak{gl}(H_+\oplus H_-)_{(2)} \times \mathfrak{gl}(H_+\oplus H_-)_{(2)} \to\mathbb C,
\nonumber\\
 \omega(X,Y)=\operatorname{tr}([A(X),A(Y)]-A([X,Y]))=\operatorname{tr}(B(Y)C(X)-B(X)C(Y)).
\label{liealgcocycle}
\end{gather}

We are interested in the restriction of the cocycle to $\Omega^0(S,{\rm SL}(2,\mathbb C))$ via the injection (\ref{embedding}).
The basic fact about this cocycle, which we will use repeatedly, is that if $g_-\in H^0(\Sigma^*,{\rm SL}(2,\mathbb C))$
and $g_+\in H^0(\Sigma,{\rm SL}(2,\mathbb C))$, then
\begin{gather}\label{equividentity}
\mathbf c(g_-g,hg_+)=\mathbf c(g,h).
\end{gather}
This follows from $A(g_-g)=A(g_-)A(g)$ and $A(hg_+)=A(h)A(g_+)$, which in turn follow from $B(g_-)=0$ and $C(g_+)=0$, respectively.
To see this, suppose that $H_+$ is the completion of $\Omega^{0}\big(\Sigma,\kappa^{1/2}\big)\otimes \mathbb C^2$. Given a spinor $\phi_+\in H_+$,
the product~$g_+\phi_+$ is holomorphic in~$\Sigma$, hence $g_+\phi_+\in H_+$. This implies~$C(g_+)=0$. Now consider a~spinor $\phi\in H_-$, i.e., $\phi$ is orthogonal to~$H_+$. We claim that~$g_-\phi$ is also orthogonal to~$H_+$. If $\psi_+\in H_+$,
then the pointwise inner product (along~$S$)
\begin{gather*}
\left(\psi,\overline{g_-\phi}\right)_{\mathbb C^2}=\left(g_-^*\psi_+,\overline{\phi}\right)_{\mathbb C^2}.
\end{gather*}
Since $g_-^*\in H^0(\Sigma,{\rm SL}(2,\mathbb C))$, it follows that $\langle \psi_+,g_-\phi\rangle=\langle g_-^*\psi_+,\phi\rangle=0$.
It follows that \mbox{$B(g_-)=0$}. A similar argument applies if~$H_+$ is the orthogonal complement of~$\Omega^{0}(\Sigma^*,\kappa^{1/2})\otimes \mathbb C^2$.

To prove the theorem we need to show that $\mathbf c(g,h)=1$ when $(g,h)=(k_1^*,k_2)$ and so on. In fact in some cases
we can prove much stronger statements.

\begin{Theorem}\label{SU(2)theorem4b}
$A(k_1^*k_2)=A(k_1^*)A(k_2)$. In particular $\mathbf c(k^*_1,k_2)=1$.
\end{Theorem}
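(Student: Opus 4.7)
The plan is to prove the stronger operator identity $B(k_1^*)C(k_2)=0$; then $A(k_1^*k_2) = A(k_1^*)A(k_2)$ follows from the block-form identity $A(gh) = A(g)A(h) + B(g)C(h)$ coming from $M_{gh} = M_g M_h$, and $\mathbf c(k_1^*,k_2)=1$ is read off directly from formula~\eqref{cocycle}. All the content is thus in showing that the composition $H_+ \xrightarrow{C(k_2)} H_- \xrightarrow{B(k_1^*)} H_+$ is zero.

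First I would pin down the image of $C(k_2)$. Write $H_\pm^{\rm sc}$ for the scalar part of the polarization, so $H_\pm = H_\pm^{\rm sc}\otimes\mathbb C^2$. For $\phi = (\phi_1,\phi_2)^T \in H_+$ the second component of $k_2\phi$ is $c\phi_1+d\phi_2$. Since $c,d\in H^0(\Sigma)$ are bounded and $H_+^{\rm sc}$ is invariant under multiplication by bounded elements of $H^0(\Sigma)$, the sum $c\phi_1+d\phi_2$ lies in $H_+^{\rm sc}$ and is killed by $P_-$. Hence $\operatorname{Im}(C(k_2)) \subset H_-^{\rm sc}\oplus\{0\}$.

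Next I would verify that $B(k_1^*)$ annihilates this subspace. For $\xi\in H_-^{\rm sc}$,
\[
k_1^*(\xi,0)^T = (a^*\xi,\,b^*\xi)^T,
\]
and the claim is that both entries lie in $H_-^{\rm sc}$. When $H_-^{\rm sc}$ is the closure of $H^0(\Sigma^*,\kappa^{1/2})$ (the invertible $\overline\partial$ case, or the second polarization of Section~\ref{SU(2)thm3}) this is immediate from $a^*,b^*\in H^0(\Sigma^*)$. In the remaining polarization, where $H_+^{\rm sc}$ is a closure and $H_-^{\rm sc}$ its orthogonal complement, the same conclusion follows from the scalar duality $\langle \psi, f^*\xi\rangle = \langle f\psi,\xi\rangle$ (valid because $f^*|_S = \overline{f|_S}$ for scalar $f$), applied with $f\in\{a,b\}$ and $\psi$ ranging over $H_+^{\rm sc}$. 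So $k_1^*(\xi,0)^T \in H_-$ and $P_+$ kills it; combined with the previous step this gives $B(k_1^*)C(k_2)=0$.

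The only real obstacle is the polarization bookkeeping, namely verifying that $H_\pm^{\rm sc}$ is invariant under multiplication by bounded holomorphic functions on $\Sigma$ (resp.\ $\Sigma^*$); this reduces to the same duality argument used in Lemma~\ref{stabilitybymultiply}. Apart from that, the result is a direct consequence of the triangular structure of $k_1^*$ and $k_2$ relative to the Hardy-type decomposition, and requires no analysis of determinants.
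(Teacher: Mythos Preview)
Your proof is correct and follows essentially the same route as the paper: both arguments reduce to $B(k_1^*)C(k_2)=0$, establish that $C(k_2)$ lands in $H_-^{\rm sc}\oplus\{0\}$ because $c,d\in H^0(\Sigma)$, and then observe that $a^*,b^*\in H^0(\Sigma^*)$ keep that subspace inside $H_-$. One small remark: the invariance of $H_\pm^{\rm sc}$ under multiplication by bounded holomorphic functions is not Lemma~\ref{stabilitybymultiply} (that lemma concerns the function polarization of Section~\ref{LTF}, not the spinor polarization); the correct reference is the paragraph surrounding~\eqref{equividentity}, which proves exactly the duality statement you invoke.
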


In the process of proving the f\/irst statement, we will prove Theorem~\ref{SU(2)theorem4}, i.e., we will
only assume that~$k_1$ and~$k_2$ are measureable maps $S\to {\rm SU}(2)$ of the appropriate form. We need to assume
the loops are~$W^{1/2}$ for the second statement to be valid.

\begin{proof} Because
\begin{gather*}A(k_1^*k_2)=A(k_1^*)A(k_2)+B(k_1^*)C(k_2)
\end{gather*} this equivalent to showing that
$B(k_1^*)C(k_2)=0$. We will prove this by direct calculation.
Suppose that $f:=\left(\begin{matrix}f_1\\f_2\end{matrix}\right)\in H_+=H^{1/2}(\Delta)$.
Then
\begin{gather*}
B(k_1^*)C(k_2)f=P_+\left(k_1^*P_-(k_2f)\right)=P_+\left(k_1^*\left(\begin{matrix}P_-(d^*f_1-c^*f_2)\\0\end{matrix}\right) \right)\\
\hphantom{B(k_1^*)C(k_2)f}{}
=P_+\left(\left(\begin{matrix}a^*P_-(d^*f_1-c^*f_2)\\0\end{matrix}\right)\right)=0.
\end{gather*}
Thus $B(k_1^*)C(k_2)=0$. By relation~(\ref{cocycle}) (for suf\/f\/iciently smooth loops) this implies that \mbox{$\mathbf c(k^*_1,k_2)=1$}.
\end{proof}

\begin{Lemma}
$\mathbf c(k_1^*,\operatorname{diag}(\exp(\chi),\exp(-\chi)))=\mathbf c(\operatorname{diag}(\exp(\chi),\exp(-\chi)),k_2)=1$.
\end{Lemma}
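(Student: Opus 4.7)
The plan is to apply the equivariance identity \eqref{equividentity} to strip off all holomorphic factors, reducing each cocycle to one involving only the unipotent factor from the relevant triangular decomposition, and then to exploit the block structure in the $\mathbb{C}^2$ direct sum to see that the Fredholm determinant in \eqref{cocycle} is trivially~$1$.

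Using (I.3) of Theorem \ref{SU(2)theorem1}, I would write $k_1 = L \cdot D_1 \cdot U_1$, where $L$ is lower unipotent with $(2,1)$-entry $y^* + y_0$, $D_1 = \operatorname{diag}\big(a_1, a_1^{-1}\big)$ with $a_1$ a positive constant, and $U_1 \in H^0(\Sigma, {\rm SL}(2, \mathbb C))$ upper triangular at the basepoint. Then $k_1^* = U_1^* D_1 L^*$ with $L^*$ upper unipotent. Since $U_1^* D_1 \in H^0(\Sigma^*, {\rm SL}(2, \mathbb C))$, the equivariance identity will give
\begin{gather*}
\mathbf c(k_1^*, \Lambda) = \mathbf c(L^*, \Lambda), \qquad \Lambda := \operatorname{diag}\big(e^\chi, e^{-\chi}\big).
\end{gather*}
Analogously, (II.3) gives $k_2 = \tilde L_2^* \cdot D_2 \cdot U_2$ with $\tilde L_2^*$ upper unipotent and $D_2 U_2 \in H^0(\Sigma, {\rm SL}(2,\mathbb C))$, so that $\mathbf c(\Lambda, k_2) = \mathbf c(\Lambda, \tilde L_2^*)$.

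The heart of the proof is a direct block computation on $H_\pm = H_\pm^{(1)} \oplus H_\pm^{(2)}$, the splitting coming from the two copies of $\mathbb C$. Because $\Lambda$ is scalar-diagonal, each of $A(\Lambda), B(\Lambda), C(\Lambda)$ is block-diagonal; in particular $A(\Lambda)^{-1}$ is block-diagonal. Because $L^*$ is upper unipotent, inspection of the definitions shows that $A(L^*)$ is block-unipotent upper triangular while $B(L^*)$ is strictly block-upper-triangular, with its only nonzero block being the Hankel-type operator $\phi_2 \mapsto P_+^{(1)}((y+y_0^*)\phi_2)$ from $H_-^{(2)}$ to $H_+^{(1)}$. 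Consequently
\begin{gather*}
M := A(L^*)^{-1} B(L^*) C(\Lambda) A(\Lambda)^{-1}
\end{gather*}
is again strictly upper triangular as a block operator on $H_+^{(1)} \oplus H_+^{(2)}$, and in particular satisfies $M^2 = 0$. Under the smoothness hypothesis $M$ is trace class (since $B(L^*) C(\Lambda)$ is a product of two Hilbert--Schmidt operators by the embedding \eqref{embedding}), so $\det(1+M) = e^{\operatorname{tr}(M)} = 1$, and the cocycle formula \eqref{cocycle} yields $\mathbf c(L^*, \Lambda) = 1$. The computation for $\mathbf c(\Lambda, \tilde L_2^*)$ is entirely parallel, with $B(\Lambda)$ now playing the block-diagonal role and $C(\tilde L_2^*)$ the strictly block-upper-triangular role, so the same nilpotency forces the determinant to equal $1$.

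The main subtlety, rather than a genuine obstacle, is bookkeeping around the equivariance reduction: one needs the stripped factors to be genuinely ${\rm SL}(2,\mathbb C)$-valued and holomorphic on the correct side, which here follows because $a_1, a_2$ are positive constants (so $\det D_i = 1$) and $U_i$ are holomorphic in $\Sigma$ with determinant one. In the generalized setting of Theorem~\ref{SU(2)theorem2generalized}, where $a_1, a_2$ become functions, $D_i$ is no longer globally holomorphic in $\Sigma$, and this reduction would need to be restructured.
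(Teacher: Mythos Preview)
Your argument is correct. The equivariance reduction is exactly the one the paper uses, and your block-nilpotency computation for $M=A(L^*)^{-1}B(L^*)C(\Lambda)A(\Lambda)^{-1}$ is sound: $B(L^*)$ really is strictly block-upper-triangular while the $\Lambda$-factors are block-diagonal and $A(L^*)^{-1}$ is block-unipotent, so $M$ has vanishing block-diagonal, $M^2=0$, $\operatorname{tr}M=0$, and $\det(1+M)=1$.

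Where you diverge from the paper is in the endgame. After the same reduction to $\mathbf c(L^*,\Lambda)$, the paper passes to the Lie algebra cocycle~\eqref{liealgcocycle}, checks that $\operatorname{tr}(B(X)C(Y))=\operatorname{tr}(B(Y)C(X))=0$ whenever $X\in\Omega^0(S,\mathfrak n^+)$ and $Y\in\Omega^0(S,\mathfrak h)$, and then invokes connectivity of $\Omega^0(S,N^+)$ and $\Omega^0(S,H)_0$ to lift this vanishing to the group cocycle. Your route is more self-contained: it stays at the group level and never needs the ``Lie algebra cocycle vanishes $\Rightarrow$ group cocycle trivial on the identity component'' step. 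On the other hand, the paper's Lie-algebra viewpoint is what immediately feeds into the proof of the next lemma, where the relevant loops are again exponentials of upper-triangular and diagonal elements; your block-nilpotency trick would also handle that case, but one has to redo the block bookkeeping with the more complicated upper-triangular factor $\left(\begin{smallmatrix}e^{\chi}&y+y_0\\0&e^{-\chi}\end{smallmatrix}\right)$.
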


\begin{proof} There exists a
generalized triangular factorization
\begin{gather*}\left(\begin{matrix}a^*&-b\\b^*&a\end{matrix}\right)=
\left(\begin{matrix}\alpha_1^*&\gamma_1^*\\\beta_1^*&\delta_1^*\end{matrix}\right)
\left(\begin{matrix}a_1&0\\0&a_1^{-1}\end{matrix}\right)\left(\begin{matrix}1&y+y_0\\0&1\end{matrix}\right).
\end{gather*}
Consequently, using (\ref{equividentity}),
\begin{align*}
\mathbf c\left(\left(\begin{matrix}a^*&-b\\b^*&a\end{matrix}\right),\left(\begin{matrix}\exp(\chi)&0\\0&\exp(-\chi)\end{matrix}\right)\right)
& =
\mathbf c\left(\left(\begin{matrix}1&y+y_0\\0&1\end{matrix}\right),\left(\begin{matrix}e^{\chi}&0\\0&e^{-\chi}\end{matrix}\right)\right)\\
& =\mathbf c\left(\exp\left(\left(\begin{matrix}0&y+y_0\\0&0\end{matrix}\right) \right),\exp\left(\left(\begin{matrix}\chi&0\\0&-\chi\end{matrix}\right) \right)\right).
\end{align*}

In the last line we are considering the cocycle pairing of an element of $\Omega^0(S,N^+)$, where $N^+$ is the group of unipotent
upper triangular matrices in ${\rm SL}(2,\mathbb C)$, and an element of~$\Omega^0(S,H)$,
where~$H$ is the diagonal subgroup of~${\rm SL}(2,\mathbb C)$.
We claim that the corresponding Lie algebra cocycle pairing for $\Omega^0(S,\mathfrak n^+)$ and $\Omega^0(S,\mathfrak h)$ is zero.
To check this
suppose that $X:=\left(\begin{matrix}0&\beta\\0&0\end{matrix}\right)\in \Omega^0(S,\mathfrak n^+)$ and
$Y:=\operatorname{diag}(\chi,-\chi)\in \Omega^0(S,\mathfrak h)$.
We must calculate~(\ref{liealgcocycle}) for these two multiloops. Then
\begin{gather*}
B(X)C(Y)\left(\left(\begin{matrix}f_1\\f_2\end{matrix}\right)\right)=\left(\begin{matrix}-(Y(\chi f_2)_-)_+\\0\end{matrix}\right).
\end{gather*}
From this it is clear that $\operatorname{tr}(B(X)C(Y))=0$. Similarly $\operatorname{tr}(B(Y)C(X))=0$. Thus the cocycle pairing for
$\Omega^0(S,\mathfrak n^+)$ and $\Omega^0(S,\mathfrak h)$ is zero. It follows that the cocycle pairing for
$\Omega^0(S,N^+)$ and $\Omega^0(S,H)_0$ (the identity component) is trivial.
This implies that
\begin{gather*}
\mathbf c(k_1^*,\operatorname{diag}(\exp(\chi),\exp(-\chi)))=1.
\end{gather*}
By the same argument $\mathbf c(\operatorname{diag}(\exp(\chi),\exp(-\chi)),k_2)=1$.
\end{proof}

\begin{Lemma} $\mathbf c(k_1^*\operatorname{diag}(\exp(\chi),\exp(-\chi)),k_2)=\mathbf c(k_1^*,\operatorname{diag}(\exp(\chi),\exp(-\chi))k_2)=1$.
\end{Lemma}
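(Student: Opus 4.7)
The plan is to combine the standard 2-cocycle identity
\begin{gather*}
\mathbf{c}(a,b)\,\mathbf{c}(ab,c)=\mathbf{c}(a,bc)\,\mathbf{c}(b,c),
\end{gather*}
applied with $a=k_1^*$, $b=D:=\operatorname{diag}(e^{\chi},e^{-\chi})$, $c=k_2$, with the preceding lemma. Since that lemma gives $\mathbf{c}(k_1^*,D)=\mathbf{c}(D,k_2)=1$, the identity immediately collapses to
\begin{gather*}
\mathbf{c}(k_1^*D,k_2)=\mathbf{c}(k_1^*,Dk_2),
\end{gather*}
so it suffices to prove that one of these, say $\mathbf{c}(k_1^*D,k_2)$, equals $1$.

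To do this, I would substitute the triangular factorizations $k_1^* = L_-A_1 N_+^{(1)}$ and $k_2=N_+^{(2)}A_2 U_+$, where $L_-=\left(\begin{matrix}\alpha_1^*&\gamma_1^*\\ \beta_1^*&\delta_1^*\end{matrix}\right)\in H^0(\Sigma^*,{\rm SL}(2,\mathbb C))$, $U_+=\left(\begin{matrix}\alpha_2&\beta_2\\ \gamma_2&\delta_2\end{matrix}\right)\in H^0(\Sigma,{\rm SL}(2,\mathbb C))$, the $A_j=\operatorname{diag}(a_j,a_j^{-1})$ are constant, and $N_+^{(j)}$ is unipotent upper triangular. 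The equivariance identity (\ref{equividentity}) strips off $L_-$ on the left and $U_+$ on the right; because $A_1$ and $A_2$ are constant matrices, they are holomorphic in both $\Sigma$ and $\Sigma^*$, and a further application of (\ref{equividentity}) removes them as well, giving
\begin{gather*}
\mathbf{c}(k_1^*D,k_2)=\mathbf{c}\bigl(N_+^{(1)}D,N_+^{(2)}\bigr).
\end{gather*}

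Writing $N_+^{(1)}D=DN$ with $N:=D^{-1}N_+^{(1)}D$ again unipotent upper triangular (conjugation by a diagonal matrix preserves upper triangularity), the cocycle relation applied with $a=D$, $b=N$, $c=N_+^{(2)}$ yields
\begin{gather*}
\mathbf{c}(D,N)\,\mathbf{c}\bigl(DN,N_+^{(2)}\bigr)=\mathbf{c}\bigl(D,NN_+^{(2)}\bigr)\,\mathbf{c}\bigl(N,N_+^{(2)}\bigr).
\end{gather*}
The pairings $\mathbf{c}(D,N)$ and $\mathbf{c}\bigl(D,NN_+^{(2)}\bigr)$ are of a diagonal loop with a unipotent upper triangular loop, which are trivial by the preceding lemma (the Lie algebra pairing $\Omega^0(S,\mathfrak n^+)\times\Omega^0(S,\mathfrak h)\to\mathbb C$ was shown there to vanish). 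The pairing $\mathbf{c}\bigl(N,N_+^{(2)}\bigr)$ is trivial because the Lie algebra cocycle (\ref{liealgcocycle}) vanishes on $\Omega^0(S,\mathfrak n^+)\times\Omega^0(S,\mathfrak n^+)$: for strictly upper triangular $X$ and $Y$ a direct check shows $B(X)C(Y)=B(Y)C(X)=0$, and the unipotent upper triangular loop group is connected. Hence $\mathbf{c}\bigl(DN,N_+^{(2)}\bigr)=1$, which gives $\mathbf{c}(k_1^*D,k_2)=1$, and then $\mathbf{c}(k_1^*,Dk_2)=1$ follows from the collapsed identity above.

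I expect the main obstacle to be bookkeeping rather than substance: tracking the repeated applications of (\ref{equividentity}) and the 2-cocycle relation, and verifying in the smooth (or Hilbert--Schmidt) setting that every occurrence of the formula (\ref{cocycle}) is justified for the various intermediate products. No essentially new analytic input beyond what was developed for the preceding lemma seems to be required.
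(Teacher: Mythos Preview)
Your proposal is correct and follows essentially the same route as the paper: reduce via the equivariance identity~(\ref{equividentity}) and the triangular factorizations to a pairing of upper-triangular loops, then invoke the vanishing of the Lie algebra cocycle established in the preceding lemma. The paper is simply more terse: after stripping $L_-$, $A_1$, $A_2$, $U_+$ it arrives at
\begin{gather*}
\mathbf c\left(\begin{pmatrix}e^{\chi}&*\\0&e^{-\chi}\end{pmatrix},\begin{pmatrix}1&x^*+x_0\\0&1\end{pmatrix}\right)
\end{gather*}
and observes in one stroke that this is a pairing between an element of the (connected) Borel loop group $\Omega^0(S,B^+)$ and an element of $\Omega^0(S,N^+)$, whose Lie algebra pairing $\omega(\mathfrak b^+,\mathfrak n^+)$ vanishes. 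Your argument effectively reproves this last point by decomposing $B^+=H\cdot N^+$ via the 2-cocycle identity and treating the $\mathfrak h\times\mathfrak n^+$ and $\mathfrak n^+\times\mathfrak n^+$ pairings separately; this is a bit more circuitous but has the virtue of making each vanishing step explicit (including the $\mathfrak n^+\times\mathfrak n^+$ case, which the paper leaves implicit). Your use of the cocycle identity to deduce $\mathbf c(k_1^*,Dk_2)=\mathbf c(k_1^*D,k_2)$ from the preceding lemma is also a clean touch that the paper omits.
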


\begin{proof}Using (\ref{equividentity})
\begin{gather*}\mathbf c(k_1^*\operatorname{diag}(\exp(\chi),\exp(-\chi)),k_2)=\mathbf c\left(\left(\begin{matrix}e^{\chi}&y+y_0\\0&e^{-\chi}\end{matrix}\right),
\left(\begin{matrix}1&x^*+x_0\\0&1\end{matrix}\right)\right).
\end{gather*}
As in the proof of the preceding lemma, this is a~cocycle pairing between elements from two (connected) groups. The corresponding
pairing of Lie algebra elements is trivial. So this cocycle pairing is trivial.
\end{proof}

These lemmas imply Theorem~\ref{SU(2)theorem3statement}.

\subsection{Calculating the determinant: scalar case}\label{scalarcase}

Our goal is to calculate the second determinant appearing on the right-hand side of the statement
in Theorem~\ref{SU(2)theorem3statement}. Recall that in that statement we think of~$e^{\chi}$
as a multiplication operator on scalar valued functions. We f\/irst consider the simplest possibility,
which can be handled in the same way as done by Widom in the classical case (see~\cite{W}).

\begin{Proposition} Suppose that $\chi=\chi_++\chi_-$, where $\chi_+$ is holomorphic in $\Sigma$
and $\chi_-$ is holomorphic in $\Sigma^*$. Then
\begin{enumerate}\itemsep=0pt
\item[$(a)$] $\det(A(e^{\chi})A(e^{-\chi}))=\exp(\operatorname{tr}(B(\chi_+)C(\chi_-)))$,
\item[$(b)$] if $\chi$ is $i\mathbb R$ valued, then this equals
\begin{gather*}\exp(-\operatorname{tr}(B(\chi_+)B(\chi_+)^*))=\exp\big({-}\vert B(\chi_+)\vert^2_{\mathcal L_2}\big).
\end{gather*}
\end{enumerate}
In the classical case $\Sigma=D$,
\begin{gather*}\operatorname{tr}(B(\chi_+)B(\chi_+)^*)=\sum_{n=1}^{\infty}n\vert \chi_n\vert^2.
\end{gather*}
\end{Proposition}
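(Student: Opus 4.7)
The plan is to identify $A(e^{\chi})A(e^{-\chi})$ as a multiplicative commutator of two commuting one-parameter subgroups and to evaluate its Fredholm determinant via the Lie algebra $2$-cocycle~\eqref{liealgcocycle}. Since $\chi_+$ is holomorphic in $\Sigma$, one has $C(\chi_+)=0$; iterating the product rule $A(gh)=A(g)A(h)+B(g)C(h)$ yields $A(\chi_+^n)=A(\chi_+)^n$, hence $A(e^{t\chi_+})=e^{tA(\chi_+)}$, and symmetrically $A(e^{s\chi_-})=e^{sA(\chi_-)}$ from $B(\chi_-)=0$. Because scalars commute, $e^{\pm\chi}=e^{\pm\chi_-}e^{\pm\chi_+}$, and combined with $B(e^{\pm\chi_-})=0$ in the product rule, $A(e^{\pm\chi})=A(e^{\pm\chi_-})A(e^{\pm\chi_+})$. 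Writing $X:=A(\chi_-)$ and $Y:=A(\chi_+)$, the problem collapses to computing the determinant of the group commutator
\[A(e^{\chi})A(e^{-\chi})=e^Xe^Ye^{-X}e^{-Y}.\]

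To this end, set $F(s,t):=\det(e^{sX}e^{tY}e^{-sX}e^{-tY})$, so $F(s,0)=F(0,t)=1$. A direct differentiation in $s$, using $\operatorname{Ad}(e^{sX})X=X$ together with cyclicity of the trace applied to the trace-class difference $\operatorname{Ad}(e^{tY})X-X$, yields
\[\partial_s\log F(s,t)=-\operatorname{tr}\bigl(\operatorname{Ad}(e^{tY})X-X\bigr)=-\sum_{k\ge1}\frac{t^k}{k!}\operatorname{tr}(\operatorname{ad}_Y^kX).\]
The product rule gives $[X,Y]=B(\chi_+)C(\chi_-)-B(\chi_-)C(\chi_+)=B(\chi_+)C(\chi_-)$, which is trace class because on spinors $B$ and $C$ differ from the classical Hankel operators by smoothing perturbations, so products of two such Hankel-type operators with smooth symbols lie in $\mathcal L_1$. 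For $k\ge2$, $\operatorname{ad}_Y^kX=[Y,\operatorname{ad}_Y^{k-1}X]$ is the commutator of a bounded operator with a trace-class operator, so its trace vanishes by cyclicity. Only the $k=1$ term survives and contributes $-\operatorname{tr}([Y,X])=\operatorname{tr}(B(\chi_+)C(\chi_-))$; hence $\partial_s\log F(s,t)=t\operatorname{tr}(B(\chi_+)C(\chi_-))$, independent of $s$. Integrating from $s=0$ and evaluating at $s=t=1$ establishes~(a). The main technical obstacle is the trace-class claim for $B(\chi_+)C(\chi_-)$, which ultimately rests on the smoothing nature of the spin-geometric perturbation of the classical projection.

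For~(b), I would use the pre-Hilbert adjoint identities $A(\chi^*)=A(\chi)^*$, $B(\chi^*)=C(\chi)^*$, and $C(\chi^*)=B(\chi)^*$ coming from $M_{\chi^*}=M_\chi^*$. When $\chi$ is $i\mathbb R$-valued along $S$ one has $\chi^*=-\chi$; applying $*$ to the splitting $\chi=\chi_++\chi_-$ and invoking its uniqueness up to a constant (which is annihilated by $B$ and $C$) forces $\chi_+^*=-\chi_-$. Therefore $C(\chi_-)=-C(\chi_+^*)=-B(\chi_+)^*$, and the trace from~(a) becomes $-\operatorname{tr}(B(\chi_+)B(\chi_+)^*)=-\|B(\chi_+)\|_{\mathcal L_2}^2$. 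In the classical case $\Sigma=D$ with $\chi_+=\sum_{n\ge1}\chi_nz^n$, a direct Fourier calculation shows the matrix entry of $B(\chi_+)$ from $z^{-n}$ ($n\ge1$) to $z^m$ ($m\ge0$) equals $\chi_{m+n}$, and summing $|\chi_{m+n}|^2$ and reindexing $p=m+n$ gives $\|B(\chi_+)\|_{\mathcal L_2}^2=\sum_{p=1}^{\infty}p|\chi_p|^2$.
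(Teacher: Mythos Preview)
Your argument is correct and follows the same route as the paper: both reduce $A(e^{\chi})A(e^{-\chi})$ to the group commutator $e^{X}e^{Y}e^{-X}e^{-Y}$ with $X=A(\chi_-)$, $Y=A(\chi_+)$, and both identify $[X,Y]=B(\chi_+)C(\chi_-)$ via the product rule and $B(\chi_-)=0$. The only difference is that the paper invokes the Helton--Howe formula $\det(e^{X}e^{Y}e^{-X}e^{-Y})=e^{\operatorname{tr}[X,Y]}$ as a black box, whereas you reprove this special case by differentiating $F(s,t)$ and using that $\operatorname{ad}_Y^kX$ has zero trace for $k\ge 2$; your treatment of part~(b) and the classical Hilbert--Schmidt computation is also more explicit than the paper's, which merely records the result.
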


\begin{proof}
Because $A(e^{\chi})=A(e^{\chi_-})A(e^{\chi_+})$, $\det(A(e^{\chi})A(e^{-\chi}))$ equals
\begin{gather*}
\det\big( A(e^{\chi_-})A(e^{\chi_+})A(e^{-\chi_-})A(e^{-\chi_+})\big)=\det\big(e^{A(\chi_-)}e^{A(\chi_+)}e^{A(-\chi_-)}e^{A(-\chi_+)}\big).
\end{gather*}
The Helton--Howe formula (see \cite{HH}) implies that this equals $\exp(\operatorname{tr}([A(\chi_-),A(\chi_+)]))$.
Now observe that{\samepage
\begin{align*}
\begin{split}[A(\chi_-),A(\chi_+)]& =A(\chi_-)A(\chi_+)-A(\chi_+)A(\chi_-)\\
& =A(\chi_-)A(\chi_+)-\left(A(\chi_+\chi_-)-B(\chi_+)C(\chi_-)\right)=B(\chi_+)C(\chi_-).
\end{split}
\end{align*}
This implies part~(a).}

For part (b) the only thing we need to comment on is the last equality. This follows from writing out the matrix
for~$B$, relative to the standard basis. Up to a multiple, it also follows abstractly by the~${\rm SU}(1,1)$ symmetry
of~$D$ and Schur's lemma.
\end{proof}

\begin{Remark}
It is obvious that the formula in~(a) depends on the complex structure of~$\Sigma$. Does it depend on the spin structure?
\end{Remark}

The shortcoming of the preceding proposition is that when $\text{genus}(\widehat{\Sigma})>0$, we have not accounted for
zero modes.

\begin{Proposition}
\begin{gather*}
\det\big(A(e^{\chi})A(e^{-\chi})\big)=\mathbf c(e^{\chi_-},e^{\chi_0})\mathbf c(e^{\chi_0},e^{\chi_+})
\det\big(\big\{e^{-A(\chi_+)},e^{A(\chi_-)}\big\}\big)\det\big(A(e^{\chi_0})A(e^{-\chi_0})\big).
\end{gather*}
In the case that $\chi$ has values in $i\mathbb R$, this equals
\begin{gather*}
\det\big(\big\vert \big\{A(e^{-\chi_0}),e^{-A(\chi_+)}\big\}\big\vert^2\big)\det\big(A(e^{\chi_0})A(e^{\chi_0})^*\big)
\exp\big({-}\operatorname{tr}(B(\chi_+)B(\chi_+)^*)\big),\end{gather*}
where $\{A,B\}=ABA^{-1}B^{-1}$ denotes the group commutator.
\end{Proposition}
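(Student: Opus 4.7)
The plan is to exploit the multiplicative form of the linear triangular decomposition $\chi=\chi_-+\chi_0+\chi_+$ from Proposition \ref{hyperfunction}, the cocycle formalism \eqref{cocycle}, and the Helton--Howe formula used in the preceding proposition. Since scalar exponentials commute, $e^{\pm\chi}=e^{\pm\chi_-}e^{\pm\chi_0}e^{\pm\chi_+}$. Because $\chi_-$ is holomorphic in $\Sigma^*$, multiplication by $e^{\pm\chi_-}$ preserves $H_-$, giving $B(e^{\pm\chi_-})=0$; symmetrically $C(e^{\pm\chi_+})=0$. The resulting block-triangular form of $M_{e^{\pm\chi_-}}$ and $M_{e^{\pm\chi_+}}$ yields $A(e^{\pm\chi_-})=e^{\pm A(\chi_-)}$ and $A(e^{\pm\chi_+})=e^{\pm A(\chi_+)}$, which are mutually inverse, together with the exact operator identities
\begin{gather*}
A(e^{\chi})=V\,S\,U^{-1},\qquad A(e^{-\chi})=V^{-1}\,T\,U,
\end{gather*}
where $V=e^{A(\chi_-)}$, $U=e^{-A(\chi_+)}$, $S=A(e^{\chi_0})$, and $T=A(e^{-\chi_0})$. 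Hence $A(e^\chi)A(e^{-\chi})=VSU^{-1}V^{-1}TU$.

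The first displayed formula then follows by the cocycle formula \eqref{cocycle} applied to the splittings of $e^\chi$ and $e^{-\chi}$, combined with cyclicity of the Fredholm determinant and the elementary algebraic identity $UVSU^{-1}V^{-1}=\{U,V\}\cdot(VU)S(VU)^{-1}$ (where $\{U,V\}=UVU^{-1}V^{-1}$). The cocycles $\mathbf c(e^{\chi_-},e^{\chi_0})$ and $\mathbf c(e^{\chi_0},e^{\chi_+})$ in the statement record how $A$ fails to be multiplicative across these two splittings; from \eqref{cocycle} they both reduce to $1$ in our analytic setting (since $B(e^{\chi_-})=0$ and $C(e^{\chi_+})=0$), but are retained for symmetry with eventual generalizations in which these analyticity hypotheses are relaxed. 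Cyclicity rearranges the six-term product into $\{U,V\}\cdot(VU)S(VU)^{-1}\cdot T$; conjugation-invariance of the Fredholm determinant (built into the central extension $\widetilde{\rm GL}(H_+\oplus H_-)_{(2)}$) then collapses $(VU)S(VU)^{-1}T$ to $ST$, yielding $\det(\{U,V\})\det(ST)$.

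For the specialization to $\chi\in i\mathbb R$: the reality condition $\chi^*=-\chi$ and uniqueness of the triangular decomposition force $\chi_0^*=-\chi_0$ and $\chi_-=-\chi_+^*$. The first gives $T=A(e^{\chi_0^*})=A(e^{\chi_0})^*=S^*$, hence $\det(ST)=\det(A(e^{\chi_0})A(e^{\chi_0})^*)$. For the commutator factor I apply Helton--Howe, $\det(\{e^X,e^Y\})=\exp(\operatorname{tr}[X,Y])$, together with the identity $[A(\chi_-),A(\chi_+)]=B(\chi_+)C(\chi_-)$ from the preceding proposition and $C(\chi_-)=-B(\chi_+)^*$ (which follows from $\chi_-=-\chi_+^*$ and the general relation $B(g^*)=C(g)^*$), obtaining $\det(\{U,V\})=\exp(-\operatorname{tr}(B(\chi_+)B(\chi_+)^*))$. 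Substituting into the first formula recovers the second; the factor $\det(|\{T,U\}|^2)$ there represents the manifestly positive reorganization that displays the symmetric commutator of $T=A(e^{-\chi_0})$ and $U=e^{-A(\chi_+)}$ in place of $\{U,V\}$, and in this setting contributes trivially.

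The main technical obstacle is the conjugation-invariance step $\det((VU)S(VU)^{-1}T)=\det(ST)$: since $VU$ is invertible but not of the form $I+\text{trace class}$, the identity is not immediate from cyclicity alone. I would verify it by showing that $(VU)S(VU)^{-1}T-ST$ is trace class with vanishing trace, which reduces to trace-class control of the commutator $[T,VU]$; this in turn follows from smoothness of $\chi$ and the Hilbert--Schmidt property of the relevant spin Hankel operators. More conceptually, lifting the entire computation to the central extension $\widetilde{\rm GL}(H_+\oplus H_-)_{(2)}$ and working within the determinant line bundle makes these conjugation identities canonical on determinant classes.
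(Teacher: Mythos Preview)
Your setup is correct and agrees with the paper: writing $V=e^{A(\chi_-)}$, $U=e^{-A(\chi_+)}$, $S=A(e^{\chi_0})$, $T=A(e^{-\chi_0})$, one has $A(e^{\chi})A(e^{-\chi})=VSU^{-1}V^{-1}TU$, and after cycling you correctly obtain $UVSU^{-1}V^{-1}T=\{U,V\}\cdot (VU)S(VU)^{-1}\cdot T$.

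The gap is the next step. You assert that ``conjugation-invariance collapses $(VU)S(VU)^{-1}T$ to $ST$'', but $(VU)S(VU)^{-1}T$ is \emph{not} a conjugate of $ST$: rather
\[
(VU)S(VU)^{-1}T=\{VU,S\}\cdot ST,
\]
and since both $\{VU,S\}$ and $ST$ are of the form $I+\text{trace class}$, the Fredholm determinant is multiplicative and gives
\[
\det\big((VU)S(VU)^{-1}T\big)=\det\big(\{VU,S\}\big)\,\det(ST).
\]
You have dropped the factor $\det(\{VU,S\})$. Your proposed remedy---showing that $(VU)S(VU)^{-1}T-ST$ is trace class with vanishing trace---does not help: two operators $I+K_1$ and $I+K_2$ with $K_1-K_2$ trace class and $\operatorname{tr}(K_1-K_2)=0$ need not have equal Fredholm determinants.

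This missing factor is precisely where the cocycle terms in the statement come from. The paper keeps $\det(\{UV,S\})$ (equivalently $\det(\{VU,S\})$) and factors it via the group-commutator identity
\[
\{z^{-1}x,y\}=z^{-1}\{x,y\}z\cdot\{z^{-1},y\},
\]
obtaining $\det(\{e^{A(\chi_-)},A(e^{\chi_0})\})\det(\{e^{A(\chi_+)},A(e^{\chi_0})\})$, which is then written in cocycle notation as $\mathbf c(e^{\chi_-},e^{\chi_0})\mathbf c(e^{\chi_0},e^{\chi_+})$. Your interpretation---that these $\mathbf c$'s record the failure of $A$ to be multiplicative across the splitting $e^{\chi}=e^{\chi_-}e^{\chi_0}e^{\chi_+}$ and hence are trivially $1$ because $B(e^{\chi_-})=0$, $C(e^{\chi_+})=0$---mislocates their origin. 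Multiplicativity across that splitting is exact (that is how you got $A(e^{\chi})=VSU^{-1}$ to begin with); the $\mathbf c$'s arise from the noncommutativity on $H_+$ of $A(e^{\chi_0})$ with $e^{\pm A(\chi_\pm)}$, i.e.\ from $\det(\{VU,S\})$, not from the initial decomposition.

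The same issue infects the $i\mathbb R$-valued specialization: the factor $\det\big(\big|\{A(e^{-\chi_0}),e^{-A(\chi_+)}\}\big|^2\big)$ in the second displayed formula is the real-form reorganization of exactly the piece you discarded, so your claim that it ``contributes trivially'' is unproven. Your identification of the other two factors (via $\chi_-=-\chi_+^*$, $\chi_0^*=-\chi_0$, and Helton--Howe) is fine.
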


\begin{proof}
\begin{gather*}
A(e^{\chi})A(e^{-\chi})=e^{A(\chi_-)}A(e^{\chi_0})e^{A(\chi_+)}e^{-A(\chi_-)}
A(e^{-\chi_0})e^{-A(\chi_+)}.
\end{gather*}
The right-hand side is of the form $xyzx^{-1}Yz^{-1}$. The conjugate of this by $z^{-1}$
equals
\begin{gather*}
z^{-1}xyzx^{-1}Y=\big\{z^{-1}x,y\big\}y\big\{z^{-1},x\big\}y^{-1}yY,
\end{gather*}
where we have temporarily assumed that $y$ is invertible.
This implies
\begin{gather*}
\det\big(A(e^{\chi})A(e^{-\chi})\big) =\det\big(\big\{e^{-A(\chi_+)} e^{A(\chi_-)},A(e^{\chi_0})\big\}\big)\\
\hphantom{\det\big(A(e^{\chi})A(e^{-\chi})\big) =}{}\times
\det\big(\big\{e^{-A(\chi_+)},e^{A(\chi_-)}\big\}\big)\det\big(A(e^{\chi_0})A(e^{-\chi_0})\big)
\end{gather*}
(each of these determinants is the determinant of an operator of the form identity + trace class).
Using the Helton--Howe formula, this equals
\begin{gather*}
\det\big(\big\{e^{-A(\chi_+)} e^{A(\chi_-)},A(e^{\chi_0})\big\}\big)
\exp\big({-}\operatorname{tr}(B(\chi_+)C(\chi_-))\big)\det\big(A(e^{\chi_0})A(e^{-\chi_0})\big).
\end{gather*}

Now we use the identity
\begin{gather*}
\big\{z^{-1}x,y\big\}=z^{-1}xyx^{-1}y^{-1}zz^{-1}yzy^{-1}.
\end{gather*}
This implies that the f\/irst determinant in the
expression above satisf\/ies
\begin{gather*}
\det\big(\big\{e^{-A(\chi_+)} e^{A(\chi_-)},A(e^{\chi_0})\big\}\big)=
\det\big(\big\{e^{A(\chi_-)},A(e^{\chi_0})\big\}\big)\det\big(\big\{e^{A(\chi_+)},A(e^{\chi_0})\big\}\big).
\end{gather*}
In terms of the group cocycle notation, this equals
\begin{gather*}
\mathbf c(e^{\chi_-},e^{\chi_0})\mathbf c(e^{\chi_0},e^{\chi_+}).
\end{gather*}
This completes the proof.
\end{proof}

\begin{Remark}In the classical case this determinant is nonvanishing. It would seem unlikely
that this is true in general. But the expression we have produced does not seem to help in
deciding this question. It should be possible to calculate the determinant for~$\exp(\chi_0)$ exactly.
\end{Remark}

\appendix
\section{Hyperfunctions and holomorphic bundles}\label{hyperfunctions}

\subsection{Hyperfunctions on a circle}

Suppose that $X$ is an oriented analytic compact $d$-manifold. Let
$C^{\omega}\Omega^{d}(X)$ denote the vector space of real analytic forms of order
$d$ on $X$. There is a standard topology on this space.
By def\/inition the space of (real) hyperfunctions on $X$, denoted $\operatorname{Hyp}(X;\mathbb R)$,
is the dual of this topological vector space; the
space of complex hyperfunctions is the complexif\/ication, denoted $\operatorname{Hyp}(X;\mathbb C)$, or more simply
$\operatorname{Hyp}(X)$ (see  \cite[p.~35]{Helgason}).
In the case of $X=S^1$, any complex hyperfunction has a unique representation of the
form
\begin{gather*}
f=f_-+f_0+f_+,
\end{gather*}
where $f_-\in H^0(\Delta^*,(\infty);\mathbb C,0)$, $f_0\in \mathbb C$, and $f_+\in H^0(\Delta,0;\mathbb C,0)$
(this is how hyperfunctions were f\/irst introduced, see~\cite{Sato}).
The corresponding functional is given by
\begin{gather*}
C^{\omega}\Omega^{1}\big(S^1\big)\to \mathbb C\colon \ \omega\to \int_{\vert z\vert
=1-\epsilon}(f_0+f_+)\omega+\int_{\vert z\vert=1+\epsilon}f_-\omega,
\end{gather*}
where $\omega=gdz$ and $g$ is complex analytic in a collar of
$S^1$, for suf\/f\/iciently small $\epsilon$.

An integrable function $f\colon S^1\to \mathbb C$ naturally def\/ines a hyperfunction by integrating an analytic form against $f$
around $S^1$. Note that for the linear triangular decomposition for the Fourier series of~$f$, where $f_{\pm}=\sum\limits_{\pm n>0}f_nz^n$,
$f_{\pm}$ are not
necessarily integrable on the circle, but they do def\/ine holomorphic functions in~$\Delta$ ($\Delta^*$, respectively).

\subsection{Nonabelian hyperfunctions on a circle}\label{hyperfunctiondfn}

Suppose that $G$ is a simply connected complex Lie group, e.g., $G={\rm SL}(2,\mathbb C)$.

In~\cite{PS} it is observed that the group of analytic loops,
$C^{\omega}(S^1;G)$, is a complex Lie group. A neighborhood
of the identity consists of those loops which have a
unique Riemann--Hilbert factorization
\begin{gather}\label{RHfactorization}
g=g_{-}\cdot g_0\cdot g_{+},
\end{gather}
where $g_{-}\in H^0(D^{*},(\infty) ;G,1)$, $g_0\in G$, $g_{+}\in H^
0(D,0;G,1)$. A~model for this neighborhood is
\begin{gather*}
H^1(D^{*},\mathfrak g)\times G\times H^1(D,\mathfrak g),
\end{gather*}
where the linear coordinates are determined by
$\theta_{+}=g_{+}^{-1}(\partial g_{+})$, $\theta_{-}=(\partial g_{
-})g_{-}^{-1}$. The (left or right) translates
of this neighborhood by elements of $L_{\rm f\/in}K$ cover
$H^0(S^1,G)$; a key point is that the transition functions
are functions of a~f\/inite number of variables, in an
appropriate sense (see~\cite[Chapter~2, Part~III]{Pi}).

The hyperfunction completion, $\operatorname{Hyp}(S^1,G)$, is modeled on the
space
\begin{gather*}
H^1(\Delta^{*},\mathfrak g)\times G\times H^1(\Delta ,\mathfrak g)
\end{gather*}
and the transition functions are
obtained by continuously extending the transition
functions for the analytic loop space of the preceding
paragraph. The global def\/inition is
\begin{gather}\label{hyperfunctiondnf}
\operatorname{Hyp}\big(S^1,G\big)=\lim_{r\downarrow 1}H^0\big(\{1<\vert z\vert <r\},G\big)\times_{
H^0(S^1,G)}\lim_{r\uparrow 1}H^0\big(\{r<\vert z\vert <1\},G\big).
\end{gather}
From this point of view, a hyperfunction is an equivalence class $[g,h]$, where $g$ ($h$) is a $G$-valued holomorphic
map in an annulus to the left (right, respectively) of~$S^1$). The correct way to topologize this space does
not seem clear from this point of view, and we will put this aside.
From the global def\/inition it is clear that the group
$H^0(S^1,G)$ acts naturally from both the left and right of
$\operatorname{Hyp}(S^1,G)$, e.g., for the left action, $g_l\colon [g,h]\to [g_lg,h]$. There is a generalized Birkhof\/f decomposition
\begin{gather*}
\operatorname{Hyp}\big(S^1,G\big)=\bigsqcup_{\lambda\in \operatorname{Hom}(S^1,T)}\Sigma^{\rm hyp}_{\lambda},\qquad\Sigma_{
\lambda}^{\rm hyp}=H^0 (\Delta^{*},G )\cdot\lambda\cdot H^0(\Delta ,G).
\end{gather*}
The top stratum (the
piece with $\lambda =1$ above) is open and dense, and for each
point~$g$ in the top stratum, there is a unique factorization as
in~(\ref{RHfactorization}), where $g_{\pm}$ are
$G$-valued holomorphic functions in the open disks~$\Delta$
and~$\Delta^{*}$, respectively. We will refer to
$g_{-},g_0,g_{+}$ ($\theta_{ -},g_0,\theta_{+}$, respectively) as
the Riemann--Hilbert coordinates (linear Riemann--Hilbert
coordinates, respectively) of~$g$ (see~\cite[Chapter~2,  Part~III]{Pi}).

Given a continuous function $g\colon S^1 \to G$ there exists a generalized Riemann--Hilbert factorization,
where the factors $g_{\pm}$ are not necessarily continuous (see  \cite[Theorem 1.1 of Chapter~VIII]{CG}
for a precise statement). In this way
any reasonable loop in~$G$ can be regarded as an element of $\operatorname{Hyp}(S^1,G)$.

The space $\operatorname{Hyp}(S^1,G)$ depends only on the orientation and real analytic structure of~$S^1$, i.e., there
is a natural action of real analytic homeomorphisms on hyperfunctions: in terms of the global def\/inition~(\ref{hyperfunctiondnf})
\begin{gather*}
\sigma\colon \ [g,h]\to \big[\sigma^{-*}g,\sigma^{-*}h\big].
\end{gather*}
Consequently if $S$ is an oriented real analytic one-manifold, then $\operatorname{Hyp}(S,G)$ is well-def\/ined.

\subsection[Hyperfunctions and holomorphic $G$ bundles]{Hyperfunctions and holomorphic $\boldsymbol{G}$ bundles}\label{bundles}

Suppose that $\widehat{\Sigma}$ is a closed Riemann surface. In this subsection it is not necessary
to assume that $\widehat{\Sigma}$ is a~double. We also suppose that $c\colon S^1\to\widehat{\Sigma}$
is a~real analytic embedding, i.e., a~simple analytic loop. We could more generally suppose that $c$ is an embedding of several
disjoint analytic loops, but we will focus on one to simplify notation. A~basic (nongeneric) example is
the case when $\widehat{\Sigma}$ is a double and~$c$ is a parameterization of~$S$, the f\/ixed point set
of the involution~$R$.

Let $\mathcal O$ denote the structure sheaf of $\widehat{\Sigma}$. There is an
associated mapping
\begin{gather*}
E=E_{c,\mathcal O}\colon \ \operatorname{Hyp}\big(S^1,G\big)\to H^1(\mathcal O_G)\colon \ [g,h]\to E_{c}([g,h]),
\end{gather*}
where $H^1(\mathcal O_G)$ denotes the set of isomorphism classes of holomorphic
$G$-bundles on $\widehat{\Sigma}$. For an ordinary analytic $G$-valued loop $g$, $E$ maps the loop
to the isomorphism class of the holomorphic $G$-bundle def\/ined by using the loop as a~holomorphic transition function in a tubular neighborhood of the image of~$c$.

\begin{Remark}\label{transitionconvention} Our convention for transition functions is the following.
A~(holomorphic) section of~$E(g)$ (which one should think of as a frame) is a (holomorphic) function $s\colon \widehat{\Sigma}{\setminus} c\to G$ such that $s_+g^{-1}=s_-$, where $s_+ (s_-)$
is the restriction of $s$ to a suf\/f\/iciently small annulus to the left (respectively, the right) of the
oriented loop $c$. Consequently for an associated bundle $E(g)\times_G V$ a (holomorphic) section is represented by
a (holomorphic) function $v\colon \widehat{\Sigma}{\setminus} c\to V$ such that $gv_+=v_-$.

With this convention, for scalar transition functions, the degree of a line bundle is the negative of the degree of a corresponding
transition function, e.g., for dif\/ferentials on~$\mathbb P^1$, $dw=dz(-z^{-2})$ ($s_-=(dw)$ and $s_+=(dz)$); the degree of the canonical bundle is~$2$, and the degree of the transition function~$-z^2$ is~$-2$.
\end{Remark}

The point is that $E$ can be extended naturally to hyperfunctions in the following way. The map $c$ extends
uniquely to a holomorphic embedding
$c\colon \{1-\epsilon <\vert z\vert <1+\epsilon \}\to\widehat{\Sigma}$ for some $
\epsilon >0$. Given the pair
$(g,h)$, we obtain a holomorphic bundle on $\widehat{\Sigma}$ by using~$g$ as
a transition function on an $\epsilon'$-collar to the left of~$c$ and~$h$ as a transition function on an $\epsilon'$-collar to the right of~$c$, for some $\epsilon'<\epsilon$, depending upon the pair $(g,h
)$. The isomorphism class of this bundle is independent of the
choice of $\epsilon'$, and depends only upon $[g,h]\in \operatorname{Hyp}(S^1,G)$.

The basic properties of the mapping~$E_{c}$ are
summarized as follows.

\begin{Proposition}\label{transitionlemma} \quad
\begin{enumerate}\itemsep=0pt
\item[$(a)$] If $\phi\in C^{\omega}\operatorname{Hom}(S^1)$, then the induced map
\begin{gather*}
\operatorname{Hyp}\big(S^1,G\big)\stackrel{\phi}{\rightarrow} \operatorname{Hyp}\big(S^1,G\big)\stackrel{E_{c}}{\rightarrow}H^1(\mathcal O_G)
\end{gather*}
equals $E_{c\circ\phi^{-1}}$.
\item[$(b)$] There is a holomorphic action
\begin{gather*}
H^0\big(\widehat{\Sigma}{\setminus} c\big(S^1\big)\big)\times \operatorname{Hyp}\big(S^1,G\big)\to \operatorname{Hyp}\big(S^1,G\big)\colon \ f,[g,h]\to
\big[f\vert_{S^1_{-}}g,hf\vert_{S^1_{+}}^{-1}\big].
\end{gather*}

\item[$(c)$] Inclusion and the mapping $E_{c}$ induce isomorphisms of sets
\begin{gather*}
H^0\big(S^1,G\big)/H^0\big(\overline{\widehat{\Sigma}{\setminus} c},G\big)\to \operatorname{Hyp}\big(S^1,G\big)/H^0\big(\widehat{\Sigma}{\setminus} c\big)\to H^1(\mathcal O_G),
\end{gather*}
where $\overline{\widehat{\Sigma}{\setminus} c}$ denotes the closed Riemann surface obtained by cutting along $c$ and adding two boundary components
 $($see  {\rm \cite[Section~8.11]{PS})}.
 \end{enumerate}
\end{Proposition}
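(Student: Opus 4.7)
The three parts of the proposition are treated in turn.

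Part (a) is a naturality statement, which I would verify on the dense stratum of real analytic loops where both $E_c$ and the reparametrization action are given by explicit transition function formulas. If $\phi \in C^\omega\operatorname{Hom}(S^1)$, it extends to a biholomorphism of a collar of $S^1$, and using an analytic loop $g$ as a transition function along the embedding $c \circ \phi^{-1}$ yields, by a change of variable in the cocycle, the same bundle as using $\phi^{-*} g$ along $c$. Both sides extend continuously to $\operatorname{Hyp}(S^1, G)$ via the model from the excerpt, so the identity propagates by density.

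For Part (b), well-definedness on equivalence classes is routine: the representative freedom in $[g, h]$ is multiplication by a $G$-valued function $k$ holomorphic on an annular neighborhood of $S^1$, and this commutes with multiplication on the outside by the one-sided analytic boundary values $f|_{S^1_-}$ and $f|_{S^1_+}$ of $f \in H^0(\widehat{\Sigma} \setminus c, G)$. Holomorphicity of the action is immediate in the local Riemann--Hilbert coordinates of the excerpt.

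For Part (c), consider the chain $H^0(S^1, G)/H^0(\overline{\widehat{\Sigma} \setminus c}, G) \to \operatorname{Hyp}(S^1, G)/H^0(\widehat{\Sigma} \setminus c, G) \to H^1(\mathcal{O}_G)$. For surjectivity of the composite, given a holomorphic principal $G$-bundle $E$ on $\widehat{\Sigma}$, pull $E$ back to the cut surface $\overline{\widehat{\Sigma} \setminus c}$, which embeds into a slightly larger open non-compact Riemann surface $X$ (obtained by enlarging the two boundary collars). The surface $X$ is Stein by Behnke--Stein; since $G$ is complex and simply connected, the Oka--Grauert principle forces the pullback bundle on $X$ to be holomorphically trivial. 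Restricting the trivialization to the two boundary circles of the cut surface yields an analytic transition function $\gamma \in H^0(S^1, G)$ with $E_c(\gamma) = E$. Injectivity of the second arrow modulo the action of (b) is the standard fact that two cocycles give isomorphic bundles iff they differ by a coboundary, which on the adapted two-chart cover is precisely a $G$-valued holomorphic function on $\widehat{\Sigma} \setminus c$. For the first arrow, restricted to the analytic stratum $H^0(S^1,G) \subset \operatorname{Hyp}(S^1, G)$, unwinding the hyperfunction equivalence shows that the action of some $(f_-, f_+) \in H^0(\widehat{\Sigma} \setminus c, G)$ sends an analytic loop to another analytic loop iff both $f_\pm$ extend analytically across $c(S^1)$, i.e., $(f_-, f_+) \in H^0(\overline{\widehat{\Sigma} \setminus c}, G)$, in which case the formula is $\gamma \mapsto f_-|_{S^1} \gamma f_+|_{S^1}^{-1}$.

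The main obstacle is the Oka--Grauert step underlying surjectivity: every holomorphic principal $G$-bundle on an open Riemann surface is holomorphically trivial once $G$ is simply connected, which reduces by Behnke--Stein to Grauert's theorem that topological and holomorphic classifications coincide on Stein manifolds (together with the vanishing of the relevant topological obstructions). Everything else is cocycle bookkeeping on the adapted cover.
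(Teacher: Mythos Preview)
The paper does not supply a proof of this proposition; it is stated as a summary of basic properties, with part~(c) referred to \cite[Section~8.11]{PS}. Your argument is a correct and reasonable way to fill in what the paper leaves implicit, and the Behnke--Stein plus Oka--Grauert step you isolate is indeed the substantive point behind the cited reference.

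Two small remarks. First, in part~(a) you argue by density and continuous extension, but the paper explicitly flags that ``the correct way to topologize this space does not seem clear''. It is safer to verify the identity directly on representatives: both the reparametrization action $[g,h]\mapsto[\phi^{-*}g,\phi^{-*}h]$ and the map $E_c$ are defined concretely on pairs $(g,h)$, and one checks by hand that using $(\phi^{-*}g,\phi^{-*}h)$ as transition data along $c$ gives a bundle canonically isomorphic to the one obtained from $(g,h)$ along $c\circ\phi^{-1}$. No appeal to a topology on $\operatorname{Hyp}(S^1,G)$ is needed. Second, in part~(c) you establish surjectivity of the composite and injectivity of the second arrow, but you should make explicit that surjectivity of the \emph{first} arrow follows from these two: given any hyperfunction class, its image bundle is hit by some analytic loop $\gamma$, and injectivity of the second arrow then puts $\gamma$ in the same $H^0(\widehat{\Sigma}\setminus c,G)$-orbit as the original hyperfunction.
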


An alternate way to think about the projection $E_{c,\mathcal O}$ is
as follows. Suppose that $(g,h)$ represents $[g,h]\in \operatorname{Hyp}(S^1,G)$.
In the $C^{\infty}$ category the principal bundle def\/ined using the transition
functions~$(g,h)$ is trivial. Therefore we can f\/ind smooth functions
\begin{gather*}s\colon \  \widehat{\Sigma}{\setminus} c\big(S^1\big)\to G \qquad \text{and} \qquad s_0\colon \ S^{1\epsilon}\to G
\end{gather*}
(where $S^{1\epsilon}$ is an $\epsilon$-collar neighborhood of $S^
1$ in $\widehat{\Sigma}$, and
the $\epsilon$-collar will depend upon $(g,h)$) such that
\begin{gather*}
s_{-}=gs_0 \qquad \text{and} \qquad s_0=hs_{+}
\end{gather*}
on an $\epsilon$ collar to the left (respectively, the right) of~$
S^1$.
If $s$ is replaced by $s'$, then there exists $f\in C^{\infty}
(\widehat{\Sigma} ,G)$
such that $s'=sf$, i.e., $s_{-}'=s_{-}f$ and so on with~$+$ and~$
0$
in place of~$-$.

Def\/ine $a=s^{-1}\overline{\partial }s$. Then the gauge equivalence class of $
a$
depends only upon $[g,h]$. For if we change $g$, $h$ to
$g_{-}gg_0$, $g_0^{-1}hg_{+}$, where $g_{\pm}$ is holomorphic in $\widehat{\Sigma}^{
\pm}$, and $g_0$ is
holomorphic in~$S^{1\epsilon}$, then the choice of $s$ is modif\/ied in
an obvious way, and this does not change~$a$, and when
$s\to s'=sf$, then $a\to f^{-1}af+f^{-1}\overline{\partial }f$ (a gauge transformation).
Thus we obtain a well-def\/ined map
\begin{gather*}
\operatorname{Hyp}\big(S^1,G\big)\to\Omega^{0,1}\big(\widehat{\Sigma} ,\mathfrak g\big)/C^{\infty}(\widehat{\Sigma}, G)=H^1(\mathcal O_G).
\end{gather*}

\subsection[The image of $LK$]{The image of $\boldsymbol{LK}$}\label{intuition}

Given a simple loop $c$ on a closed Riemann surface $\widehat{\Sigma}$, $H^0(S^1,G)$ maps onto
the set of all isomorphism classes of holomorphic~$G$ bundles on~$\widehat{\Sigma}$. The main point of this subsection
is to show that, assuming $K$ is simply connected, the subgroup of~$K$ valued loops also maps onto the set of all isomorphism classes of holomorphic bundles.

We f\/irst explain why this is true generically, but with critical exceptions, for non-simply connected~$K$.

\begin{Proposition} Suppose $K=\mathbb T$. The map from circle valued loops $($of degree zero$)$
to isomorphism classes of holomorphic line bundles $($of degree zero$)$,
\begin{gather}\label{abeliancase}
E_c\colon \ (L\mathbb T)_0 \to \operatorname{Jac}\big(\widehat{\Sigma}\big)
\end{gather}
is not surjective if and only if~$c$ is a straight line with respect to the flat geometry defined by
some holomorphic differential.
\end{Proposition}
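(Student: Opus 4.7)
The plan is to identify $E_c$ with the Mayer--Vietoris connecting map applied to purely imaginary hyperfunctions, reduce surjectivity onto $\operatorname{Jac}(\widehat{\Sigma})$ to an equality of real subspaces inside $H^{0,1}(\widehat{\Sigma})$, and then dualize against $H^{1,0}(\widehat{\Sigma})$ via the Lemma near~\eqref{connectmap} to produce a holomorphic differential for which $c$ is a straight line.

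A degree zero loop has the form $g=\exp(i\chi)$ with $\chi\in C^{\omega}(c;\mathbb{R})$, well-defined modulo $2\pi\mathbb{Z}$. Splitting $i\chi=u_+ - u_-$ smoothly across $c$ (in the non-separating case this is carried out on the cut surface, whose boundary has two components both mapping to~$c$) and running the $\bar\partial$ construction of Appendix~\ref{bundles}, one checks that $E_c(g)\in\operatorname{Jac}(\widehat{\Sigma})=H^{0,1}(\widehat{\Sigma})/H^1(\widehat{\Sigma},\mathbb{Z})$ equals, up to a nonzero scalar, the class of $\delta(i\chi)$, where $\delta\colon H^0(c,\mathbb{C})\to H^{0,1}(\widehat{\Sigma})$ is the Mayer--Vietoris connecting map of Section~\ref{LTF}. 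Setting $V:=\delta\bigl(iC^{\omega}(c;\mathbb{R})\bigr)\subset H^{0,1}(\widehat{\Sigma})$, surjectivity of~\eqref{abeliancase} amounts to $V+H^1(\widehat{\Sigma},\mathbb{Z})=H^{0,1}(\widehat{\Sigma})$. Because $V$ is a subspace of the finite-dimensional real vector space $H^{0,1}(\widehat{\Sigma})$ (hence automatically closed) and $H^1(\widehat{\Sigma},\mathbb{Z})$ is a full-rank lattice of real rank $2g$, a proper $V$ would give a countable union of translates of measure zero, which cannot cover $\mathbb{R}^{2g}$. So this is equivalent to $V=H^{0,1}(\widehat{\Sigma})$ as real vector spaces.

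By the Lemma near~\eqref{connectmap}, $\delta$ is transpose to the restriction map $H^{1,0}(\widehat{\Sigma})\to C^{\omega}\Omega^1(c,\mathbb{C})$ under the pairing $\langle\delta(f),\omega\rangle=\int_c f\omega$. The real dual of $H^{0,1}(\widehat{\Sigma})$ is identified with $H^{1,0}(\widehat{\Sigma})$ via the $\mathbb{R}$-bilinear pairing $(\omega,\alpha)\mapsto\operatorname{Re}\int_{\widehat{\Sigma}}\omega\wedge\alpha$, which is nondegenerate: the complex Serre pairing is nondegenerate, and the substitution $\alpha\mapsto i\alpha$ interchanges real and imaginary parts, so vanishing of the real part for all $\alpha$ forces vanishing of the whole integral. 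Therefore $V\neq H^{0,1}(\widehat{\Sigma})$ iff there is a nonzero $\omega\in H^{1,0}(\widehat{\Sigma})$ such that $\operatorname{Re}\int_c i\chi\,\omega=-\int_c \chi\operatorname{Im}(\omega)=0$ for every real $\chi$, i.e., $\operatorname{Im}(\omega)|_c=0$ as a real $1$-form on $c$.

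Finally I would translate this to geometry. Writing $\omega|_c=\omega(c'(t))\,dt$, the condition $\operatorname{Im}(\omega)|_c=0$ says $\omega(c'(t))\in\mathbb{R}$ for all $t$, so away from the zeros of $\omega$ the local primitive $w=\int\omega$ (in which $\omega=dw$) sends $c$ to a curve with tangent of constant argument $0\pmod\pi$, i.e., to a horizontal straight line in the flat $w$-plane. Allowing $\omega$ to be rescaled by a unimodular scalar yields lines of any argument, so the existence of some $\omega\neq 0$ with $\operatorname{Im}(\omega)|_c=0$ is exactly the existence of a holomorphic differential whose associated flat geometry makes $c$ a straight line. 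The main obstacle is the first step: verifying that $E_c$ on $(L\mathbb{T})_0$ really is a nonzero scalar multiple of $\delta\circ(i\,\cdot\,)$ modulo the integral lattice, especially in the non-separating case where the splitting $i\chi=u_+-u_-$ lives on the cut surface; once this bookkeeping and the closedness of $V$ in finite dimensions are settled, the dualization and geometric interpretation are routine.
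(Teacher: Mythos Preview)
Your proposal is correct and follows essentially the same route as the paper: reduce to surjectivity of the connecting map on real (or, equivalently, purely imaginary) functions, dualize against $H^{1,0}(\widehat{\Sigma})$ to obtain the vanishing of the real (or imaginary) part of some nonzero holomorphic differential along $c$, and read this off as the straight-line condition for the associated flat structure. The paper's proof is more compressed---it does not spell out the lattice argument for why $V+H^1(\widehat{\Sigma},\mathbb{Z})=H^{0,1}$ forces $V=H^{0,1}$, nor does it flag the non-separating case---but the substance is the same; your condition $\operatorname{Im}(\omega)|_c=0$ and the paper's $\operatorname{Re}(\omega)|_c=0$ differ only by replacing $\omega$ with $i\omega$.
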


\begin{proof} We must determine when the connecting map, as in~(\ref{connectmap}),
\begin{gather*}
C^{\omega}(\operatorname{Im}(c);\mathbb R) \to H^{0,1}\big(\hat \Sigma\big)\colon \ f\to \bar \partial F
\end{gather*}
is surjective, where in the $C^{\infty}$ category, $f=F_+-F_-$, $F$ is a smooth function on the complement of~$c$,
with limits $F_{\pm}$ from the left (right, respectively) of~$c$. This map fails to be surjective if and only if
there exists a nonzero holomorphic dif\/ferential $\omega$ such that $\operatorname{Re}(\int_{\hat \Sigma} \bar\partial F\wedge \omega)=0$ for all~$f$, i.e.,
there is a nonzero real harmonic one form $\eta=\omega+\overline \omega$ such that $\int_c f\eta=0$ for all
real functions on~$c$, which is equivalent to saying that the pullback of~$\eta$ to~$c$ vanishes. Locally $\omega=wdz$ and the corresponding f\/lat metric (with conical singularities at the zeros of $\omega$) is $|dW|^2=|w|^2|dz|^2$. The vanishing of the pullback of
$\operatorname{Re}(dW)$ along $c$ is equivalent to saying that~$c$ is a straight line.
\end{proof}

Suppose that $\widehat{\Sigma}$
is a double of $\Sigma$, a compact Riemann surface with boundary, as in the text,
and $\operatorname{Im}(c)=S$. The Jacobian of $\widehat{\Sigma}$ f\/its into a short exact sequence
\begin{gather*}
0\to H^1\big(\widehat{\Sigma},2\pi \mathbb Z\big) \to H^1\big(\widehat{\Sigma},\mathbb R\big) \to \operatorname{Jac}\big(\widehat{\Sigma}\big)\to 0.
\end{gather*}
$R$ acts equivariantly on this sequence, using pullback. $R$ also acts equivariantly on the projection
$(LT)_0 \to \operatorname{Jac}(\widehat{\Sigma})$, where the action on transition functions along~$S$ is given by
\begin{gather*}
g \to g^*:=(R^*g)^{-1}.
\end{gather*} In this case the image consists of line bundles having an antiholomorphic ref\/lection symmetry
compatible with $R$, and these correspond to cohomology classes
which are f\/ixed by~$R$. This is precisely the kind of (sub-line bundle) degeneracy that we encountered in Theorem~\ref{SU(2)theorem2}.

Suppose now that $K$ is simply connected.
Given $R$ and the involution $\tau$ def\/ining $K$ as a~real form of~$G$, there is an involution of $\Omega^0(S,G)$
which f\/ixes $\Omega^0(S,K)\colon g\to \tau(g\circ R)$. This does not have an extension to $\operatorname{Hyp}(S,G)$ (in sharp contrast
to the abelian
case). For in the nonabelian case this involution is not compatible with the action in part~(b)
of Proposition~\ref{transitionlemma}. Consequently there does not exist a real form for isomorphism classes of $G$ bundles
on $\widehat{\Sigma}$ which would conf\/ine the image of $\Omega^0(S,K)$, as in the abelian case.

In the nonabelian case we conjecture that the ordered product map
\begin{gather*}H^0(\Sigma^*,G) \times \Omega^0(S,K)\times H^0(\Sigma,G) \to \Omega^0(S,G)\end{gather*}
is surjective. Our objective is to explain why this is plausible.

For a group $H$ use left translation to identify $H\times \mathfrak h$ with $TH$. With this convention
at the point $(g_-,k,g_+)$, the derivative of the ordered product mapping is given by the formula
\begin{gather*}
H^0(\Sigma^*, \mathfrak g) \times \Omega^0(S,\mathfrak k) \times H^0(\Sigma,\mathfrak g)\to \Omega^0(S,\mathfrak g),\\
 (X_-,x,X_+)\to \frac{d}{dt}\Big\vert_{t=0}\big(g_+^{-1}k^{-1}g_-^{-1}g_-e^{tX_-}ke^{tx}g_+e^{tX_+}\big)=
g_+^{-1}k^{-1}X_-kg_++g_+^{-1}xg_++X_+.
\end{gather*}

\begin{Lemma} The derivative of the ordered product map at $(g_-,k,g_+)$ is critical $($i.e., not surjective$)$ if
$k$ is degenerate in the sense that
\begin{gather*}
\cap_{z\in S^1}\operatorname{ker}(\operatorname{Ad}(k(z))\ne \{0\}.
\end{gather*}
\end{Lemma}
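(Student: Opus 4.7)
The plan is to exhibit a nonzero vector in $\ker(dP)$ at $(g_-, k, g_+)$ and then conclude non-surjectivity by a duality argument. I interpret the hypothesis as saying that the common fixed subspace $Z := \bigcap_{z \in S} \ker(\operatorname{Ad}(k(z)) - I) \subset \mathfrak{g}$ is nontrivial. Since each $\operatorname{Ad}(k(z))$ with $k(z) \in K$ preserves both $\mathfrak{k}$ and $i\mathfrak{k}$, the subspace $Z$ is invariant under the Cartan involution $\tau$ defining $K \subset G$, so we may fix a nonzero $Y \in Z$ lying in $\mathfrak{k}$ (or $i\mathfrak{k}$).

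Using the relation $k^{-1} Y k = Y$ on $S$, direct substitution into the displayed formula for $dP$ gives
\[
dP(Y,\, 0,\, -g_+^{-1} Y g_+) \;=\; g_+^{-1} k^{-1} Y k\, g_+ \;+\; 0 \;-\; g_+^{-1} Y g_+ \;=\; 0,
\]
with $Y \in H^0(\Sigma^*, \mathfrak{g})$ viewed as a constant section and $-g_+^{-1} Y g_+ \in H^0(\Sigma, \mathfrak{g})$ since $g_+$ is holomorphic and invertible on $\Sigma$. This produces the desired nonzero kernel element.

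To upgrade this to non-surjectivity, I would pair against the functional $L(\xi) := \int_S \operatorname{tr}(g_+^{-1} Y g_+ \cdot \xi)\, \eta$ defined using a global holomorphic differential $\eta$ on $\widehat{\Sigma}$. By cyclicity of trace together with $\operatorname{Ad}(k) Y = Y$, the $X_-$ contribution equals $\int_S \operatorname{tr}(Y X_-)\, \eta$, which vanishes because $\operatorname{tr}(Y X_-) \eta$ extends holomorphically to $\Sigma^*$ and Stokes's theorem applies; the $X_+$ contribution vanishes similarly by Stokes on $\Sigma$. Hence on the holomorphic factors of the source, $L$ annihilates the image.

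The main obstacle is the middle term $L(g_+^{-1} x g_+) = \int_S \operatorname{tr}(Yx)\, \eta$, which is not automatically zero for $x \in \Omega^0(S, \mathfrak{k})$. To close this gap I would either refine the construction by combining $L$ with its image under the Cartan involution $\tau$ (so as to produce a real-linear functional that annihilates the $\mathfrak{k}$-valued contribution pointwise), or fall back on a Fredholm-theoretic argument: in the appropriate Sobolev / Hilbert--Schmidt completion analogous to the embedding~\eqref{embedding}, the derivative $dP$ should be Fredholm of index zero with respect to the polarizations used throughout the paper, and then the existence of a nontrivial kernel element immediately yields a nontrivial cokernel, hence non-surjectivity.
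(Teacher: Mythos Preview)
Your kernel computation is correct: the triple $(Y, 0, -g_+^{-1}Yg_+)$ genuinely lies in $\ker dP$. But this is a detour that does not, by itself, establish the lemma. Non-surjectivity requires a nonzero \emph{cokernel} element, and you acknowledge that your candidate functional $L$ fails on the middle $\Omega^0(S,\mathfrak k)$ summand. Neither of your proposed repairs is carried out: the Cartan-involution refinement is only gestured at, and the Fredholm index-zero claim is asserted without proof. As written, the argument is incomplete.

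The paper bypasses the kernel entirely and attacks the cokernel directly. After conjugating by $g_+$ (which does not affect surjectivity), the derivative reduces to $(X_-,x,X_+)\mapsto X_- + x + kX_+k^{-1}$. One then computes the adjoint with respect to the invariant form and the real measure $(2\pi i)^{-1}dk$ on $S$: it sends $\phi$ to $(\phi_-,\ \phi_{\mathfrak k},\ (k^{-1}\phi k)_+)$. A nonzero element of this adjoint's kernel is $\phi_0=\chi_0 X$, where $\chi_0$ is any scalar \emph{zero mode} with $\chi_0=\chi_0^{*}$ and $X\in Z$ is chosen with the reality $X^{*}=X$ (i.e.\ $X\in i\mathfrak k$, possible since $Z$ is $\tau$-stable). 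The zero-mode condition kills the $(\cdot)_-$ and $(\cdot)_+$ projections after using $k^{-1}Xk=X$, and the choice $X\in i\mathfrak k$ kills the $\mathfrak k$-projection. This immediately gives non-surjectivity.

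Your functional $L(\xi)=\int_S \operatorname{tr}(g_+^{-1}Yg_+\cdot\xi)\,\eta$ is morally the same object, but with the wrong ingredients. To make the middle term vanish you need (i) $Y\in i\mathfrak k$ (so $\operatorname{tr}(Yx)$ is purely imaginary for $x\in\mathfrak k$) \emph{and} (ii) a pairing that is real on $S$; a generic holomorphic differential $\eta$ does not satisfy this. The paper's use of the real measure $(2\pi i)^{-1}dk$ together with the zero-mode $\chi_0$ is exactly what makes all three components vanish simultaneously. The missing idea, then, is that the correct cokernel witness is ``zero mode $\times$ fixed vector in $i\mathfrak k$'' rather than ``holomorphic differential $\times$ fixed vector.''
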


\begin{Remark} \quad
\begin{enumerate}\itemsep=0pt
\item[(a)] Conjecturally the converse to the lemma holds.

\item[(b)] In a heuristic way, this is telling us when the map
\begin{gather*}\Omega^0(S,K) \to H^0(\Sigma^*,G)\backslash\Omega^0(S,G)/H^0(\Sigma,G) \end{gather*}
is regular, i.e., has a surjective derivative.
\end{enumerate}
\end{Remark}

\begin{proof} The derivative of the ordered product map at $(g_-,k,g_+)$ is surjective if and only if the
the map
\begin{gather*}
H^0(\Sigma^*, \mathfrak g) \times \Omega^0(S,\mathfrak k) \times H^0(\Sigma,\mathfrak g)\to \Omega^0(S,\mathfrak g),\\
(X_-,x,X_+)\to X_-+x+kX_+k^{-1}
\end{gather*}
is surjective if and only if the dual map is one to one. To compute the dual map, we identify the dual of
$\Omega^0(S,\mathfrak g)$ with itself using the invariant form. The dual map is then given by
\begin{gather*}
\Omega^0(S,\mathfrak g)\to H^0(\Sigma^*, \mathfrak g) \times \Omega^0(S,\mathfrak k) \times H^0(\Sigma,\mathfrak g),\\
 \phi \to \big(\phi_-, \phi_{L\mathfrak k}, \big(k^{-1} \phi k\big)_+\big).
 \end{gather*}
This map is one to one if and only if the map (with f\/inite-dimensional domain)
\begin{gather}\label{degcond}
\{\phi_0\colon \phi_0^*=\phi_0\}\to H^0(\Sigma,\mathfrak g)\colon \phi_0 \to \big(k^{-1}(\phi_0+\phi_0^*)k\big)_+
\end{gather}
is one to one (here~$\{\phi_0\}$ is the vector space of zero modes with values in~$\mathfrak g$).
If~$k$ is degenerate as in the statement of the lemma, then given $X\in \operatorname{ker}(\operatorname{Ad}(k(z))$ for all~$z\in S^1$, and any zero
scalar zero mode~$\chi_0$ satisfying $\chi_0=\chi_0^*$, $\phi_0=\chi_0 X$ is in the kernel of~(\ref{degcond}).

Conversely suppose $\phi_0$ is in the kernel of~(\ref{degcond}). Then
\begin{gather*}
\psi:=k^{-1}\phi_0k=\psi_0+\psi_-=\psi^*=\psi_0^*+\psi_+.
\end{gather*}
Consequently $k^{-1}\phi_0k$ is another zero mode satisfying $\psi_0=\psi_0^*$.
This would seem to force~$k$ to be degenerate, but this is not entirely clear.
\end{proof}

At the point $(1,1,1)$, the derivative is the sum mapping
\begin{gather*}
H^0(\Sigma^*, \mathfrak g) \times \Omega^0(S,\mathfrak k) \times H^0(\Sigma,\mathfrak g)\to \Omega^0(S,\mathfrak g).
\end{gather*}
The image of this sum is proper and equal to
\begin{gather*}
\Omega^0(S,\mathfrak k) + H^0(\Sigma,\mathfrak g).
\end{gather*}
However at a generic point, $(g_-,k,g_+)$, because of noncommutativity, the derivative is surjective. At these points the map is locally open.

This derivative calculation strongly supports the conjecture, but
it is not clear how to parlay this into a~proof.

\pdfbookmark[1]{References}{ref}
\LastPageEnding

\end{document}